\theoremstyle{plain}
\newtheorem{theorem}{Theorem}[section]
\newtheorem{corollary}[theorem]{Corollary}
\newtheorem{proposition}[theorem]{Proposition}
\newtheorem{lemma}[theorem]{Lemma}
\theoremstyle{definition}
\newtheorem{definition}[theorem]{Definition}
\newtheorem{example}[theorem]{Example}
\newtheorem{remark}[theorem]{Remark}
\DeclareMathOperator{\Res}{Res}
\DeclareMathOperator{\red}{red}
\DeclareMathOperator{\reg}{reg}
\newcommand{\enm}[1]{\ensuremath{#1}}          %
\newcommand{\cal}[1]{\mathcal{#1}}
\newcommand{\CC}{\enm{\mathbb{C}}}
\newcommand{\NN}{\enm{\mathbb{N}}}
\newcommand{\RR}{\enm{\mathbb{R}}}
\newcommand{\QQ}{\enm{\mathbb{Q}}}
\newcommand{\ZZ}{\enm{\mathbb{Z}}}
\renewcommand{\AA}{\enm{\mathbb{A}}}
\newcommand{\PP}{\enm{\mathbb{P}}}
\newcommand{\Aa}{\enm{\cal{A}}}
\newcommand{\Bb}{\enm{\cal{B}}}
\newcommand{\Ii}{\enm{\cal{I}}}
\newcommand{\Oo}{\enm{\cal{O}}}
\newcommand{\Rr}{\enm{\cal{R}}}
\newcommand{\Ss}{\enm{\cal{S}}}
\newcommand{\Ree}{\mathrm{Re}}
\newcommand{\Imm}{\mathrm{Im}}
\renewcommand{\phi}{\varphi}
\renewcommand{\theta}{\vartheta}
\renewcommand{\epsilon}{\varepsilon}
\renewcommand{\to}[1][]{\xrightarrow{\ #1\ }}
\newcommand{\old}[1]{}
\date{}
\title{The strength for line bundles}
\author{Edoardo Ballico and Emanuele Ventura}
\address{Universit\`a di Trento, 38123 Povo (TN), Italy}
\email{edoardo.ballico@unitn.it}
\address{Universit\"{a}t Bern, Mathematisches Institut, Sidlerstrasse 5, 3012 Bern, Switzerland}
\email{emanueleventura.sw@gmail.com, emanuele.ventura@math.unibe.ch}
\keywords{Strength; Line bundles; Complete intersections.}
\subjclass[2010]{14C20, 14N05, 14M10.}
\begin{document}

\maketitle

\begin{abstract}
We introduce the strength for sections of a line bundle on an algebraic variety. This generalizes the strength of homogeneous polynomials that has been recently introduced to resolve Stillman's conjecture, an important problem in commutative algebra. We establish the first properties of this notion and give some tool to obtain upper bounds on the strength in this framework. Moreover, we show some results on the usual strength such as the reducibility of the set of strength two homogeneous polynomials. 
\end{abstract}

\section{Introduction}

Recently, the notion of {\it strength} of polynomials has been introduced by Ananyan and Hochster in their groundbreaking work \cite{AH}, in order to resolve a longstanding conjecture in commutative algebra: Stillman's conjecture. 

In \cite{ESS}, Erman, Sam, and Snowden extended this definition to the {\it collective strength} of a set of polynomials $\lbrace f_i\rbrace$, as a way of measuring how much these $f_i$ behave alike independent variables. The principle that infinite collective strength is equivalent to the collection being a regular sequence has been successfully exploited by these authors, resulting into another proof of Stillman's conjecture, where they employ a very interesting ultraproduct construction.

In \cite{BDE}, Bik, Draisma, and Eggermont showed that strength of tensors has a remarkable property: 
for any proper Zariski closed subset $X(V)$, that is functorial in the underlying vector space $V$, the strength of every element in $X$ is uniformly bounded (independently of the underlying vector space $V$). This generalizes a result of Kazhdan and Ziegler \cite{KZ}, where, using techniques from additive combinatorics, they show that polynomials with directional derivatives of bounded strength have bounded strength themselves. More recently, Kazhdan and Ziegler studied properties of vanishing loci of high strength polynomials over finite fields \cite{KZ20}. Strength has also been a tool to derive the equivariant topological noetherianity result in \cite{DES}. 

In the setting of tensors, a concept similar to strength is the {\it slice rank} \cite{SawinTao}, that has been successfully employed in additive combinatorics: Ellenberg and Gijswijt \cite{EG} showed that a capset $A\subset \mathbb F_3^n$, i.e. a set not containing any arithmetic progression of length tree, has size $|A|\leq O(2.756^n)$. In fact, they obtain a more general result for such sets in any vector space $\mathbb F_q^n$. Their proof relies on an ingenious use of the slice rank of a specific tensor, encoding the size of any capset.

Inspired by all these deep results and applications related to strength, we introduce a strength in the context of line bundles on algebraic varieties, mimicking the original definition for homogeneous polynomials and tensors. We believe this could have as many interesting applications as strength of polynomials has been proven to possess. \\

\noindent {\bf Contributions and structure of the paper}. We now discuss more in detail the content of this paper. In \S\ref{strengthlinebundles}, given a pair $(X,L)$ of a complex projective variety $X$ and a line bundle $L$ on it, we introduce the strength for sections of $L$ (Definition \ref{defstrength}); this generalizes the strength of homogeneous polynomials and tensors, as explained in Examples \ref{strengthpoly} and \ref{strengthtensors} respectively. 

In Example \ref{weak}, we discuss a weaker version of strength for line bundles, which is then sensible to the difference between Weil and Cartier divisors. 
In Examples \ref{a4}, \ref{c3}, and \ref{d25}, we highlight several situations where we determine the strength of (sections corresponding to) Cartier divisors. 
In Proposition \ref{c1}, we somewhat generalize the case when $X$ is a rational normal curve, corresponding to the strength of binary forms, pointed out in Remark \ref{a1}. Theorem \ref{x3} is a tool to give upper bounds on the strength of a section, expressed in terms of vanishings of cohomology groups. The main ingredient behind it is the use of the Koszul complex. 

In \S\ref{mstrengthsec}, we introduce another central concept to our work, inspired by Lemma \ref{a7}. This is the {\it restricted strength} or $R$-strength (Definition \ref{mstrength}). In the restricted setting, we take into account only one line bundle in the original definition of strength; see Remark \ref{specialcaseofstrength}. 
The $R$-strength is a generalization of slice rank for homogeneous polynomials, as observed in Remark \ref{genslicerank}. 
Theorem \ref{e1} and Example \ref{111} highlight the first properties of this version of strength. In Remark \ref{d5}, we point out the connection between Segre varieties and loci of sections of $R$-strength one.  

In \S\ref{reals}, we discuss the validity of the some of the results proven in the previous sections over the field $\RR$. Theorem \ref{r2} shows a sufficient condition to be able to give a bound on the real strength in terms of the complex strength of a section. 

In \S\ref{specialvarieties}, we focus on specific pairs $(X,L)$. We explain how complete intersections have a role in the context of the strength of a homogeneous polynomial: in Remark \ref{cc2} we deal with complete intersections in $\PP^2$, leading to Theorem \ref{setdecomp}. Here we determine
the dimension of the set of strength decompositions of a given irreducible $u\in H^0(\Oo_{\PP^2}(d))$, that can be represented as $u = u_i f_{d-i} + v_j g_{d-j}$ where $\lbrace u_i, v_j\rbrace$ is a regular sequence of degrees $i$ and $j$. 

In \S\ref{strengthcompleteintersections}, we continue discussing more closely the case of strength for homogeneous polynomials and some relation to complete intersections. We define the locus $E(i_1,\ldots, i_k;n)$ in Definition \ref{defE}, and $F(i_1,\ldots, i_k; n)$ in Definition \ref{defF}. The first is the set of all $Y\in |\Oo_{\PP^n}(d)|$ (degree $d$ hypersurfaces) whose corresponding section (up to scaling) $f\in H^0(\Oo_{\PP^n}(d))$ admits a strength decomposition (not necessarily minimal) of the form 
\[
f =\sum _{h=1}^{k} f_hg_h, \mbox{ where } \deg(f_h) =i_h\leq \lfloor d/2\rfloor.
\]
The second locus is described by requiring the collection $\lbrace f_h\rbrace_h$ to be a regular sequence, i.e. their vanishing set $\lbrace f_1 = \cdots = f_h = 0\rbrace$ is a complete intersection. 

The loci  $F(i_1,\ldots, i_k; n)$ are irreducible (Remark \ref{ooo1}), albeit not generally closed. The latter statement is shown by constructing suitable flat limits of 
complete intersections of two quadrics in $\PP^3$: there exists a sequence of surfaces in $F(2,2;3)$, whose limit does {\it not} belong to $F(2,2;3)$ itself. 
This is achieved in Theorem \ref{cor1} and Theorem \ref{cor2}. To the best of our knowledge, this is a new and interesting result on its own right. 

Let $\mathcal{S}_k(d,n)$ be the set of all $Y\in |\Oo_{\PP^n}(d)|$ whose corresponding section $f\in H^0(\Oo_{\PP^n}(d))$ has strength $k$. Theorem \ref{d31} proves that $\mathcal{S}_2(d,n)$ is reducible. To establish this result, we make a detour to describe the {\it effective cone} of divisors of certain irreducible smooth surfaces (Proposition \ref{u100}).  
 
Finally, Example \ref{newd1} shows the existence of $u\in H^0(\Oo_{\PP^3}(d))$ such that $\mathrm{s}(u) < \mathrm{s}^R(u)$ for all line bundles $R$ on projective space, using a known description of the Noether-Lefschetz locus in $|\Oo_{\PP^3}(d)|$, briefly recalled in Remark \ref{newd1}. 
\vspace{3mm}

\begin{small}
\noindent {\bf Acknowledgements.} The first author is partially supported by MIUR and GNSAGA of INdAM (Italy). The second author is supported 
by the grant NWO Den Haag no. 38-573 of Jan Draisma. We thank A. Bik and A. Oneto for useful discussions and for kindly sharing with us their upcoming work \cite{BO}. 
\end{small}

\section{Strength of sections of line bundles}\label{strengthlinebundles}

Let $(X,L)$ be a pair consisting of a complex integral projective variety $X\subset \PP^N$ and a line bundle (i.e. a rank-one locally free sheaf) $L$ on $X$ with $H^0(L)> 0$, i.e. the vector space of global sections of $L$ on $X$ is nonzero. The set of nonzero sections is denoted $H^0(L)^{\times}$ and $|L|$ denotes
the linear system $\PP(H^0(L)^{\vee})$. Here is our central definition: 

\begin{definition}\label{defstrength}
Let $(X,L)$ be a pair as above, and let $u\in H^0(L)^{\times}$ be a section. The \emph{strength} $\mathrm{s}_{X}(u)$ of $u$
is the minimal number $r\geq 1$ such that there exist $2r$ {\it non-trivial} line bundles $L_i, R_i$ (with $1\le i\le r$), isomorphisms $\phi _i: L_i\otimes R_i \to L$ of line bundles, and sections $x_i\in H^0(L_i)$, $y_i\in H^0(R_i)$ such that 
\begin{equation}\label{strengthdec}
u = \sum _{i=1}^{r} \phi^0 _i(\mu_{L_i, R_i}(x_i\otimes y_i)), 
\end{equation}
where
\[
\mu_{L_i, R_i}: H^0(L_i)\otimes H^0(R_i) \rightarrow H^0(L_i\otimes R_i)
\]
is the multiplication of sections and $\phi^0_i$ denotes the isomorphism on global sections $\phi^0_i: H^0(L_i\otimes R_i)\cong H^0(L)$ induced by $\phi_i$. 
We say that  $\mathrm{s}_{X}(u) = +\infty$ if there exist no such $L_i, R_i,\phi _i$, and sections $x_i, y_i$ for any positive integer $r$. 
(We may drop the subscript from $\mathrm{s}_X$, once $X$ is specified.) 
\end{definition}

\begin{example}\label{strengthpoly}

The definition of strength for homogeneous polynomials is a special case of Definition \ref{defstrength}. 
Let $X = \PP(V)$ and $L = \Oo_{\PP(V)}(d)$. For the pair $(\PP^n, \Oo_{\PP(V)}(d))$, a nonzero section $f\in H^0(L)\cong S^d V$ is then identified with a nonzero homogeneous polynomial.  The strength of $f$ is defined to be the minimal $r$ such that
\[
f = \sum_{i=1}^r t_i s_i,
\]
where $t_i\in S^{e_i} V\cong H^0(\Oo_{\PP(V)}(e_i)), s_i \in S^{d-e_i} V\cong H^0(\Oo_{\PP(V)}(d-e_i))$. Therefore, in this case, the morphisms $\phi_i$ are the natural isomorphisms 
\[
\phi_i: \Oo_{\PP(V)}(e_i)\otimes \Oo_{\PP(V)}(d-e_i)\cong \Oo_{\PP(V)}(d) = L. 
\]
\end{example}

\begin{remark}\label{a1}
Over an algebraically closed field, every binary form is a product of linear forms. Thus
$\mathrm{s}_X(u)=1$ for all integers $d\ge 2$ and all $u\in H^0(\Oo _{\PP^1}(d))^{\times}$. For $d=1$, one has infinite strength,
as we prescribed that no $R_i$ or $L_i$ can be trivial. Similarly, for all $n\ge 2$, any nonzero element of $H^0(\Oo _{\PP^n}(1))$, i.e. any nonzero linear form, has infinite strength.\end{remark}

\begin{example}\label{strengthtensors}
A $d$-way tensor may be regarded as a global section of the line bundle $L = \Oo_{\PP^{n_1}}(1)\boxtimes \cdots \boxtimes \Oo_{\PP^{n_d}}(1)$ on the Segre
variety $X = \PP^{n_1}\times \cdots \times \PP^{n_d}$. 

The strength of a tensor $t$ (in the sense of \cite[1.5]{BDE}) coincides with the strength of $t$ viewed as a section of $L$: every possible choice for a line bundle $R_i$ in 
Definition \ref{defstrength} has the form 
$R_i \cong \boxtimes_{j\in J_i} \Oo_{\PP^{n_{j}}}(1) \boxtimes_{k\in [d]\setminus J_i} \Oo_{\PP^{n_{k}}}$, for some $J_i\subset [d]=\lbrace 1,\ldots, d\rbrace$. The {\it slice rank} of $t$ \cite{SawinTao} is lower bounded by its strength. 
\end{example}

Now, we briefly (and informally) recall Weil and Cartier divisors, referring to \cite[Chapter II.6]{h} or \cite[Chapter 1]{laz1} for details and precise results. On an integral scheme $X$ (e.g. an irreducible and reduced algebraic variety), a {\it Weil divisor} is a formal sum of codimension-one integral subschemes of $X$, called prime divisors. 

A {Cartier divisor} is a Weil divisor that can be described locally (in an open cover of $X$) by suitable invertible sections $f_i\in K(X)$. For a locally factorial scheme, Weil and Cartier divisors are equivalent \cite[Proposition II.6.11]{h}; for instance, this holds when $X$ is smooth. In this case, both Weil and Cartier divisors form the {\it class group} $\mathrm{Cl}(X)$ (modulo linear equivalence) of $X$, which is isomorphic to $\mathrm{Pic}(X)$, the group of invertible sheaves on $X$ \cite[Corollary II.6.14]{h}, i.e. line bundles or rank-one locally free sheaves. For non-locally factorial schemes, Cartier divisors form the {\it Cartier class group} $\mathrm{CaCl}(X)$ (modulo linear equivalence) of $X$, which is still isomorphic to $\mathrm{Pic}(X)$. However, Weil divisors are generally different from Cartier divisors even on normal varieties. \\

\noindent {\bf Notation}. In the rest, we use additive notation when we work with divisors $D\subset X$, and multiplicative notation for sheaves $\Oo_X(D)$.

\begin{example}\label{weak}
For a weaker version of strength for sections (called {\it weak strength} in this example), one may allow in Definition \ref{defstrength} the $R_i$ to be non-trivial rank-one {\it reflexive} sheaves. A reflexive sheaf is torsion-free but not necessarily locally free (so it is not necessarily a line bundle); see \cite{Schwede} for a nice introduction. 
Given a Weil divisor $D$, the sheaf $\Oo_X(D)$ is rank-one reflexive. 

Let $X$ be an integral normal projective variety. Let $D$ be an effective Weil divisor, $D\ne 0$. Thus the sheaf $\Oo_X(D)$ is a rank-one reflexive sheaf on $X$. 

A section corresponding to the divisor $D' = D + D = 2D$, which is a section of the reflexive sheaf $\Oo_X(2D)$ has weak strength one. 

However, $2D$ may still be a Cartier divisor, without $D$ being a Cartier divisor: the classical example is when $X\subset \PP^3$ is an irreducible quadric cone and  $D$ a line passing through the vertex \cite[Example 6.11.3]{h}. The divisor $2D$ is Cartier as $2D\in |\Oo _X(1)|$, but it is not the sum of two (or more) effective Cartier divisors. Let $u$ be a section of $\Oo_X(2D)$: the weak strength of $u$ is one, while $\mathrm{s}_X(u) =+\infty$, according to Definition \ref{defstrength}. 
\end{example}

\begin{example}\label{a2}
Let $(X,L)$ be a pair.  If $D\subset X$ is a sum of two Cartier divisors, then $\mathrm{s}_X(u) =1$ for any nonzero $u\in H^0(L)^{\times}$. Otherwise, $s(u) =+\infty$. 

Let $E\subset X$ be the base divisor of the linear system $|L|$, i.e. any divisor $D\in |L|$ contains $E$. 
If $E\ne 0$ and is Cartier, then $L_1 = L(-E)$ is a line bundle. Moreover, $R_1 = \Oo _X(E)$ is also a line bundle because $E$ is Cartier. Thus we have a natural isomorphism: 
\[
\phi_1: L(-E)\otimes \Oo_X(E)\rightarrow L. 
\]
So we see that $\mathrm{s}_X(u)=1$ for all $u\in H^0(L)^{\times}$. 
\end{example}

\begin{example}\label{a3}
Let $L$ be a line bundle such that its linear system $|L|$ has no base divisors. Fix a section $u\in H^0(L)^{\times}$ and let $D = \{u=0\}$ be the corresponding Cartier divisor of $X$. When $D = \sum _{i=1}^{s} a_i D_i$ with $a_i\geq 0$, $\sum_i a_i \ge 2$, and each $D_i$ is a Cartier divisor of $X$, then $\mathrm{s}_X(u)=1$, because we may use the line bundles $R_1:= \Oo _X(D_1)$ and $L_1:= \Oo _X(D-D_1)$, and once again with $\phi_1: L_1\otimes R_1\rightarrow L$ the natural isomorphism. 
\end{example}

A systematic way of constructing sections of infinite strength is as follows: 

\begin{remark}\label{d24}
Let $X$ be an integral projective variety of dimension $n > 0$. 

Fix any ample line bundle $H$ on $X$. For any effective Cartier divisor $D$ on $X$, the $H$-degree is the positive integer (intersection number) $D\cdot H^{n-1} >0$. This integer is invariant under linear (or numerical) equivalence. Let $D$ be any Cartier divisor of minimal $H$-degree, $L = \Oo _X(D)$ and let $u\in H^0(L)^{\times}$ be a corresponding section. 

The minimality of the $H$-degree of $D$ shows that $D \ne D_1+D_2$, where $D_i$ are non-zero Cartier divisors on $X$. Thus $\mathrm{s}_X(u) =+\infty$ for
all $u\in H^0(L)^{\times}$. 
\end{remark}

Another example which shows that in general there might be plenty of infinite strength sections: 

\begin{example}
Fix integers $n\ge 2$ and $t>0$. Let $B\subset \PP^n$ be a smooth hypersurface of even degree $2t$. Let $f:
X\to \PP^n$ be a double covering of $X$ branched over $B$. For any $a\in \ZZ$, define $\Oo _X(a) = f^\ast (\Oo _{\PP^n}(a))$.
Then $X$ is a smooth and connected projective variety and $\mathrm{Pic}(X) = \ZZ \Oo _X(1)$ \cite[7.1.20]{laz2}. Since $X$ is a double 
cover and $B$ has degree $2t$, one has $f_\ast (\Oo_X)\cong \Oo _{\PP^n}\oplus \Oo _{\PP^n}(-t)$ (this can be checked locally on affine schemes). 

By the projection formula \cite[Exercise II.5.1(d)]{h}, we have $f_\ast (f^\ast (\Oo _{\PP^n}(d)) \cong \Oo
_{\PP^n}(d)\otimes f_{\ast}(\Oo_X)\cong \Oo_{\PP^n}(d)\oplus \Oo _{\PP^n}(d-t)$. By definition of direct image, one has $H^0(\Oo _X(d)) = H^0(X, f^{\ast}(\Oo_{\PP^n}(d))) = H^0(\PP^n, f_{\ast}f^{\ast}\Oo_{\PP^n}(d))$. 
Therefore, $H^0(\Oo _X(d)) = f^{\ast} H^0(\Oo _{\PP^n}(d))$ for all $d<t$, whereas $h^0(\Oo
_X(t)) = \binom{n+t}{n}+1$. Thus a general section $u\in H^0(\Oo _X(t))$ has strength $\mathrm{s}_X(u) = +\infty$. 
\end{example}

The following situation shows that the picture may be richer for more interesting divisors on singular curves: 

\begin{example}\label{a4}
Let $X\subset \PP^2$ be an integral plane cubic with one singular point $o$, which is either an ordinary node or an ordinary cusp. In any case, its arithmetic genus is $p_a(X)=1$. The Riemann-Roch theorem for rank one torsion-free sheaves on $X$ shows that, for any scheme $S\subset X\setminus \{o\}$ with $\deg (S)=2$, the sheaf $L = \Oo _X(S)$ is a glabally generated line bundle (or equivalently, base-point free) and $h^0(L)=2$. Thus it comes with an associated degree two morphism $\psi: X\to \PP^1$. The degree two scheme $Z: = \psi^{-1}(\psi(o))\in |L|$ is Cartier.  We now show that its strength satisfies $\mathrm{s}_X(Z)=2$.

For any $q\in \PP^1$, the scheme $\psi^{-1}(q)$ is a degree two zero-dimensional scheme and $L\cong \Oo_X(\psi^{-1}(q))$.
For a general $p\in \PP^1$, the scheme $\psi^{-1}(p)$ is the union of two distinct points $z_1, z_2 \in X\setminus \{o\}$ and hence we have infinitely many divisors $D = z_1+ z_2\in |L|$. Since each $z_i$ is a smooth point of $X$, $D$ is an effective Cartier divisor of $X$, and therefore the section $u\in H^0(L)$ associated to $D$ is of the form $u=x_1\otimes x_2$ with $x_1\in H^0(\Oo_X(z_1))$ and $x_2\in H^0(\Oo_X(z_2))$. Thus $u$ has strength one. For the vector space $H^0(L)$ there is a basis formed by strength one sections. Since $h^0(L) =2$, we derive $\mathrm{s}_X(v)\le 2$, for all $v\in H^0(L)^{\times}$. 

Thus $\mathrm{s}_X(Z)\leq 2$ as well. Assume $\mathrm{s}_X(Z)=1$ and take line bundles $L_1$ and $R_1$, with sections $x\in H^0(L_1)$, $y\in H^0(R_1)$ such that $L_1\otimes R_1\cong L$ and $x\otimes y$ is a section whose zero-locus is $Z$. The sections $x$ and  $y$ are associated to degree one Cartier divisors $D_x$  and $D_y$, respectively. Hence both are associated to a smooth point of $X$, so $D_x+D_y= \psi^{-1}(o_1)$ for some $o_1\in \PP^1\setminus \{\psi(o)\}$, which is a contradiction.
\end{example}

\begin{example}\label{c3}
Let $X\subset \PP^2$ be a singular integral cubic and let $o$ be its singular point as in Example \ref{a4}. 
\begin{enumerate}

\item[(i)] Assume $X$ is cuspidal. Let $\ell\subset \PP^2$ be the unique line (i.e. the tangent) such that the scheme-theoretic intersection $D = \ell\cap X$ is
the divisor $3o$ of $X$; let $u$ be a corresponding section of $D$. 

Since $o$ is not a Cartier divisor of $X$, one has $\mathrm{s}_X(u)\ne 1$. Call $D_2$ the unique degree two subscheme of $D$ and $u_2$ a corresponding section.

\item[(ii)] Assume that $X$ is nodal. There are two lines $\ell_1, \ell_2$ (i.e. the tangents) such that the scheme-theoretic intersection
$D' = \ell_i\cap X$  is the divisor $3o$ of $X$; let $u'$ be a corresponding section. Since $o$ is not Cartier, we have $\mathrm{s}_X(u')\ne 1$. Call $D'_2 \subset D'$ the unique degree $2$ subscheme of $D'$, and similarly $u'_2$ a section. 
\end{enumerate}

In both (i) and (ii), $D_2$ and $D'_2$ cannot be written as a sum of Cartier divisors, as the only such are
smooth points. Therefore $\mathrm{s}_X(u_2) = \mathrm{s}_X(u'_2)= +\infty$. Moreover, $D_2$ and $D'_2$ are degree two Weil (but not Cartier) divisors that are {\it limits} of Cartier divisors of the form $p+q$, with $p, q \in X\setminus \{o\}$, of strength one. 
\end{example}

The next observation slightly generalizes Example \ref{c3}: 

\begin{remark}\label{c4}
Let $X\subset \PP^N$ be an integral and non-degenerate projective curve, and fix an integer $d\ge 2$. Let $A_{d}$ be the set of all degree $d$ effective Weil divisors which are a flat limit of a family of effective degree $d$ divisors contained in the smooth locus $X_{\reg}$ of $X$. Let $D\subset X_{\reg}$ be an effective degree $d$ divisor. Thus $D = p_1+\cdots +p_{d}$ with $p_i\in X_{\reg}$ (here we allow the case $p_i=p_j$ for some $i\ne j$). Since $p_i\in X_{\reg}$, each $\Oo _X(p_i)$ is a line bundle. Since $d\ge 2$, we have $\mathrm{s}_X(D) =1$. Every $D'\in A_{d}$ is a limit of Cartier divisors of strength one. It is generally difficult to describe the elements in $A_d$: it is the set of all smoothable degree $d$ zero-dimensional schemes supported on $X$. The set $A_2$ contains (and sometimes it is equal to) the limit of tangent vectors to smooth points of $X$, as for $D_2, D_2'$ in Example \ref{c3}.
\end{remark}

\begin{example}\label{d25}
Let $X$ be an integral projective variety with $\dim X \ge 2$. We show the existence of $L$ and $u$ such that 
$\mathrm{s}_X(u) =2$. We start by an observation: 

{\it Observation.} Let $D\ne 0$, be an effective Cartier divisor of $X$. Define $L = \Oo _X(D)$ and let $u\in
H^0(L)^{\times}$ be a section associated to $D\in |L|$. If $D$ is an integral scheme, then 
$\mathrm{s}_X(u) \ne 1$.

For a vector space $V\subset H^0(L)$, denote $|V|\subset |L|$ the corresponding linear system. Let $L$ be a line bundle such that $h^0(L)\ge 2$, and assume the existence of a two-dimensional linear subspace $V\subseteq H^0(L)$ such that a general $D\in |V|$ is integral, and that there are at least two distinct $D_1, D_2\in |V|$, each of them being the sum of effective Cartier divisors. Let $u$, $u_1$, and $u_2$ be sections corresponding to $D$, $D_1$, and $D_2$, respectively. By definition,  $\mathrm{s}_X(u_1) = \mathrm{s}_X(u_2) = 1$. Since $\dim V=2$, $u$ is a linear combination of $u_1$ and $u_2$. Thus $\mathrm{s}_X(u) \le 2$. By the {\it Observation}, one has $\mathrm{s}_X(u) \ne 1$, and so $\mathrm{s}_X(u) = 2$.

We show the existence of $L$, $V$, $D$, $D_1$, and $D_2$ as above on $X$. Fix any very ample line bundle $M$ on $X$ and let $L = M^{\otimes 2}$. A general $A\in |M|$ is integral by Bertini's theorem and hence $|L|$ is spanned by divisors $A_1+A_2$, with each $A_i$ integral. 

Let $W\subseteq H^0(L)$ be the image of the multiplication map $H^0(M)\otimes H^0(M)\to H^0(L)$. By Bertini's theorem, a general $G\in |W|$ is integral. Let $U \subseteq W$ be the set of all $D_i\in W$ with $D_i = A_i+B_i$, where $A_i,B_i\in |M|$ with $A_i, B_i$ integral divisors. Since a general element of $|M|$ is integral, the definition of $W$ shows that $U$ spans $W$. Each section in the linear span of two elements of $U$ has strength $\le 2$. If one of them has strength $2$, we are done. If not, then all such $u\in U$ have strength $1$. Therefore, we obtain that all $u\in W\setminus \{0\}$ in the linear span of three elements of $U$ have strength $\le 2$. If none of them has strength $2$, we keep continuing. After finitely many steps, we have to find some $u\in W$ with strength exactly $2$, as $\dim W$ is finite and we may apply the {\it Observation} above to a general divisor $D\in |W|$.
\end{example}

With the aim of generalizing Remark \ref{a1}, we introduce the following: 

\begin{definition}\label{c0=}
Let $Y$ be an integral projective variety and $L$ be a very ample line bundle on it. Let $X'\subset \PP^M$ be the image of $Y$ by the
complete linear system $|L|$. For any linear subspace $V\subset \PP^M$, let $\pi _V: \PP^M\setminus V \to \PP^N$, $N= M-\dim V
-1$, denote the linear projection away from $V$. Assume $V\cap X' =\emptyset$ and that the morphism $\pi _{V|X'}$ is an embedding, 
and let $X = \pi _V(X')$. 

Any $q\in \PP^M$ may be seen as a section of $L$, so the strength $\mathrm{s}_{X'}(q)$ is defined 
as the strength of that section. Moreover, for any $p\in \PP^N$, we may define the $X$-strength $\mathrm{s}_X(p)$ of $p$ 
as follows: 
\[
\mathrm{s}_X(p) = \min\lbrace \mathrm{s}_{X'}(q') \ | \ q'\in \PP^M\setminus V, \ \pi _V(q')= p \rbrace.
\]
\end{definition}

Equipped with this definition, the next result may be seen as a more general version of Remark \ref{a1}. 

\begin{proposition}\label{c1}
Let $X\subset \PP^N$, $N\ge 2$, be a smooth and irreducible non-degenerate curve. Then $\mathrm{s}_X(q) =1$ for all $q\in \PP^N$.
\end{proposition}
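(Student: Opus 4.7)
The plan is to use Definition \ref{c0=} to reduce the statement to the claim that every non-zero section of $L:=\Oo_X(1)$ has strength one on the smooth curve $X$, and then to produce such a decomposition by splitting off a single point of the associated effective divisor.

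With $M=h^0(L)-1$, let $X'\subset\PP^M$ be the image of $X$ under the embedding defined by the complete linear system $|L|$; the given embedding $X\subset\PP^N$ is then the linear projection of $X'$ from some subspace $V\subset\PP^M$ disjoint from $X'$. Definition \ref{c0=} yields
\[
\mathrm{s}_X(q)=\min\{\mathrm{s}_{X'}(q'):q'\in\pi_V^{-1}(q)\},
\]
so it is enough to prove $\mathrm{s}_{X'}(u)=1$ for every non-zero section $u\in H^0(L)$. For this, I would first observe that non-degeneracy of $X\subset\PP^N$ gives $h^0(L)\ge N+1$, while the standard bound $h^0(L)\le \deg L+1$ on a smooth projective curve (obtained by peeling off one point at a time from an effective divisor) forces $d:=\deg L\ge N\ge 2$.

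Given a non-zero section $u\in H^0(L)$, its divisor $D=\{u=0\}$ is effective of degree $d\ge 2$. I would pick any $x\in\mathrm{Supp}(D)$ and write $D=D_1+D_2$ with $D_1:=x$ and $D_2:=D-x$; both $D_1$ and $D_2$ are non-zero effective Cartier divisors on the smooth curve $X$, of positive degree. The line bundles $L_i:=\Oo_X(D_i)$ are thus non-trivial, and the canonical isomorphism $\phi:L_1\otimes L_2\xrightarrow{\sim}L$ identifies $s_1\otimes s_2$ with a non-zero scalar multiple of $u$ (where $s_i\in H^0(L_i)$ cuts out $D_i$); this exhibits a strength-one decomposition, so $\mathrm{s}_{X'}(u)=1$.

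The only non-routine input is the inequality $h^0(L)\le \deg L+1$, which is standard on smooth curves. I do not anticipate a real obstacle: the proposition essentially reduces to the elementary fact that, on a smooth curve, any effective divisor of degree at least two splits as a sum of two non-zero effective Cartier divisors.
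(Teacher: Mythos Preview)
Your proposal is correct and follows essentially the same path as the paper's proof: reduce to the linearly normal model via Definition \ref{c0=}, observe that non-degeneracy forces $\deg L\ge N\ge 2$, and then split the effective divisor $D=\{u=0\}$ as $p_1+(D-p_1)$ for any point $p_1$ in its support, using smoothness of $X$ to ensure both summands are Cartier and both line bundles are non-trivial. The only cosmetic difference is that the paper phrases the degree bound as $\deg(X)\ge N$ for a non-degenerate curve, while you invoke the equivalent inequality $h^0(L)\le \deg L+1$.
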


\begin{proof}
By Definition \ref{c0=}, it is sufficient to prove the statement when $X$ is linearly normal. Let $D\in |\Oo _X(1)|$ be the
effective divisor associated to $q$. Write $D = \sum a_ip_i$ for some $p_i\in X$, with integers $a_i> 0$
and $\sum_i a_i =\deg (X)$. Since $N\ge 2$ and $X$ is non-degenerate, we have $\deg (X)\ge 2$. Since $X$ is smooth, $\Oo _{X}(p_1)$ and $\Oo_X(1)(-p_1)$ are line bundles. Since $\sum_i a_i=\deg(X) \ge 2$, these two line bundles are non-trivial. The divisor $D$ is an element of $|\Oo _X(1)|$
associated to a rank-one element in the image of the multiplication map $H^0(\Oo_X(p_1))\otimes H^0(\Oo _X(1)(-p_1))$, which gives
the statement. 
\end{proof}

Likewise Remark \ref{a1} our version of it is sensitive to fields extensions: 

\begin{example}\label{fieldext}
Let $X\subset \PP^2$ be the smooth conic $\{x^2+y^2+z^2 =0\}$ seen as a smooth projective curve defined over $\RR$. Since $X(\RR)=\emptyset$, degree one line bundles are not defined over $\RR$. By construction $\Oo _X(1)\cong \Oo_{\PP^1}(2)$ has degree two and is defined over $\RR$. Since the degree one line bundles on $X$ are not defined over $\RR$, for all the sections $u\in H^0(\Oo _X(1)))^{\times}$, their strength defined over $\RR$ is $\mathrm{s}_{X(\RR)}(u) = +\infty$. Note that $X$ is geometrically irreducible, i.e. $X(\CC)$ is irreducible. By Proposition \ref{c1}, we have $\mathrm{s}_{X(\CC)}(u) =1$ for all $u\in H^0(X(\CC),\Oo _{X(\CC)}(1))^{\times}$. See \S\ref{reals} for more results and examples over the reals. 
\end{example}

Since strength for line bundles is defined through multiplication of sections,  the following lemma is very useful to derive some information about it, and heavily inspired the introduction of $R$-strength studied in \S\ref{mstrength}. The dual of a line bundle $R$ is denoted $R^{-1}$. 

\begin{lemma}\label{a7}
Let $(X,L)$ be a fixed pair. Let $R \ncong L$ be a non-trivial globally generated line bundle with $d = h^0(R)>0$ such that
multiplication map 
\[
H^0(R)\otimes  H^0(L\otimes R^{-1})\longrightarrow H^0(L)
\] 
is surjective. Then any non-zero element of $H^0(L)$ has strength at most $d$. 
\end{lemma}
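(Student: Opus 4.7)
The plan is to reduce directly to the definition of strength by using a basis of $H^0(R)$. Let $u\in H^0(L)^{\times}$. First I would fix a basis $x_1,\ldots,x_d$ of the finite-dimensional vector space $H^0(R)$. Since the multiplication map
\[
\mu\colon H^0(R)\otimes H^0(L\otimes R^{-1})\longrightarrow H^0(L)
\]
is surjective by hypothesis, $u$ belongs to the image of $\mu$; writing a preimage as a finite sum of simple tensors and re-expanding each first factor in the basis $x_1,\ldots,x_d$, I obtain elements $y_1,\ldots,y_d\in H^0(L\otimes R^{-1})$ such that
\[
u=\sum_{j=1}^{d}\mu(x_j\otimes y_j).
\]

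Next I would recast each summand as a strength-one piece in the sense of Definition \ref{defstrength}. Set $L_j:=L\otimes R^{-1}$ and $R_j:=R$ for every $j=1,\ldots,d$, together with the canonical isomorphism
\[
\phi_j\colon L_j\otimes R_j=(L\otimes R^{-1})\otimes R\xrightarrow{\ \sim\ } L.
\]
Under $\phi_j^0$, the tensor $y_j\otimes x_j$ is sent to $\mu(x_j\otimes y_j)$, so the displayed formula above is exactly a decomposition of the shape \eqref{strengthdec} with $d$ summands.

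Finally, I would verify the non-triviality conditions required by Definition \ref{defstrength}. The line bundle $R_j=R$ is non-trivial by hypothesis. For $L_j=L\otimes R^{-1}$, non-triviality is equivalent to $L\not\cong R$, which is also part of the hypothesis. Summands in which $y_j=0$ contribute nothing and may simply be discarded, which only improves the bound. Hence $\mathrm{s}_X(u)\le d$, as desired.

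I do not expect any serious obstacle: the argument is essentially bookkeeping, and the role of each hypothesis is transparent—$h^0(R)=d$ controls the length of the decomposition, surjectivity of $\mu$ produces it, and the assumptions $R\not\cong L$ and $R$ non-trivial ensure the admissibility of the line bundles $L_j$ and $R_j$ in the strength definition. The global generation of $R$ is not actually invoked in the argument; it is a natural condition that often guarantees the required surjectivity of the multiplication map in practice.
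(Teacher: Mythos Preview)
Your argument is correct and is precisely the intended one: the paper states Lemma~\ref{a7} without proof because the result is immediate from the definition, and your write-up spells out exactly that immediate verification. Your closing observation that global generation of $R$ is not actually used is also accurate.
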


\begin{example}\label{a8}
Let $X$ be a scheme such that $\mathrm{Pic}(X)\cong \ZZ H$ and take a positive power $M$ of $H$. Assume $M$ is very ample and projectively normal. Then, for any integer $d\ge 2$, any non-zero section of $H^0(M^{\otimes d})$ has strength $\leq h^0(M)$. 
This situation covers for instance Grassmannians, smooth complete intersections of dimension $\ge 3$, and very general surfaces of degree $\ge 4$.

See also \cite[\S 1.8]{laz1}, for related results in the setting of (regular) coherent sheaves on projective space. 
\end{example}

Let $X$ be an $n$-dimensional integral projective variety. We show next that, for all sufficiently positive line bundles $L$
on $X$, one has $\mathrm{s}_X(u) \le n+1$ for all $u\in H^0(L)^{\times}$. The example $(\PP^n,\Oo _{\PP^n}(1))$ shows that we need
to give some restriction on $L$. Recall that $M^{-k}$ denotes the dual line bundle of $M^{\otimes k}$. 

\begin{theorem}\label{x3}
Let $M$ be a globally generated non-trivial line bundle on $X$ and define $m = \min \{n+1,h^0(M)\}$. Let $L$ be a line bundle on $X$ such that the cohomology groups $H^k\left(M^{-k}\otimes L\right) = H^k\left(M^{-k-1}\otimes L\right) = 0$ for all $1\leq k\leq m-1$ and $H^m\left(M^{-m}\otimes L\right) = 0$. Then $\mathrm{s}_X(u)\le m$ for all $u\in H^0(L)^{\times}$. 
\end{theorem}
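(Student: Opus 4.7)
The plan is to reduce the statement to the surjectivity of a multiplication map of global sections, extracted from the Koszul complex associated to an $m$-dimensional subspace $V\subseteq H^0(M)$ whose sections share no common zero on $X$. Once surjectivity of
\[
\mu\colon V\otimes H^0(M^{-1}\otimes L)\longrightarrow H^0(L)
\]
is established, any $u\in H^0(L)^{\times}$ admits an expression $u=\sum_{i=1}^m x_i y_i$ with $x_i\in V\subseteq H^0(M)$ and $y_i\in H^0(M^{-1}\otimes L)$. Choosing $L_i=M$, $R_i=M^{-1}\otimes L$, and $\phi_i$ the canonical isomorphism $M\otimes(M^{-1}\otimes L)\cong L$ for each $i$ exhibits the required decomposition in the sense of Definition \ref{defstrength}, giving $\mathrm{s}_X(u)\le m$.

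To produce $V$, I would argue as follows. If $m=h^0(M)$, take $V=H^0(M)$: the base locus of $|M|$ is empty because $M$ is globally generated. If $m=n+1\le h^0(M)$, take a general $m$-dimensional $V$; since $X$ has dimension $n$, a codimension count shows that $n+1$ general sections of a globally generated line bundle cut out an empty common vanishing locus. Either way, the resulting $m$ sections form a ``regular sequence'' for $M$ in the sense that their Koszul complex
\[
0\to\wedge^m V\otimes M^{-m}\to\wedge^{m-1}V\otimes M^{-(m-1)}\to\cdots\to V\otimes M^{-1}\to\mathcal{O}_X\to 0
\]
is exact. Twisting by $L$ preserves exactness.

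Next, I would split the twisted complex into short exact sequences by means of the syzygy sheaves $K_j:=\op{im}(d_{j+1})$, producing a cascade of long exact sequences in cohomology. The hypotheses
\[
H^k(M^{-k}\otimes L)=H^k(M^{-k-1}\otimes L)=0\ \ (1\le k\le m-1),\qquad H^m(M^{-m}\otimes L)=0
\]
are positioned precisely so that the obstruction $H^1(K_1)$ to the surjectivity of $\mu$ on $H^0$ is successively identified with $H^2(K_2),H^3(K_3),\ldots$, and each of these groups is in turn killed by the next vanishing in the list, terminating harmlessly at $H^m(K_m)=H^m(C_m)=0$.

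The main obstacle is this cohomological chase: one must keep careful book-keeping of which listed vanishing intervenes at which splitting, so that every connecting morphism in the ladder of long exact sequences lands in a group forced to be zero by the hypotheses. A secondary, routine check is the non-triviality of the line bundles appearing in the decomposition: $M$ is non-trivial by hypothesis, while the degenerate case $M^{-1}\otimes L\cong\mathcal{O}_X$ (equivalently $L\cong M$) must be treated separately, replacing the above factorization by an admissible one to remain within the framework of Definition \ref{defstrength}.
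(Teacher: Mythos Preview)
Your proposal is correct and follows essentially the same approach as the paper: choose an $m$-dimensional $V\subseteq H^0(M)$ generating $M$ (using global generation when $m=h^0(M)$, or a generic-projection/dimension argument when $m=n+1$), form the Koszul complex of these sections twisted by $L$, split it into short exact sequences, and climb the ladder of connecting maps to kill $H^1$ of the first syzygy sheaf via the listed vanishings; surjectivity of $\mu$ then gives $\mathrm{s}_X(u)\le m$.

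One remark: your flag on the degenerate case $L\cong M$ is a genuine observation that the paper's proof does not address. In that case the proposed decomposition $u=\sum x_i\cdot 1$ with $1\in H^0(\mathcal{O}_X)$ is not admissible under Definition~\ref{defstrength}, so a separate argument is indeed needed (or the hypothesis $M\ncong L$ should be added, as in Lemma~\ref{a7}). This is a refinement on your part rather than a defect of your approach.
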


\begin{proof}
Let $W\subseteq H^0(M)$ be a general $m$-dimensional linear subspace. To show the statement is enough to check that there is a surjective map: 
\[
\mu_{W,L}: H^0(L\otimes M^{-1})\otimes W \longrightarrow H^0(L)\longrightarrow 0. 
\]

If $h^0(M)\le n+1$, then $m=h^0(M)$ and so $W$ spans $M$ by definition of globally generated sheaf. Suppose $h^0(M) \geq n+2$ and so $m=n+1 = \dim X +1$. 
Since $M$ is globally generated, the linear system $|M|$ induces a unique morphism $\phi_M: X\rightarrow \PP(H^0(M)^{\vee})$ \cite[Theorem II.7.1(b)]{h}. 
Since $h^0(M)>m$, we may apply a generic projection from $\PP(H^0(M)^{\vee})$ to $\PP(W^{\vee})$ that restricts to a morphism on $\phi_M(X)$, i.e. $\pi: X\rightarrow \PP(W^{\vee})$. Therefore $\pi\circ \phi_M$ is a morphism on $X$ and so this implies that $W$ spans the line bundle $M$, as for each point on $X$ we may find a section that does not vanish on it. 

Thus in either case, $W$ spans $M$. Upon fixing a basis of sections in $W$, we consider the sequence of line bundles on $X$:
\[
W\otimes \Oo_X \longrightarrow M \longrightarrow 0,
\] 
which is then exact, by the first paragraph of this proof. This gives a surjective morphism $W\otimes M^{-1}\rightarrow \Oo_X \rightarrow 0$. Now, consider the Koszul complex of the sections of $W$ we picked above: 
\[
\mathcal K: \cdots \longrightarrow \bigwedge^2 W\otimes M^{-2}\longrightarrow W\otimes M^{-1}\longrightarrow \Oo_X \longrightarrow 0. 
\]
Furthermore, consider the exact complex $\mathcal K_L = \mathcal K\otimes L$.  Thus: 
\[
\mathcal K_L: \ldots \stackrel{\phi_2}{\longrightarrow} \bigwedge^2 W\otimes M^{-2}\otimes L \stackrel{\phi_1}{\longrightarrow} W \otimes M^{-1}\otimes L\stackrel{\phi_{W,L}}{\longrightarrow L}\longrightarrow 0,
\]
where $\phi_k: \bigwedge^{k+1} W\otimes M^{-k-1}\otimes L \longrightarrow \bigwedge^{k} W\otimes M^{-k}\otimes L$ is the induced differential from the Koszul complex,
and $\phi_{W,L}$ is the multiplication of sections. (The map $\mu_{W,L}$ above is the map on global sections induced by $\phi_{W,L}$.)
Splitting the complex $\mathcal K_L$ into short exact sequences, at the first step one has: 
\[
0 \longrightarrow \mathrm{Ker}(\phi_{W,L}) \longrightarrow W\otimes M^{-1}\otimes L \longrightarrow L\longrightarrow 0.
\]
Taking global sections, this sequence implies that the map $\mu_{W,L}$ is surjective if and only if $H^1(\mathrm{Ker}(\phi_{W,L}))=0$. For $1\leq k\leq m$, we have the exact complexes: 
\[
0 \longrightarrow \mathrm{Ker}(\phi_{k}) \longrightarrow \bigwedge^{k+1}W\otimes M^{-k-1}\otimes L \longrightarrow \mathrm{Ker}(\phi_{k-1}) \longrightarrow 0,
\]
where we put $\phi_0 := \phi_{W,L}$. 
Taking the long exact sequences in cohomology of each of these exact sequences, for each $1\leq k\leq m$, we derive the exact complexes: 
\[
\bigwedge^{k+1} W \otimes H^k (M^{-k-1}\otimes L) \rightarrow H^k(\mathrm{Ker}(\phi_{k-1})) \rightarrow H^{k+1}(\mathrm{Ker}(\phi_{k})) \rightarrow \bigwedge^{k+1} W\otimes H^{k+1}(M^{-k-1}\otimes L).
\]
By assumption on the cohomological vanishings, the exact complex above is: 
\[
0 \rightarrow H^k(\mathrm{Ker}(\phi_{k-1})) \rightarrow H^{k+1}(\mathrm{Ker}(\phi_{k})) \rightarrow 0.
\]
Therefore $h^k(\mathrm{Ker}(\phi_{k-1})) = h^{k+1}(\mathrm{Ker}(\phi_{k}))$, for each $1\leq k\leq m$. Moreover, for $k=m$, the exact
complex is: 
\[
0 \rightarrow H^m(\mathrm{Ker}(\phi_{m-1})) \rightarrow H^{m+1}(\mathrm{Ker}(\phi_{m})) \rightarrow 0, 
\]
where $\mathrm{Ker}(\phi_{m})=0$, thus $H^{m+1}(\mathrm{Ker}(\phi_{m}))=0$ and $H^m(\mathrm{Ker}(\phi_{m-1}))=0$. This implies $H^1(\mathrm{Ker}(\phi_{W,L}))= 0$ and so the surjectivity of the map $W\otimes H^0(L\otimes M^{-1})\longrightarrow H^0(L)$ is proven.
\end{proof}

Let $M$ be a non-trivial line bundle on $(X,L)$ such that $M\ncong L$. For any linear subspace $W\subseteq H^0(M)$, let
\[
\mu_{W,L} := \mu _{{L\otimes M^{-1}, M}_{|W}}: H^0(L\otimes M^{-1})\otimes W \rightarrow H^0(L) 
\]
be the restriction to $W$ of the multiplication map of sections. As in the proof of Theorem \ref{x3}, when $W$ spans $M$, the image $\mathrm{Im}(\mu_{W,L})$ may be be studied by the means of the Koszul complex. The case $\dim W=2$ is related to the classical {\it base-point free trick}.

\begin{corollary}\label{d1}
Let $m = \dim W$. Then $\dim \mathrm{Im}(\mu_{W,L}) = \sum_{k=1}^{m} (-1)^{k-1} \binom{m}{k}h^0(M^{-k}\otimes L)$. 
\end{corollary}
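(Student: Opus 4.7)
The plan is to carry the Koszul complex analysis from the proof of Theorem \ref{x3} one step further, into an exact dimension computation. Since $W$ spans $M$, upon fixing a basis of $W$ the twisted Koszul complex
\[
\mathcal K_L: \ 0 \to \bigwedge^m W \otimes M^{-m}\otimes L \xrightarrow{\phi_{m-1}} \cdots \xrightarrow{\phi_1} W\otimes M^{-1}\otimes L \xrightarrow{\phi_{W,L}} L \to 0
\]
is an exact complex of sheaves on $X$. First, I would peel off its rightmost short exact sequence
\[
0 \to \mathrm{Ker}(\phi_{W,L}) \to W\otimes M^{-1}\otimes L \to L \to 0,
\]
whose long exact sequence in cohomology immediately yields
\[
\dim \mathrm{Im}(\mu_{W,L}) \;=\; m\,h^0(M^{-1}\otimes L) - h^0(\mathrm{Ker}(\phi_{W,L})).
\]

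The second step is to compute $h^0(\mathrm{Ker}(\phi_{W,L}))$ from the truncated resolution
\[
0 \to \bigwedge^m W \otimes M^{-m}\otimes L \to \cdots \to \bigwedge^2 W \otimes M^{-2}\otimes L \to \mathrm{Ker}(\phi_{W,L}) \to 0.
\]
I would split this into short exact sequences of consecutive Koszul kernels $\mathrm{Ker}(\phi_k)$, take the associated long exact sequences in cohomology, and invoke the iterated cohomological shifting already carried out in the proof of Theorem \ref{x3}, namely the chain of isomorphisms $H^k(\mathrm{Ker}(\phi_{k-1}))\cong H^{k+1}(\mathrm{Ker}(\phi_k))$. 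These force the induced complex of global sections to be exact, and additivity of dimensions in an exact complex of finite-dimensional vector spaces then gives
\[
h^0(\mathrm{Ker}(\phi_{W,L})) \;=\; \sum_{k=2}^m (-1)^k \binom{m}{k}\,h^0(M^{-k}\otimes L).
\]

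Substituting the last display into the first one produces the stated identity after rewriting $m\,h^0(M^{-1}\otimes L) = (-1)^{0}\binom{m}{1}h^0(M^{-1}\otimes L)$ and absorbing the signs. The main technical obstacle is to guarantee that every connecting homomorphism between consecutive short exact sequences vanishes on $H^0$, so that the global-section complex is actually exact; this, however, is exactly the content of the cohomological bookkeeping already performed in Theorem \ref{x3}, so the corollary is essentially a dimension-counting consequence of that argument rather than requiring any new vanishing input.
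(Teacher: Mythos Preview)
Your overall plan---split the twisted Koszul complex $\mathcal K_L$ into short exact sequences and read off an alternating sum of $h^0$'s---is exactly what the paper's one-line proof gestures at. The first displayed identity $\dim\mathrm{Im}(\mu_{W,L})=m\,h^0(M^{-1}\otimes L)-h^0(\mathrm{Ker}(\phi_{W,L}))$ is correct and requires no vanishing.

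The gap is in your second step. You invoke the chain of isomorphisms $H^k(\mathrm{Ker}(\phi_{k-1}))\cong H^{k+1}(\mathrm{Ker}(\phi_k))$ from Theorem~\ref{x3} and claim they ``force the induced complex of global sections to be exact''. They do not. Those isomorphisms live in cohomological degree $k$ varying with $k$; in Theorem~\ref{x3} they serve only to propagate the single vanishing $H^1(\mathrm{Ker}(\phi_0))=0$ up to $H^m(\mathrm{Ker}(\phi_{m-1}))=0$. What you need for exactness of the $H^0$-complex is that each connecting map $H^0(\mathrm{Ker}(\phi_{k-1}))\to H^1(\mathrm{Ker}(\phi_k))$ vanish, i.e.\ control over $H^1(\mathrm{Ker}(\phi_k))$ for \emph{every} $k\ge 1$. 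Tracing back through the Koszul short exact sequences, this would require vanishings such as $H^1(M^{-k}\otimes L)=0$ for $k\ge 3$, which are not among the hypotheses of Theorem~\ref{x3} (those only constrain $H^j(M^{-j}\otimes L)$ and $H^j(M^{-j-1}\otimes L)$).

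In fact the formula can fail outright without extra assumptions. Take $X=\PP^1$, $M=\Oo_{\PP^1}(2)$, $W=H^0(M)$ (so $m=3$ and $W$ spans $M$), and $L=\Oo_{\PP^1}(3)$. The map $\mu_{W,L}$ is surjective, so $\dim\mathrm{Im}(\mu_{W,L})=h^0(\Oo_{\PP^1}(3))=4$, whereas the right-hand side is $3\cdot h^0(\Oo_{\PP^1}(1))-3\cdot h^0(\Oo_{\PP^1}(-1))+h^0(\Oo_{\PP^1}(-3))=6$. So the corollary must be read under implicit hypotheses making the $H^0$-Koszul complex exact, and your proof should identify and verify those rather than assert that the bookkeeping of Theorem~\ref{x3} already supplies them.
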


\begin{proof}
This is a calculation using the complex $\mathcal K_L$ in the proof of Theorem \ref{x3}. 
\end{proof}

\begin{example}
Theorem \ref{x3} may be applied to many cases. In particular, for any $X$ and any globally generated line bundle $M$ on $X$, the vanishings in the assumption are satisfied for a sufficiently positive $L$ by a theorem of Serre. In fact, if $X$ is smooth, by Kodaira's vanishing, it would be sufficient to have the ampleness of $L\otimes \omega _X^{-1} \otimes M^{-k}$ for every $1\leq k\leq n$. 

For the sake of an example, suppose $\mathrm{Pic}(X) \cong \ZZ H$ with $H$ ample (e.g. $X$ satisfies this condition if it is a smooth complete intersection of dimension $\geq 3$ in projective space). Write $M \cong H^{\otimes m}$, $L \cong H^{\otimes \ell}$ and $\omega _X\cong H^{\otimes e}$ (in multiplicative notation). The ampleness of $L\otimes \omega _X^{-1} \otimes M^{-k}$ is guaranteed when $\ell -mk - e>0$.

\end{example}

\section{The $R$-strength}\label{mstrengthsec}

\begin{definition}\label{mstrength}
Let $(X,L)$ be a pair as above. Let $R$ be a non-trivial line bundle on $X$ such that $R\ncong L$. For any linear subspace $W\subseteq H^0(R)$, let
\[
\mu_{W,L} := \mu _{{L\otimes R^{-1}, R}_{|W}}: H^0(L\otimes R^{-1})\otimes W \rightarrow H^0(L) 
\]
be the restriction to $W$ of the multiplication map of sections. For any $u\in H^0(L)^{\times}$, the {\it restricted strength} or 
$R$-{\it strength} of $u$ is the minimal dimension $\dim W$ of some linear subspace $W\subseteq H^0(R)$ such that $u\in
\mathrm{Im}(\mu _{W,L})$ and is denoted $\mathrm{s}^{R}(u)$. Equivalently, $\mathrm{s}^{R}(u)$ is the minimal 
$r$ such that 
\[
u = \sum_{i=1}^r \phi^0\left(\mu_{L\otimes R^{-1}, R}(x_i\otimes y_i)\right),
\]
where $\phi^0$ is the isomorphism on global sections induced by $\phi: L\otimes R^{-1}\otimes R\rightarrow L$, $x_i\in H^0(L\otimes R^{-1})$, and 
$y_i\in H^0(R)$. We say that $\mathrm{s}^{R}(u)=+\infty$ if there is no such a linear subspace $W\subseteq H^0(R)$. 
\end{definition}

\begin{remark}\label{specialcaseofstrength}
Note that Definition \ref{mstrength} is a special case of the strength for the sections of $L$. Indeed, in Definition \ref{defstrength}, 
take $L_i = L\otimes R^{-1}, R_i = R$, and $r = \dim W$ whenever it is defined for a given $u$. Therefore $1\leq \mathrm{s}(u)\leq \mathrm{s}^R(u)$. 

If $u\in H^0(L)^{\times}$ has strength $\mathrm{s}(u) = 1$, then there exists a line bundle $R$ such that $\mathrm{s}^R(u) = 1$. Moreover, if $\mathrm{s}^R(u) = +\infty$ for all $u\in H^0(L)^{\times}$ and all line bundles $R$, then $\mathrm{s}(u) = +\infty$ for all $u\in H^0(L)^{\times}$. 

Example \ref{newd1} shows that in general $\mathrm{s}(u)< \mathrm{s}^R(u)$ for all $R$ line bundles on the given pair $(X,L)$. 
\end{remark}

\begin{remark}\label{genslicerank}
The definition of $R$-strength is a generalization of the notion of (symmetric) {\it slice rank} \cite{SawinTao} of homogeneous polynomials. 
For the first, consider the pair $X = \PP(V), L = \Oo_{\PP(V)}(d)$, and let $f\in H^0(\Oo_{\PP(V)}(d))\cong S^d V$. Instead of allowing 
line bundles $\Oo_{\PP(V)}(e_i)$ for arbitrary integers $1\leq e_i\leq d-1$ as in Example \ref{strengthpoly}, a slice decomposition of a homogeneous polynomial
$f$ requires to choose $e_i = 1$ for all $i$. This coincides with the restricted strength of $f$, taking $R = \Oo_{\PP(V)}(1)$ in Definition \ref{mstrength}. 
\end{remark}

\begin{theorem}\label{e1}
Assume that $\mathrm{Pic}(X)$ is finitely generated. Fix $L\in \mathrm{Pic}(X)$ such that $H^0(L) \ne 0$. 
Let $\Rr$ be the set of all $R\in \mathrm{Pic}(X)$ such that $h^0({R}) >0$,  $h^0(L\otimes R^{-1}) >0$, $R\ncong \Oo_X$ and $R\ncong L$. Then:

\begin{enumerate}
\item[(i)] If $\Rr =\emptyset$, then $s_X(u)=+\infty$ for all $u\in H^0(L)^{\times}$.

\item[(ii)] Assume $\Rr \ne \emptyset$. Then either there is a countable intersection $U$ of non-empty open subsets of $H^0(L)$ such that $s(u) = \mathrm{s}^R(u)=+\infty$ for all $u\in U$, or for each $R\in \Rr$ there is a non-empty open subset $U_{R}$ of $H^0(L)$ such that $\mathrm{s}^R(u)$ is constant (possibly infinite) for all $u\in U_{R}$.
\end{enumerate}
\end{theorem}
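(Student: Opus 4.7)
The two parts are handled independently. For part (i), I will argue by contradiction: if $s_X(u) < \infty$ for some $u \in H^0(L)^{\times}$, then a non-zero summand $\mu(x_1 \otimes y_1)$ of the strength decomposition of $u$ forces the existence of a non-trivial line bundle $R_1 \in \mathrm{Pic}(X)$ with $R_1 \ncong L$ (since $L_1 \cong L \otimes R_1^{-1}$ must be non-trivial) and $h^0(R_1), h^0(L \otimes R_1^{-1}) > 0$, so $R_1 \in \Rr$, contradicting $\Rr = \emptyset$.

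For part (ii), my strategy is to first analyze each $R \in \Rr$ individually via constructibility and Noetherianity, and then combine these across $R$ using the finite-dimensionality of $H^0(L)$ and the countability of $\mathrm{Pic}(X)$. I will begin by observing that $\mathrm{Pic}(X)$ finitely generated abelian implies $\Rr$ is countable. For each $R \in \Rr$, the image $\Sigma^R := \mathrm{Im}(\mu_{H^0(R),L}) = \{u : s^R(u) < \infty\}$ is a $\CC$-linear subspace of $H^0(L)$ (as the image of a linear map), hence Zariski closed. The subsets $\Sigma_r^R := \{u : s^R(u) \le r\}$ are constructible by Chevalley's theorem applied to the morphism $(H^0(L \otimes R^{-1}) \times H^0(R))^r \to H^0(L)$ sending $((x_i,y_i))_i \mapsto \sum_i x_i y_i$. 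This leads to a dichotomy per $R$: if $\Sigma^R = H^0(L)$, Noetherianity picks out a smallest $r_1(R)$ with $\overline{\Sigma_{r_1(R)}^R} = H^0(L)$; since $\Sigma_{r_1(R)}^R$ is constructible and dense in the irreducible $H^0(L)$, it contains a non-empty Zariski open $V$, and $U_R := V \setminus \overline{\Sigma_{r_1(R)-1}^R}$ is non-empty open with $s^R \equiv r_1(R)$. If instead $\Sigma^R \subsetneq H^0(L)$, then $U_R := H^0(L) \setminus \Sigma^R$ is non-empty open with $s^R \equiv +\infty$. In either case the second alternative holds.

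To derive the first alternative I will consider $\Sigma := \sum_{R \in \Rr} \Sigma^R \subseteq H^0(L)$. Since $H^0(L)$ is finite-dimensional, this sum is attained by finitely many $\Sigma^{R_i}$, so $\Sigma$ is a closed linear subspace. If $\Sigma \subsetneq H^0(L)$, take $U := H^0(L) \setminus \Sigma$, a non-empty open (and trivially a countable intersection of non-empty opens). For $u \in U$, any finite strength decomposition of $u$ would place $u \in \sum_i \Sigma^{R_i} \subseteq \Sigma$, contradicting $u \notin \Sigma$; hence $s(u) = +\infty$, and since $\Sigma^R \subseteq \Sigma$ likewise $s^R(u) = +\infty$ for every $R \in \Rr$. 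If instead $\Sigma = H^0(L)$, then every section has finite strength and only the second alternative applies. The main obstacle is precisely the argument that $s(u) = +\infty$ (and not merely $s^R(u) = +\infty$ for each individual $R$) on the set $U$: a strength decomposition may combine summands with varying $R_i$, so infinitude of $s(u)$ cannot be read off from any single $s^R$. The decisive input is that each $\Sigma^R$ is \emph{linear}, which combined with $\dim H^0(L) < \infty$ renders $\sum_R \Sigma^R$ a bona fide finite sum of closed linear subspaces, hence itself closed; this is what permits one to compare the global strength against the countably many $R$-strengths.
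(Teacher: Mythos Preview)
Your proof is correct and largely parallels the paper's: both argue part (i) by noting that any summand in a strength decomposition produces an element of $\Rr$, and both handle part (ii) per $R$ via Chevalley constructibility of the loci $\{s^R \le r\}$ together with the dichotomy according to whether the linear span $\Sigma^R$ (the paper's $\Aa_R$) fills $H^0(L)$ or not.

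Where you diverge is in establishing the first alternative of (ii). The paper, assuming every $\Bb_R := H^0(L) \setminus \Sigma^R$ is non-empty, sets $U = \bigcap_{R \in \Rr} \Bb_R$ and asserts that $s^R(u) = +\infty$ for all $R$ forces $s(u) = +\infty$. You instead form $\Sigma = \sum_{R \in \Rr} \Sigma^R$, observe that finite-dimensionality of $H^0(L)$ makes this a \emph{finite} sum of linear subspaces (hence itself a single closed linear subspace), and take $U = H^0(L) \setminus \Sigma$ when $\Sigma \subsetneq H^0(L)$. This buys you two things: your $U$ is a genuine non-empty Zariski open set rather than merely a countable intersection, and the conclusion $s(u) = +\infty$ on $U$ is immediate, since any strength decomposition of $u$ lands in some finite sum $\Sigma^{R_1} + \cdots + \Sigma^{R_k} \subseteq \Sigma$. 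The paper's passage from ``$s^R(u)=+\infty$ for every $R$'' to ``$s(u)=+\infty$'' is stated without elaboration, and your argument makes that step (which is not automatic, as a strength decomposition may mix several $R_i$) completely transparent.
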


\begin{proof}
The $R$-strength is only defined if there exists $R\in \Rr$. Thus part (i) is clear.\\ 
\noindent For statement (ii), fix $R\in \Rr$. We have a regular map $\psi_R: \PP(H^0(R)) \times \PP(H^0(L\otimes R^{-1})) \rightarrow \PP(H^0(L))$, as explained in Remark \ref{d5}. Thus $\mathrm{Im}(\psi_R)$ is a closed projective subvariety in $\PP(H^0(L))$. 

Let $\Aa _R\subseteq H^0(L)$ be the affine cone over the projective span of $\mathrm{Im}(\psi_R)$, and define $\Bb _R =  H^0(L)\setminus \Aa _R$. The set $\Bb _R$ is an open set in $H^0(L)$ (possibly empty). By definition, $\Aa _R$ is the set of all $u\in H^0(L)^{\times}$ with finite $R$-strength, i.e. all points of $\Aa _R$ have $R$-strength $\le h^0({R})<+\infty$. Since for each positive integer $k$ the Grassmannian of all $k$-dimensional linear subspaces of $H^0(R)$ is complete, an application of Chevalley's theorem \cite[Ex.II.3.18,II.13.19]{h} gives the existence of a non-empty Zariski open subset $U_{R}$ of $\Aa _R$ such that  $\mathrm{s}^R(u) = k_R$ for all $u\in U_{R}$. 

Let $\mathcal E$ be the set of $R\in \Rr$ such that $\Bb _R =\emptyset$. If $\mathcal E =\emptyset$ we take $U = \cap _{R\in \Rr} \Bb _R$.  In this case, we have $\mathrm{s}^R(u) =+\infty$ for all $u\in U$ and all $R\in \Rr$. Thus $\mathrm{s}(u) = +\infty$ for all $u\in U$. 

Assume $\mathcal E\ne \emptyset$. For each $R\in \mathcal E$, the set $U_{R}$ is a non-empty open subset of $H^0(L)$ and all $u\in U_{R}$ have constant (and finite) $R$-strength. For each $R\notin \mathcal E$, we have $\Bb_R\neq \emptyset$. Then we define $U_R = \Bb_R$, which consists of sections $u$ such that $\mathrm{s}^R(u) = +\infty$. 
\end{proof}

\begin{example}\label{111}
Let $X$ be a smooth curve of genus $g\ge 0$ and $L$ be a line bundle on $X$ such that $h^0(L) >0$ of degree $d = \deg (L)$. 
\begin{enumerate}

\item[(i)] Suppose $h^0(L) =1$. We have
$|L| =\{D\}$ for an effective divisor $D\ne 0$ on $X$, $D =m_1p_1+\cdots +m_rp_r$ with $m_i>0$, $p_i\ne p_j$ for all
$i\ne j$ and $m_1+\cdots +m_r =d$. If $d=1$, then $\mathrm{s}_X(u)=+\infty$ for all $u\in H^0(L)^{\times}$. Let $\Rr _D$ be the set all isomorphism classes of line bundles 
$\Oo _X(a_1p_1+\cdots a_rp_r)$ with $0 \le a_i \le m_i$ for all $i$ and $0< a_1+\cdots +a_r <d$. For any $u\in H^0(L)^{\times}$, one has $\mathrm{s}^R(u)=1$ if $R\in \Rr _D$ and $\mathrm{s}^R(u)>1$ if $R\notin \Rr_D$. \\

\item[(ii)] Suppose $h^0(L)\ge 2$.  Let $\Rr$ be the set of all line bundles $R$ on $X$ such that  $h^0({R})>0$,
$h^0(L\otimes R^{-1})>0$, $R\ncong \Oo _X$ and $R\ncong L$. If $\Rr =\emptyset$, then $\mathrm{s}_X(u)=+\infty$ for all $u\in
H^0(L)^{\times}$ by Theorem \ref{e1}(i). (If $g=0$, then $\Rr \ne \emptyset$ if and only if $d\ge 2$ and $\Rr =\{\Oo
_{\PP^1}(t)\}_{0<t<d}$. For any integer $t$ such that $1\le t\le d-1$, each $u\in H^0(L)^{\times}$ has
$\Oo _{\PP^1}(t)$-strength $1$.)

By Proposition \ref{c1}, all $u\in H^0(L)^{\times}$ have $\mathrm{s}_X(u)=1$. We now consider the $R$-strength of $u$ for a fixed $R\in \Rr$. For any $u\in H^0(L)^{\times}$, set $D_u = \{u=0\}\in |L|$.  Fix any $D\in |L|$ and write $D =m_1p_1+\cdots +m_rp_r$ with $m_i>0$ and $p_i\ne p_j$, where $d =m_1+\cdots +m_r$. Again, let $\Rr _D$ be the set of all isomorphism classes of line bundles $\Oo _X(a_1p_1+\cdots +a_rp_r)$ with $0 \le a_i \le m_i$ for all $i$ and $0< a_1+\cdots +a_r <d$. Note that $\Rr = \bigcup _{D\in |L|} \Rr _D$. For any $u\in H^0(L) ^{\times}$ we have $\mathrm{s}^R(u) =1$ if $R\in \Rr _{D_u}$, whereas $\mathrm{s}^R(u) >1$ otherwise. \\

\item[(iii)] Assume $|L|$ has a base locus $B$ and write $B = b_1q_1+\cdots +b_rq_r$ with $r\ge 1$, $b_i>0$ for all $i$ and $q_i\ne q_j$ for all $i\ne j$. By the definition of base locus, one has $h^0(\Oo _X(B))=1$, $h^0(L(-B)) =h^0(L)$ and $L(-B)$ is base-point free. Similarly as above, let $\mathcal R_B$ the set of all isomorphism classes of line bundles $\Oo _X(a_1q_1+\cdots +a_rq_r)$ with $0\le a_i\le b_i$ for all $i$ and $a_1+\cdots +a_r>0$. So $\mathrm{s}^R(u) =1$ for all $R\in \mathcal R_B$ and all $u\in H^0(L)^{\times}$. \\

\item[(iv)] Fix $R\in \Rr$ such that $h^0({R})=1$, say $|R| =\{D\}$ with $D =m_1p_1+\cdots +m_rp_r$, where $m_i>0$ and $p_i\ne p_j$. Fix $u\in H^0(L)^{\times}$. We have $\mathrm{s}^R(u) =1$  if and only if $D_u\supset D$. Assume also $h^0(L\otimes R^{-1}) =1$, say $|L\otimes R^{-1}| =\{E\}$ with $E =a_1q_1+\cdots +a_tq_t$, $t\ge 1$, $a_i>0$, and $q_i\ne q_j$. 

In this case, $\mathrm{s}^R(u) =1$ if and only if $D_u =D+E$. Thus all $u\in H^0(L)^{\times}$ with $\mathrm{s}^R(u)=1$ are proportional. So for $v\in H^0(L)^{\times}$ either $\mathrm{s}^R(v)=1$ or $\mathrm{s}^R(v)=+\infty$.\\

\item[(v)] Now assume $h^0(L) =2$ and that $|L|$ has no base points. This is Example \ref{a3}, but with $X$ being a smooth curve. Take $R\in \Rr$ and so $R\ncong \Oo _X$, $R\ncong L$, $h^0({R})>0$ and $h^0(L\otimes R^{-1})>0$. Since $L$ has no base locus, we have $0< h^0(L\otimes R^{-1}) < h^0(L)=2$ and so $h^0(L\otimes R^{-1})=1$. Then take an effective divisor $D\in |L\otimes R^{-1}|$; so $R\cong L(-D)$. Again, since $L$ is base point free, we have $0< h^0(R) = h^0(L(-D)) < h^0(L)=2$
and so $h^0(R)=1$. 

Fix $w\in H^0({R})^{\times}$ and $v\in H^0(L\otimes R^{-1})^{\times}$. Since $h^0({R})=h^0(L\otimes R^{-1}) =1$, all $u\in H^0(L)^{\times}$ with $s^R(u)=1$ are of the form $u= cw\otimes v$ for some constant $c\ne 0$.
Since $cw\otimes v +c'w\otimes v =(c+c')w\otimes v$ for all constants $c, c'$, we get $s^R(u)=+\infty$ for a general $u\in H^0(L)^{\times}$. Since this is true for all $R\in \Rr$, there is no line bundle $R$ such that $\mathrm{s}^R(u)<+\infty$ for all $u\in H^0(L)^{\times}$. 

Therefore, in this case, strength is more interesting than $R$-strength. Line bundles $L$ as in (v) exist for all smooth curves of genus $g>0$. By Riemann-Roch theorem, when $g=1$, $L$ may be taken to be any degree $2$ line bundle, while for $g\ge 2$ we may take as $L$ a general line bundle of degree $g+1$. For $g=2$, then we may choose $L =\omega _X$; for $g>2$, possible choices for $L$ are those line bundles evincing the gonality of $X$. 

\end{enumerate}
\end{example}

\begin{remark}\label{d5}
Let $X$ be an integral projective variety. Let $L$ and $R$ be non-trivial line bundles on $X$ such that $R\ncong L$, $h^0(R)>0$ and $h^0(L\otimes R^{-1})>0$. One can construct the variety $\Delta _R$ of all $u\in \PP(H^0(L))$ of the form $x\otimes y$ with $x\in \PP(H^0(L\otimes R^{-1}))$ and $y\in \PP(H^0(R))$, i.e. those sections with $R$-strength $\mathrm{s}^R(u) = 1$. Note that, since $X$ is integral, the bilinear map corresponding to the multiplication of sections is non-degenerate. Thus $\Delta_R$ is a projective variety of dimension $\min\lbrace h^0(R)+h^0(L\otimes R^{-1})-2, h^0(L)-1\rbrace$. Let $\mathcal R$ be the set of all line bundles $R$ on $X$ such that  $R\ncong \Oo_X$,$R\ncong L$, $h^0(R)>0$ and $h^0(L\otimes R^{-1})>0$. By Theorem \ref{e1}(i), if $\mathcal R =\emptyset$, then $\mathrm{s}_X(u)=+\infty$ for all $u\in H^0(L)$.
Otherwise, $\mathcal R \ne \emptyset$, and the set $\Delta =\cup _{R\in \mathcal R} \Delta _R$ is a union of irreducible projective varieties, whose dimensions are given above. 

A more intrinsic way to see these sets $\Delta_R$ is as follows. Let $a = h^0(R)-1, b = h^0(L\otimes R^{-1})-1$. Thus $\PP(H^0(R))\cong \PP^a$ and $\PP(H^0(L\otimes R^{-1}))\cong \PP^b$. Let $\mathrm{Seg}: \PP(H^0(L\otimes R^{-1}))\times  \PP(H^0(R)) \to \PP^{ab+a+b}$ denote the Segre embedding. One has $\mathrm{Im}(\mu_{L\otimes R^{-1},R})\cong  H^0(L\otimes R^{-1})\otimes H^0(R)/\mathrm{ker}(\mu_{L\otimes R^{-1},R})\subset H^0(L)$, where $\mu_{L\otimes R^{-1},R}$ is the multiplication map.

If $\mathrm{ker}(\mu_{L\otimes R^{-1}, R})\neq 0$, let $K\subset \PP^{ab+a+b}$ be the projectivisation of $\mathrm{ker}(\mu_{L\otimes R^{-1}, R})$ and let $\ell _K: \PP^{ab+a+b}\setminus K \rightarrow \PP(H^0(L))$ denote the linear projection from $K$. Since $\mu_{L\otimes R^{-1}, R}$ is non-degenerate, one has $K\cap \mathrm{Seg}(\PP(H^0(L\otimes R^{-1}))\times  \PP(H^0(R))) =\emptyset$. Thus $\pi_R = \ell_K\circ \mathrm{Seg}: \PP(H^0(L\otimes R^{-1}))\times  \PP(H^0(R))\rightarrow \PP(H^0(L))$ is a well-defined morphism. So $\Delta_R = \pi_R (\PP(H^0(L\otimes R^{-1}))\times  \PP(H^0(R)))$.  

When the multiplication map $\mu_{L\otimes R^{-1},R} : H^0(L\otimes R^{-1})\otimes H^0(R)\rightarrow H^0(L)$ is injective, the closure of all $u\in \PP(H^0(L))$ with $R$-strength $\le k$ coincides with the $k$-th secant variety of the Segre embedding of $\PP^{a}\times \PP^{b}$, where $a = h^0(R)-1, b = h^0(L\otimes R^{-1})-1$. 

For any integer $k>0$, let $\sigma ^0_k(\{\Delta _R\}_{R\in \mathcal R})$ denote the set of all $u\in
\PP(H^0(L))$ such that $u\in \langle u_1, \cdots, u_k\rangle$, with $u_i\in  \Delta _{R_i}$ and $R_i\in \mathcal R$. This is a constructible set and its closure is the {\it join variety} of all the $\Delta_{R_i}$, which is the locus of points that are limits of sections $u\in \PP(H^0(L))$ of strength at most $k$. 
\end{remark}

\begin{remark}\label{d8}
Keep the notation from Remark \ref{d5}. Note that $R\in \mathcal R$ if and only if $L\otimes R^{-1} \in  \mathcal R$ and that $\Delta _R = \Delta_{L\otimes R^{-1}}$.
Suppose $\#(\mathcal R)=1$, say $\mathcal R =\{R\}$. Then $L\otimes R^{-1} \in \mathcal R$, and so $L\cong R^{\otimes 2}$. 

Let $\mu: H^0(R)^{\otimes 2}\to H^0(L)$, denote the multiplication map. Set
$a = h^0(R), b = h^0(L)$ and let $c = \mathrm{rk}(\mu)$, i.e. the rank of the associated matrix in a basis. Note that $\mu$ vanishes on the $\mathrm{GL}(H^0(R))$-module $\wedge^2H^0(R)$, because the tensor product of sections of line bundles corresponds to addition of Cartier divisors (which is commutative). Hence $\mathrm{Im}(\mu) =\mathrm{Im}(\mu^2)$, where $\mu^2: S^2H^0(R)\to H^0(L)$ is the corresponding symmetric multiplication map. The classification of quadratic forms in $a$-many variables shows that each element of $\mathrm{Im}(\mu^2)$ has strength $\le \lfloor (a+1)/2\rfloor$. If $b>c$, then each element of $H^0(L)\setminus\mathrm{Im}(\mu^2)$ has infinite strength. 
\begin{enumerate}
\item[(i)] Assume $c=\binom{a+1}{2}$, i.e. assume that $\mu^2$ is injective. In this case, for any $u\in \mathrm{Im}(\mu^2)\setminus \{0\}$ the strength of $u$ is the strength of the quadratic form $f$ such that $\mu^2(f) =u$. If $f$ has matrix rank $r$, then $\mathrm{s}_X(u) =\lceil r/2\rceil$.

\item[(ii)] By the Bilinear Lemma \cite[Proposition 1.3]{eis}, one has $c = \dim(\mathrm{Im}(\mu)) \ge 2\dim(R)-1=2a-1$. Moreover, when $R$ is globally generated, there is a classification of all pairs $(X,R)$ with $c=2a-1$ \cite[Proposition 1.6]{eis}. 
\end{enumerate}
\end{remark}

\section{Interlude: over the real numbers}\label{reals}
In this section, varieties and line bundles are defined over $\RR$. Let $X$ be an integral projective variety defined over $\RR$ and let $\sigma: \CC \to \CC$ denote the complex conjugation. The complex conjugation induces a real algebraic isomorphism $\sigma : X(\CC)\to X(\CC)$ and we define the real part of $X$ to be 
\[
X(\RR) = \{x\in X(\CC)\mid \sigma (x)=x\}.
\]

\begin{definition}
For a line bundle $L$ on $X$  defined over $\RR$, one has an induced isomorphism $\sigma: H^0(X(\CC),L) \to H^0(X(\CC),L)$ of complex vector spaces. Define: 
\[
H^0(X(\CC),L)^{\sigma} = \{u\in H^0(X(\CC),L) \mid \sigma (u)= u\}
\] 
to be the real vector space of all $u\in H^0(X(\CC),L)$ defined over $\RR$. One has the equality $\dim _\RR H^0(X(\CC),L)^{\sigma} = \dim _\CC H^0(X(\CC),L)$. 
\end{definition}

As in the previous sections, let $L$ and $R$ be non-trivial line bundles on $X$ defined over $\RR$ and with $h^0(R)>0$, $R\ncong L$ (over $\CC$) and $h^0(L\otimes R^{-1})>0$. Consider the multiplication map 
\[
\mu : H^0(X(\CC),L\otimes R^{-1})\otimes H^0(X(\CC),R) \to H^0(X(\CC),L).
\]
Since $L$ and $R$ are real, $\mu$ commutes with $\sigma$, i.e. for each pair of sections $(u,v)\in H^0(X(\CC),L\otimes R^{-1})\times  H^0(X(\CC),R)$, we have
$\mu (\sigma (u)\otimes \sigma (v)) = \sigma (\mu (u\otimes v))$. Thus $\sigma (\mathrm{Im}(\mu)) = \mathrm{Im}(\mu)$ and $\mathrm{Im}(\mu (H^0(X(\CC), L\otimes R^{-1})^\sigma\otimes H^0(X(\CC),R) ^\sigma)\subseteq H^0(X(\CC), L)^\sigma$.

\begin{definition}
If $u\notin \mathrm{Im}(\mu (H^0(X(\CC), L\otimes R^{-1})^\sigma\otimes H^0(X(\CC),R) ^\sigma)$, we say that $u$ has real $R$-strength $s^{R}_{\RR}(u)=+\infty$.

Otherwise, the real $R$-strength $s^{R}_{\RR}(u)$ of $u$ is the minimal integer $r$ such  that
there exist $u_1,\dots ,u_r\in H^0(R)^\sigma$ and $v_1,\dots ,v_r\in H^0(L\otimes R^{-1})^\sigma$ such that $u =\sum _{i=1}^{r} \mu (u_i\otimes v_i)$.
Analogously, given $(X,L)$ over the reals, one defines the {\it real strength} $\mathrm{s}_{\RR}(u)$ and it is clear that $s_\RR(u)\geq s(u)$.
\end{definition}

\begin{example}\label{r1}
Let $X = \PP^1$ with $L = \Oo_{\PP^1(\CC)}(2)$ and $R = \Oo _{\PP^1(\CC)}(1)$. The section $u = x^2+y^2$ has complex $R$-strength $\mathrm{s}(u)=1$, but real $R$-strength two. Note that, more generally, for an arbitrary projective $X=\PP^n$, complex and real $R$-strength of quadratic forms
only depends on their rank and signature. 
\end{example}

\begin{definition}
Let $L$ be a line bundle defined over $\RR$. For any $u\in H^0(X(\CC),L)$ set $\Ree(u) = (u+\sigma (u))/2$ and $\Imm(u) =
(u-\sigma (u)/2i$. Since $\sigma ^2$ is the identity map, we have $\Ree(u)\in H^0(X(\CC),L)^\sigma$, $\Imm (u)\in
H^0(X(\CC),L)^\sigma$, $u =\Ree(u)+ i\Imm(u)$, where $\Ree(u)$, $\Imm(u)$ are the only elements of $H^0(X(\CC),L)^\sigma$ such that
$u =\Ree(u)+i\Imm(u)$.
\end{definition}

\begin{theorem}\label{r2}
Let $L$ be defined over $\RR$ with $h^0(L)>0$. Suppose that all line bundles $R$ such that $h^0(R)>0$ and $h^0(L\otimes
R^{-1})>0$ are defined over $\RR$. Then $s_\RR(u) \le 2s(u)$ for all $u\in H^0(X(\CC),L)^\sigma$.
\end{theorem}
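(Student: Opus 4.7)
The plan is to start from a complex strength decomposition of the real section $u$ and produce a real one by averaging with complex conjugation. The hypothesis on $L$ guarantees that every line bundle occurring in any strength decomposition is already defined over $\RR$, which is precisely what makes this averaging meaningful.

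Concretely, assume $s(u) = r$ and fix a decomposition
\[
u = \sum_{i=1}^{r} \phi_i^0\bigl(\mu_{L_i, R_i}(x_i \otimes y_i)\bigr)
\]
with non-trivial line bundles $L_i, R_i$, isomorphisms $\phi_i : L_i \otimes R_i \to L$ over $\CC$, and nonzero sections $x_i \in H^0(X(\CC), L_i)$, $y_i \in H^0(X(\CC), R_i)$. Since $x_i, y_i$ are nonzero, $h^0(R_i) > 0$ and $h^0(L \otimes R_i^{-1}) = h^0(L_i) > 0$; hence the hypothesis applied with $R := R_i$ forces $R_i$ to be defined over $\RR$, and therefore $L \otimes R_i^{-1}$ is real as well. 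Replacing $L_i$ by $L \otimes R_i^{-1}$ (and transporting $x_i$ via the isomorphism), we may assume each $L_i$ is real and that $\phi_i$ is the canonical identification $L_i \otimes R_i = L \otimes R_i^{-1} \otimes R_i \to L$, which is automatically $\sigma$-equivariant; in particular the multiplication $\mu_{L_i, R_i}$ commutes with $\sigma$.

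Now decompose $x_i = a_i + i b_i$ and $y_i = c_i + i d_i$ with $a_i, b_i \in H^0(X(\CC), L_i)^\sigma$ and $c_i, d_i \in H^0(X(\CC), R_i)^\sigma$. Applying $u = \tfrac{1}{2}(u + \sigma(u))$ and using $\sigma$-equivariance term by term,
\[
u = \sum_{i=1}^{r} \phi_i^0\!\left(\mu\!\left(\tfrac{1}{2}\bigl(x_i \otimes y_i + \sigma(x_i) \otimes \sigma(y_i)\bigr)\right)\right) = \sum_{i=1}^{r} \phi_i^0\bigl(\mu(a_i \otimes c_i) - \mu(b_i \otimes d_i)\bigr),
\]
where the second equality is the elementary bilinear identity $(a+ib)(c+id) + (a-ib)(c-id) = 2(ac - bd)$. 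Absorbing the sign into $b_i$ and discarding any summands whose factors vanish, the right-hand side expresses $u$ as a sum of at most $2r$ terms, each of which is $\phi_i^0(\mu(\alpha_i \otimes \gamma_i))$ for real sections $\alpha_i, \gamma_i$ of real line bundles. By the definition of real strength, this gives $s_\RR(u) \le 2r = 2 s(u)$.

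The only subtle step is that once $L_i$ and $R_i$ are real, the iso $\phi_i : L_i \otimes R_i \to L$ can be taken to be $\sigma$-equivariant; this is a small Hilbert 90 argument (given any complex $\phi_i$, writing $\sigma \circ \phi_i \circ \sigma^{-1} = \lambda \phi_i$ forces $|\lambda| = 1$ by applying $\sigma$ twice, and rescaling $\phi_i$ by $e^{i\theta}$ with $\lambda = e^{2i\theta}$ yields a real isomorphism). In the shortcut above, this is sidestepped by transporting sections through the canonical identification. The remaining work is a direct bilinear computation, so no serious obstacle is expected.
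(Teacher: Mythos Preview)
Your argument is correct and follows essentially the same route as the paper: take a minimal complex strength decomposition, use the hypothesis to ensure each $R_i$ (hence $L\otimes R_i^{-1}$) is real, split each section into real and imaginary parts, and expand bilinearly so that the real part of $u$ becomes a sum of $2r$ real products. The paper does exactly this computation, writing $u=\sum_h \mu(\Ree(u_h)\otimes\Ree(v_h))-\mu(\Imm(u_h)\otimes\Imm(v_h))$; your extra care about the $\sigma$-equivariance of $\phi_i$ (handled either by passing to the canonical $L\otimes R_i^{-1}\otimes R_i\to L$ or by the Hilbert~90 rescaling) makes explicit a point the paper treats implicitly via the observation that $\mu$ commutes with $\sigma$ once both bundles are real.
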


\begin{proof}
If $s(u)=+\infty$, then $s_\RR(u) =+\infty$. Assume $r=s(u)<+\infty$ and write $u =\mu(u_1\otimes u_1)+\cdots
+\mu (u_r\otimes v_r)$ for some line bundles $R_1,\dots , R_r$ and some $u_i\in H^0(X(\CC),R_i)$, $v_i\in H^0(X(\CC),L\otimes
R_i^{-1})$. By assumption, each $R_i$ and each $L\otimes R_i^{-1}$ is defined over $\RR$, hence we may apply the real part
and the imaginary part to each section. Since $\mu$ is $\CC$-linear, one has $\mu ((\Ree(u_h)+i\Imm(u_h))\otimes (\Ree(v_h)+i\Imm(v_h))
= \mu (\Ree(u_h)\otimes \Ree(v_h)) -\mu(\Imm(u_h)\otimes \Imm(v_h)) + i\mu(\Imm(u_h)\otimes \Ree(v_h)) +i\mu (\Ree(u_h)\otimes \Imm(v_h))$.
Since $\Ree(u) = u$, one has $u =\sum _{h=1}^{r} \mu (\Ree(u_h)\otimes \Ree(v_h)) -\mu(\Imm(u_h)\otimes \Imm(v_h))$ and hence $s_\RR(u)
\le 2s(u)$.
\end{proof}

Theorem \ref{r2} is not true when some line bundle is not defined over $\RR$, as shown by the following example: 

\begin{example}\label{r4}
Let $X = \{x^2+y^2+z^2=0\}\subset \PP^2$ as in Example \ref{fieldext}. The line bundle $L = \Oo _{X(\CC)}(1)$ is defined over $\RR$, and let $R = \Oo_{X(\CC)}(p)$ for a given point $p\in X(\CC)$.  Thus each $u\in  H^0(X(\CC),L)^{\times}$ satisfies $\mathrm{s}^R(u)=1$. 

The line bundle $R$ on $X(\CC)$ is not defined over $\RR$, because every $u\in  H^0(X(\CC),R)$ has a unique point of $X(\CC)$ in its zero-locus, whereas $X(\RR) =\emptyset$. Thus each nonzero $u\in H^0(X(\CC),L)^\sigma$ has infinite real $R$-strength. 
\end{example}

\begin{remark}\label{r5}
Let $(X,L)$ be a pair defined over $\RR$. 
Fix a nonzero $u\in H^0(X(\CC),L)^\sigma$ and let $D= \{u=0\}\subset X(\CC)$, i.e. $D\in |L|$ with $D$ defined over $\RR$.

Write $D = \sum _j b_jE_j$ with $b_j$ positive integers, and where each $E_j$ is an integral divisor over $\RR$ ($E_j$ may be geometrically irreducible or it may split over $\CC$ as $E_j =A\cup \sigma (A)$ with $A\subset X(\CC)$ irreducible over $\RR$). We have $s_\RR(u) =1$, if $\sum b_j\ge 2$. 
\end{remark}

\begin{remark}\label{r6}
Theorem \ref{x3} holds over $\RR$, because the Koszul complex is defined over $\RR$ and it is exact at each point of $X(\CC)$. So clearly 
every result relying solely on Theorem \ref{x3} holds over $\RR$. As an example, in Proposition \ref{d14} appearing in the next section, equip $\PP^2(\CC)$ with its usual real structure. Fix an integer $d\ge 2$ and a nonzero section $u\in H^0(\PP^2(\CC),\Oo _{\PP^2(\CC)}(d))^\sigma$ . If $\{u=0\}\cap \PP^2(\RR) \ne \emptyset$ (this always holds when $d$ is odd), then the proof of Proposition \ref{d14} gives $s_{\RR}(u) \le 2$.
\end{remark}

\section{Strength and $R$-strength for some special varieties}\label{specialvarieties}

\begin{proposition}\label{d2}
Let  $X = \PP^{n_1}\times \cdots \times \PP^{n_k}$, $k\geq 2$, and $L = \Oo_X(d_1,\ldots ,d_k)$, where $n_1=1$ and $d_i\geq 1$ for all $i$. Then $\mathrm{s}_X(u) \le 2$ for all $u\in H^0(L)^{\times}$ and equality holds for general $u$.
\end{proposition}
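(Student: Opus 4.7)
The plan is to prove the upper bound $\mathrm{s}_X(u)\le 2$ for all $u\in H^0(L)^{\times}$ directly from Lemma~\ref{a7}, and then establish the generic equality $\mathrm{s}_X(u)=2$ via a Bertini-type irreducibility argument. For the upper bound, take $R := \pi_1^{\ast}\Oo_{\PP^1}(1)$, the pullback of $\Oo(1)$ along the projection $\pi_1: X\to \PP^{n_1}=\PP^1$ onto the first factor. Since $n_1=1$, one has $h^0(R)=2$; moreover $R$ is globally generated, non-trivial, and $R\ncong L$ because $d_j\ge 1$ for some $j\ge 2$. The task reduces to checking surjectivity of the multiplication map
\[
\mu_R: H^0(R)\otimes H^0(L\otimes R^{-1})\longrightarrow H^0(L),
\]
after which Lemma~\ref{a7} yields $\mathrm{s}_X(u)\le h^0(R)=2$. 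Writing $x_0, x_1$ for a basis of $H^0(R)$ (homogeneous coordinates on the first $\PP^1$), every $u\in H^0(L)$ expands uniquely as $u=\sum_{a+b=d_1}x_0^a x_1^b h_{ab}$ with $h_{ab}\in H^0(\Oo_{\PP^{n_2}}(d_2)\boxtimes\cdots\boxtimes\Oo_{\PP^{n_k}}(d_k))$. Regrouping the $a\ge 1$ terms into $x_0\cdot u_0$ and the single remaining term $x_1^{d_1}h_{0,d_1}$ into $x_1\cdot u_1$ exhibits $u=x_0 u_0+x_1 u_1$ with $u_0, u_1\in H^0(L\otimes R^{-1})$, establishing surjectivity.

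For the generic lower bound I will argue that a general section $u$ is irreducible as a multihomogeneous polynomial, and that irreducibility forbids strength one. Since each $\Oo_{\PP^{n_i}}(d_i)$ is very ample, the box product $L = \boxtimes_i \Oo_{\PP^{n_i}}(d_i)$ is very ample on $X$; as $k\ge 2$ and every $n_i\ge 1$, one has $\dim X\ge 2$. Bertini's theorem then ensures that a general $D\in |L|$ is smooth and irreducible, so the corresponding section $u$ is irreducible. If $\mathrm{s}_X(u)=1$, then $u$ is (up to scalar) a product $x\cdot y$ with $x\in H^0(L_1)$, $y\in H^0(R_1)$, and $L_1, R_1$ non-trivial line bundles on $X$ satisfying $L_1\otimes R_1\cong L$. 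On the multiprojective variety $X$, $\mathrm{Pic}(X)\cong \ZZ^k$ via multidegrees, so non-triviality of $L_1$ and $R_1$ is equivalent to both multidegrees being nonzero; hence $x$ and $y$ are non-constant multihomogeneous polynomials, contradicting irreducibility of $u$. Combining with the upper bound gives $\mathrm{s}_X(u)=2$ for general $u$.

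No step is expected to be a serious obstacle. The hypothesis $n_1=1$ is the decisive one, forcing $h^0(R)=2$ and producing the clean bound from Lemma~\ref{a7}; Bertini supplies the required irreducibility at no cost once $L$ is very ample and $\dim X\ge 2$. The only mild subtlety is unwinding the strength-one definition into an actual polynomial factorization, which is immediate from the fact that an isomorphism $\phi\colon L_1\otimes R_1\to L$ becomes a scalar in terms of concrete sections.
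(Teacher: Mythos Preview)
Your argument is correct. The approach is the same as the paper's in spirit---both take the line bundle $R=\Oo_X(1,0,\dots,0)$ on the first factor---but the mechanisms differ slightly. For the upper bound, the paper invokes Theorem~\ref{x3} (the Koszul-complex tool) and appeals to the required cohomological vanishings, whereas you bypass this machinery and verify the surjectivity hypothesis of Lemma~\ref{a7} by the explicit regrouping $u=x_0u_0+x_1u_1$; your route is more elementary and avoids checking any cohomology. For the lower bound, the paper simply observes that the strength-one locus is a finite union of proper subvarieties of $|L|$, while you justify this concretely via Bertini (very-ampleness of $L$ and $\dim X\ge 2$ give an irreducible general divisor, and irreducibility is incompatible with a non-trivial factorization $u=xy$). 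Both assertions are correct; your version is a touch more self-contained, while the paper's is designed to showcase Theorem~\ref{x3}.
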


\begin{proof}
The first assertion follows from Theorem \ref{x3} taking $M = \Oo_X(1,0,\dots ,0)$: $M$ is globally generated, $h^0(M)=2$
and the required cohomological vanishings hold. For the second assertion, it is sufficient to notice that the set of all $u\in H^0(L)^{\times}$ with $\mathrm{s}_X(u)=1$ is a finite union of proper subvarieties in $|L|$. 
\end{proof}

One can further generalize Proposition \ref{d2} as follows: 

\begin{proposition}\label{d3}
Let $(X,L)$ be a pair where $X = \PP^1\times Y$, $Y$ is a connected projective variety. Assume $L = \Oo_{\PP^1}(d)\boxtimes L_Y$, where $d\geq 1$ and $L_Y$ is a line bundle on $Y$ such that $h^0(L_Y)>0$ and $h^i(L_Y)= 0$ for $i=1,2$. Then $\mathrm{s}_X(u) \le 2$ for all $u\in H^0(L)^{\times}$. 
\begin{proof}
Let $M = \Oo_{\PP^1}(1)\boxtimes \Oo_Y$ and so $M$ is globally generated. Note that $h^0(M) = 2$ by K\"{u}nneth formula and because $Y$ is connected.
Moreover, $h^1(M^{-1}\otimes L) = h^1(M^{-2}\otimes L) = h^2(M^{-2}\otimes L) = 0$ again by K\"{u}nneth formula and because $h^1(L_Y)= h^2(L_Y) = 0$. Now, apply Theorem \ref{x3}.
\end{proof}
\end{proposition}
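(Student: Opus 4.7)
The statement is structured to make Theorem \ref{x3} directly applicable, so the plan is to exhibit an auxiliary line bundle $M$ satisfying the hypotheses of that theorem with $m = 2$. Since we want the bound $\mathrm{s}_X(u)\le 2$ for every nonzero section, the natural choice is the pullback $M = \Oo_{\PP^1}(1)\boxtimes \Oo_Y$ of $\Oo_{\PP^1}(1)$ to the product $X = \PP^1 \times Y$. This sheaf is globally generated because $\Oo_{\PP^1}(1)$ is, and by the K\"unneth formula together with the hypothesis that $Y$ is connected (so $h^0(\Oo_Y)=1$), we get $h^0(M) = h^0(\Oo_{\PP^1}(1))\cdot h^0(\Oo_Y)= 2$. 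Consequently, in Theorem \ref{x3} we have $m = \min\{\dim X+1, h^0(M)\} = 2$.

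Next I would verify the three cohomological vanishings required by Theorem \ref{x3} in the case $m=2$, namely $H^1(M^{-1}\otimes L) = H^1(M^{-2}\otimes L) = H^2(M^{-2}\otimes L) = 0$. Each of these sheaves is a box product $\Oo_{\PP^1}(d-j)\boxtimes L_Y$ for $j \in \{1,2\}$, so K\"unneth reduces every cohomology group to a sum of tensor products of the form $H^a(\Oo_{\PP^1}(d-j))\otimes H^b(L_Y)$ with $a+b$ equal to the index in question. The assumption $h^1(L_Y) = h^2(L_Y) = 0$ kills every term that carries a positive-degree factor from $Y$, so only terms in which $Y$ contributes $H^0(L_Y)$ survive. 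For those remaining terms the $\PP^1$ factor is $H^1(\Oo_{\PP^1}(d-j))$ or $H^2(\Oo_{\PP^1}(d-j))$, which vanish because $\PP^1$ is a curve (so $H^2$ is zero) and because $d-j \ge -1$ when $d\ge 1$ (so $H^1(\Oo_{\PP^1}(d-j))=0$ by the standard computation on $\PP^1$). Hence all three groups vanish.

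With the cohomological hypotheses verified, Theorem \ref{x3} applies and yields $\mathrm{s}_X(u) \le m = 2$ for every $u\in H^0(L)^{\times}$, as desired. The only subtle point is the boundary case $d=1$, where $M^{-2}\otimes L = \Oo_{\PP^1}(-1)\boxtimes L_Y$: here one uses that $H^0(\Oo_{\PP^1}(-1))$ also vanishes, so that even the would-be $H^0\otimes H^1$ and $H^0\otimes H^2$ K\"unneth summands are zero without needing anything beyond the assumptions on $L_Y$. No separate argument is required; the computation is uniform in $d\ge 1$.
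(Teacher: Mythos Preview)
Your proposal is correct and follows exactly the paper's approach: choose $M = \Oo_{\PP^1}(1)\boxtimes \Oo_Y$, compute $h^0(M)=2$ via K\"unneth and connectedness of $Y$, verify the three vanishings $h^1(M^{-1}\otimes L)=h^1(M^{-2}\otimes L)=h^2(M^{-2}\otimes L)=0$ using K\"unneth together with $h^1(L_Y)=h^2(L_Y)=0$, and invoke Theorem~\ref{x3}. Your write-up simply spells out the K\"unneth computation in more detail than the paper does; the boundary remark about $d=1$ is harmless but unnecessary, since the hypotheses $h^1(L_Y)=h^2(L_Y)=0$ already dispose of every relevant summand.
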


\begin{proposition}
Fix integers $d\ge 2$, $n\ge 2$. Let $Y$ be an $(d-2)$-dimensional projective variety and let $A$, $B$ be globally generated line bundles on $Y$ such that $h^0(A\otimes B^{-1})>0$. Let $(X,L)$ be the pair $X = \PP^n\times Y$ and $L = \Oo _{\PP^n}(d)\boxtimes A$. Suppose $M = \Oo _{\PP^n}(1) \boxtimes B$ 
and that $H^k(A\otimes B^{-k}) = H^k(A\otimes B^{-k-1)}) = 0$ for all $1\leq k\leq m := \min\lbrace n+d-1, h^0(M)\rbrace$.  Then $s_X(u) \le m$ for all $u\in H^0(L)^{\times}$.
\begin{proof}
We first check that the vanishing $H^k(M^{-k}\otimes L) = 0$ for $1\leq k\leq m$ holds. K\"{u}nneth formula on the bundle $L\otimes M^{-k}$ 
reads: 
\[
H^k(\Oo_{\PP^n}(d-k)\boxtimes A\otimes B^{-k}) = \bigoplus_{i+j=k} H^i(\Oo_{\PP^n}(d-k))\otimes H^j(A\otimes B^{-k}). 
\]
Note that the right-hand side vanishes whenever $i>0$. Indeed, for $0<i<n$ or $i>n$ is clear. For $i=n$, one has $j = k - n\leq m - n\leq d-1$; therefore
$d-k = d-n-j > -1-n$ is satisfied and so $H^n(\Oo_{\PP^n}(d-k))=0$. For $i = 0$, we have $j=k$ and the vanishing $H^k(A\otimes B^{-k}) = 0$ for $1\leq k\leq m$ holds by assumption. Similarly, one checks that the vanishing $H^k(M^{-k-1}\otimes L) = 0$ holds. Thus the consequence of Theorem \ref{x3} follows. 
\end{proof}
\end{proposition}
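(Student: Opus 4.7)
\medskip

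\noindent\emph{Proof proposal.} The plan is to apply Theorem \ref{x3} with the given $L$ on $X = \PP^n \times Y$ and the auxiliary line bundle $M = \Oo_{\PP^n}(1) \boxtimes B$. Since $\dim X = n + \dim Y = n + d - 2$, the quantity $\min\{\dim X + 1, h^0(M)\}$ appearing in Theorem \ref{x3} becomes $\min\{n+d-1, h^0(M)\}$, which matches the integer $m$ in the statement. Observe first that $M$ is non-trivial and globally generated, since $\Oo_{\PP^n}(1)$ and $B$ both are; moreover, the assumption $h^0(A\otimes B^{-1})>0$, together with $d\ge 2$, ensures via K\"unneth that $h^0(L\otimes M^{-1})>0$, so the multiplication/Koszul argument underlying Theorem \ref{x3} is non-degenerate.

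The core task is to verify the hypothesis $H^k(M^{-k}\otimes L) = H^k(M^{-k-1}\otimes L) = 0$ for $1\le k\le m-1$ and $H^m(M^{-m}\otimes L)=0$. Writing $M^{-\ell}\otimes L = \Oo_{\PP^n}(d-\ell)\boxtimes (A\otimes B^{-\ell})$, the K\"unneth formula gives
\[
H^k(M^{-\ell}\otimes L) \;=\; \bigoplus_{i+j=k} H^i\bigl(\Oo_{\PP^n}(d-\ell)\bigr)\otimes H^j\bigl(A\otimes B^{-\ell}\bigr),
\]
and I would analyse the three possible ranges of $i$ separately. The intermediate range $0<i<n$ vanishes by Bott's theorem on $\PP^n$. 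The case $i=0$ gives the summand $H^0(\Oo_{\PP^n}(d-\ell))\otimes H^k(A\otimes B^{-\ell})$, which vanishes by the hypothesis of the proposition applied at $\ell\in\{k,k+1\}$. The case $i=n$ only arises when $k\ge n$; then $j=k-n\le m-n\le d-1$, and using $\ell\le k+1\le m+1$ one checks that $d-\ell\ge -n$, so $H^n(\Oo_{\PP^n}(d-\ell))=0$ by Serre duality.

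I expect the main technical obstacle to be the boundary bookkeeping: in the tightest case $k=m$, $\ell=m$, the inequality $d-m\ge -n$ barely holds because $m\le n+d-1$, and one must double-check that the summand with $i=n$ indeed vanishes (which is why the proposition caps $m$ at $n+d-1$ rather than allowing arbitrarily large $h^0(M)$). Once all these vanishings are secured, Theorem \ref{x3} directly yields $\mathrm{s}_X(u)\le m$ for every $u\in H^0(L)^\times$, which is the desired conclusion.
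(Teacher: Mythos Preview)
Your proposal is correct and follows essentially the same route as the paper: verify the K\"unneth vanishings $H^k(M^{-\ell}\otimes L)=0$ for $\ell\in\{k,k+1\}$ by splitting into the cases $i=0$, $0<i<n$, and $i=n$, then invoke Theorem~\ref{x3}. Your explicit identification of $m$ with $\min\{\dim X+1,h^0(M)\}$ via $\dim X=n+d-2$, and the remark that $h^0(A\otimes B^{-1})>0$ ensures $h^0(L\otimes M^{-1})>0$, are helpful clarifications that the paper leaves implicit; the only (harmless) looseness is the bound $\ell\le m+1$, whereas in fact $\ell\le m$ suffices in both regimes.
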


Recall that when $X=\PP^n$, $R =\Oo _{\PP^n}(1)$ and $L = \Oo _{\PP^n}(d)$, $d\ge 2$, the $R$-strength of a homogeneous polynomial $u\in H^0(L)^{\times}$ is its slice 
rank. This is related to the maximal dimension of linear spaces that the hypersurface $\lbrace u = 0\rbrace$ contains. In \cite{BO}, Bik and Oneto exploit
this perspective and we refer to their upcoming work for further details and results in that direction. Here we only give two results to show how linear spaces come into the picture. 

\begin{proposition}\label{d14}
Let $X = \PP^2, R =\Oo _{\PP^n}(1)$, and $d\geq 2$. Then: 
\begin{enumerate}
\item[(i)] $\mathrm{s}^R(u) \leq 2$ for all $u\in H^0(\Oo _{\PP^2}(d))^{\times}$;
\item[(ii)] $\mathrm{s}^R(u) = 2$ for a general $u\in H^0(\Oo _{\PP^2}(d))^{\times}$.
\end{enumerate}
\end{proposition}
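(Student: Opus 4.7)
The plan for part (i) is to exploit the concrete geometric meaning of $R$-strength when $R = \Oo_{\PP^2}(1)$. By Definition \ref{mstrength}, the condition $\mathrm{s}^R(u) \le 2$ is equivalent to the existence of a $2$-dimensional subspace $W = \langle \ell_1, \ell_2\rangle \subseteq H^0(\Oo_{\PP^2}(1))$ and forms $g_1, g_2 \in H^0(\Oo_{\PP^2}(d-1))$ with $u = \ell_1 g_1 + \ell_2 g_2$, that is, $u$ lies in the ideal $(\ell_1,\ell_2) \subset \CC[x_0,x_1,x_2]$. The key observation is that $(\ell_1,\ell_2)$ is the saturated homogeneous ideal of the point $p = V(\ell_1) \cap V(\ell_2) \in \PP^2$; after changing coordinates so that $p = [0{:}0{:}1]$, $\ell_1 = x_0$, $\ell_2 = x_1$, a degree $d$ form lies in $(x_0,x_1)$ if and only if its coefficient of $x_2^d$ vanishes, if and only if it vanishes at $p$.

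Given this reformulation, (i) is almost immediate: since $\{u = 0\}\subset \PP^2$ is a projective hypersurface, it is non-empty, so I choose any $p \in \{u=0\}$, pick two independent linear forms $\ell_1,\ell_2$ vanishing at $p$, and conclude $u \in (\ell_1,\ell_2)$, which gives $\mathrm{s}^R(u)\le 2$.

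For (ii) I would argue that $\mathrm{s}^R(u) = 1$ holds precisely when $u = \ell g$ for some linear form $\ell$, equivalently when the curve $\{u=0\}$ has a line as a component. The locus of such $u$ in $|\Oo_{\PP^2}(d)|$ is the image of the Segre-type morphism $\PP^2 \times |\Oo_{\PP^2}(d-1)| \to |\Oo_{\PP^2}(d)|$ induced by multiplication of sections, hence an irreducible closed subvariety of projective dimension $2 + \left(\binom{d+1}{2} - 1\right) = \binom{d+1}{2} + 1$. Comparing with $\dim |\Oo_{\PP^2}(d)| = \binom{d+2}{2} - 1$, the difference equals $\binom{d+2}{2} - \binom{d+1}{2} - 2 = d - 1$, which is positive for $d \ge 2$. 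Hence the locus of $R$-strength one is a proper closed subvariety, so a general $u$ has $\mathrm{s}^R(u) \ge 2$; combined with (i) this gives $\mathrm{s}^R(u) = 2$.

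There is no real obstacle here; the only mild subtlety is recognising that, for a $2$-dimensional subspace $W \subset H^0(\Oo_{\PP^2}(1))$, the condition $u \in \mathrm{Im}(\mu_{W,L})$ collapses to a single linear condition (vanishing of $u$ at the unique base point of $W$). This is precisely why the expected upper bound $n+1 = 3$ coming from Theorem \ref{x3} can be improved to $2$ in the plane, and why the same strategy would not work on $\PP^n$ for $n \ge 3$ (where two linear forms cut out a codimension-two linear subspace, and a single point of vanishing no longer suffices to place $u$ in their ideal).
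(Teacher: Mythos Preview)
Your proof is correct and follows essentially the same approach as the paper: pick a point $p\in\{u=0\}$ and show that $u$ lies in the homogeneous ideal of $p$, hence is a combination of two linear forms. The paper phrases this step via the Koszul complex of $\{x_0,x_1\}$ and the vanishing $h^1(\Oo_{\PP^2}(d-2))=0$ (in the spirit of Theorem~\ref{x3}), while you give the equivalent elementary monomial argument; for (ii) the paper just observes that an irreducible $u$ has $\mathrm{s}(u)>1$, which is the same content as your dimension count on the locus of forms with a linear factor.
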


\begin{proof}
An irreducible $u\in H^0(\Oo _{\PP^2}(d))^{\times}$ has strength $s(u)>1$. Thus it is sufficient to verify (i). 

Fix $u\in H^0(\Oo _{\PP^2}(d))^{\times}$ and take $p\in \{u=0\}$. Choose a system of homogeneous coordinates $x_0$, $x_1$,
$x_2$ of $\PP^2$ such that $p = (0:0:1)$. To show the statement, it is enough to find degree $d-1$ homogeneous polynomials $u_0$ and $u_1$ such that $u = x_0u_0+x_1u_1$. 

Consider the Koszul complex of the regular sequence $\lbrace x_0,x_1\rbrace$ and twist it by $\Oo _{\PP^2}(d)$. One 
obtains an exact sequence: 
\[
0 \to  \Oo _{\PP^2}(d-2) \longrightarrow \Oo _{\PP^2}(d-1)^{\oplus 2} \longrightarrow \Ii_p(d)\to 0.
\]
Using $h^1(\Oo _{\PP^2}(d-2))=0$, we find that the induced map $H^0(\Oo_{\PP^2}(d-1)^{\oplus 2})\rightarrow H^0(\Ii_p(d))$ is surjective;
this yields that $u = x_0u_0 + x_1u_1$ for some $u_0, u_1\in H^0(\Oo _{\PP^2}(d-1))$. 
\end{proof}

\begin{theorem}\label{d10}
Let $R = \Oo_{\PP^n}(1)$ and $L = \Oo_{\PP^n}(d)$. 
\begin{enumerate}
\item[(i)] Let $m\geq 1$ such that $(n-m)(m+1) < \binom{m+d}{m}$. Then $\mathrm{s}^R(u)\ge n-m+1$ for a general $u\in H^0(L)$ of degree $d\ge 4$; 

\item[(ii)] Let $e$ be the maximal dimension of a linear subspace in $\PP^n$ contained in a general degree $d$
hypersurface $u\in H^0(L)$. Then $\mathrm{s}^R(u)\le n-e$ for all $u\in H^0(L)^{\times}$ and equality holds for a general $u$. 

\end{enumerate}
\end{theorem}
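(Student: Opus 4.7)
The plan is to exploit a simple dictionary between $R$-strength (with $R = \Oo_{\PP^n}(1)$) and the existence of linear subspaces on the hypersurface $\{u=0\}$. First I would establish the equivalence: for $u\in H^0(\Oo_{\PP^n}(d))^\times$ and an integer $r\geq 1$, one has $\mathrm{s}^R(u) \le r$ if and only if $\{u=0\}\subset \PP^n$ contains a linear subspace of dimension $n-r$. The ``only if'' direction is immediate: given a decomposition $u = \sum_{i=1}^r x_i u_i$ with $x_i \in H^0(R)$, after absorbing any linear dependence among the $x_i$ into the $u_i$ one may assume the $x_i$ are linearly independent, and then the linear subspace $\{x_1 = \cdots = x_r = 0\}$ has dimension $n-r$ and is contained in $\{u=0\}$. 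The ``if'' direction is a coordinate change: if $\Lambda \subset \{u=0\}$ has dimension $n-r$, pick linear forms $x_1,\ldots,x_r$ cutting out $\Lambda$; then $u$ lies in the homogeneous ideal $(x_1,\ldots,x_r)$ and therefore admits a decomposition $u = \sum_{i=1}^r x_i u_i$ with $u_i\in H^0(\Oo_{\PP^n}(d-1))$.

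With the dictionary in place, both parts reduce to the classical incidence calculation for $m$-planes on degree $d$ hypersurfaces. I would introduce
\[
I_m \;=\; \{(u, \Lambda) \in |L| \times G(m,n) \ : \ \Lambda \subset \{u=0\}\},
\]
and note that the second projection exhibits $I_m$ as a projective bundle over $G(m,n)$: the fiber over $\Lambda$ is the sub-linear-system of hypersurfaces in $|L|$ containing $\Lambda$, which has codimension $h^0(\Oo_\Lambda(d)) = \binom{m+d}{m}$ in $|L|$. Hence
\[
\dim I_m \;=\; \dim |L| + (n-m)(m+1) - \binom{m+d}{m}.
\]
The first projection $\pi_m : I_m \to |L|$ is proper since $G(m,n)$ is, so $\pi_m(I_m)$ is closed in $|L|$, and dominance of $\pi_m$ is equivalent to $\dim I_m \geq \dim |L|$, i.e.\ $(n-m)(m+1) \geq \binom{m+d}{m}$.

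For (i), the hypothesis $(n-m)(m+1) < \binom{m+d}{m}$ gives $\dim I_m < \dim |L|$, so $\pi_m$ fails to be dominant; a general $u$ therefore contains no $m$-plane, and the dictionary then yields $\mathrm{s}^R(u) > n-m$, i.e., $\mathrm{s}^R(u)\ge n-m+1$. For (ii), the integer $e$ is characterised by the statement that $\pi_e$ is dominant while $\pi_{e+1}$ is not. Since $\pi_e(I_e)$ is a closed subset of the irreducible variety $|L|$ containing a non-empty Zariski open set, it equals $|L|$; thus \emph{every} $u\in H^0(L)^\times$ has $\{u=0\}$ containing an $e$-plane, whence $\mathrm{s}^R(u)\le n-e$. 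Conversely, if a general $u$ had $\mathrm{s}^R(u)\le n-e-1$, the dictionary would place an $(e+1)$-plane in the hypersurface, contradicting the maximality of $e$; hence equality for general $u$.

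No serious obstacle is anticipated: the argument rests only on the dictionary (elementary linear algebra plus a change of coordinates) and the standard incidence-variety dimension count for Fano schemes of hypersurfaces. The hypothesis $d\ge 4$ in part (i) plays no role in the above reasoning; it is presumably inserted for compatibility with companion statements (e.g.\ sharper results about the Fano scheme of a general hypersurface) rather than being needed for the bound itself.
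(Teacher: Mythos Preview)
Your proposal is correct and follows essentially the same route as the paper: both arguments rest on the dictionary between $R$-strength (for $R=\Oo_{\PP^n}(1)$) and the existence of linear subspaces in $\{u=0\}$, combined with the standard incidence-variety dimension count $\dim \Gamma_m = \dim |L| + (n-m)(m+1) - \binom{m+d}{m}$ and the completeness of the Grassmannian to pass from ``general'' to ``all'' in part (ii). Your treatment is in fact slightly more explicit than the paper's in spelling out both directions of the dictionary, and your remark that the hypothesis $d\ge 4$ is not actually used in the argument is correct.
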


\begin{proof}
(i). For any integer $1\le m <n$, let $\mathbb G(m,n)$ denote the Grassmannian of all $m$-dimensional linear projective subspaces of
$\PP^n$. For any linear space $W\in \mathbb G(m,n)$, one has $h^0(\Oo _W(d)) =\binom{m+d}{m}$ and the restriction map $H^0(L) \to H^0(\Oo_W(d))$ is surjective. Thus $h^0(\Ii _W(d)) = \binom{n+d}{n} -\binom{m+d}{m}$. Define:
\[
\Gamma _m = \{(W,D)\in \mathbb G(m,n)\times |L|\mbox{ such that } W\subset D\}.
\]
Hence $\Gamma _m$ is an irreducible projective variety of dimension $(n-m)(m+1) + \binom{n+d}{n}-\binom{m+d}{m}$. If $(n-m)(m+1) < \binom{m+d}{m}$, i.e., the map $\pi_m: \Gamma_m \to |L|$ given by (the
restriction of) the projection onto the second factor is not dominant, i.e. a general $D\in |L|$ contains no $m$-dimensional linear subspaces of $\PP^n$. Write $D =\{u=0\}$ and $u = \ell_1b_1+\cdots +\ell_rb_r$, where the $\ell_i$ are linear forms. Since $D$ contains no $m$-dimensional linear subspace, we have $\mathrm{s}^R(u) = r \ge n-m+1$. 

(ii). The proof of statement (i) gives $\mathrm{s}^R(u)\ge n+1-(e+1) = n-e$ for a general $u$. Thus it is sufficient to prove the upper bound. Since the Grassmannian $\mathbb G(e,n)$ is complete, every $D\in |\Oo_{\PP^n}(d)|$ contains an $e$-dimensional linear subspace $W$. The $n-e$ linear equations defining $W$ tensored with $L$ give the equation of every such $D \in |\Ii_W(d)|$ written as an $R$-strength decomposition. 
\end{proof}

\begin{remark}\label{d21}
Along the same lines, one shows the following. Let $D\in |\Oo _{\PP^n}(d)|$ and let $u$ be a corresponding section. Let $c$ be the dimension of a complete intersection $Z\subset D$ of $n-c$ hypersurfaces of degrees $<d$. Then $\mathrm{s}(u)\le n-c$.
\end{remark}

\begin{definition}
Let $(X,L)$ be a pair. For a given $u\in H^0(L)^{\times}$ of finite strength (if exists), let $\mathcal S(u)$ denote the set of strength decompositions \eqref{strengthdec} of $u$, up to permuting the summands. 
\end{definition}

\begin{remark}\label{cc2}
Suppose $d\ge 4$. Consider a section $u\in  H^0(\Oo _{\PP^2}(d))^{\times}$ of the form 
\[
u = u_if_{d-i}+v_jg_{d-j}, \ \  i,j\leq \lfloor d/2\rfloor, 
\]
where $u_i$ and $v_j$ are homogeneous polynomials of degrees $i$ and $j$ respectively, with the property that 
their vanishing locus $Z = \{u_i=v_j=0\}$ is a zero-dimensional scheme. 

Since $\dim Z=0$, the sections $u_i$ and $v_j$ form a regular sequence. Thus the Koszul complex gives the exact sequence: 
\begin{equation}\label{eqcc1}
0\to \Oo _{\PP^2}(d-i-j) \to \Oo _{\PP^2}(d-i)\oplus \Oo _{\PP^2}(d-j)\to \Ii _Z(d)\to 0. 
\end{equation}

Note that, since $d\ge i+j-2$, one has $h^2(\Oo _{\PP^2}(d-i-j))= 0$. Moreover, $h^1(\Oo _{\PP^2}(t))=0$ for all $t\in \ZZ$. 

Conversely, fix any complete intersection $Z$ defined by the vanishing of two homogeneous polynomials of degrees $i$ and $j$. By the vanishings above, the resolution (\ref{eqcc1}) gives $h^1(\Ii_Z(d))=0$, i.e. $\dim |\Ii _Z(d)|=\dim |\Oo _{\PP^2}(d)|-ij$. Therefore, for a complete intersection $Z$ defined in degrees $i$ and $j$, the algebraic set of all $D\in |\Oo _{\PP^2}(d)|$ such that $Z\subset D$ has dimension $\dim |\Oo _{\PP^2}(d)|-ij$.

Let $\mathcal Z_{i,j}$ be the set of all  zero-dimensional schemes that are the complete intersection defined in degrees $i$ and $j$. 
The algebraic set $\mathcal Z_{i,j}$ is irreducible and we determine its dimension. By symmetry, we may assume $j\geq i$. 

First, consider the case $j=i$. The Grassmannian of lines in the projective space $|\Oo _{\PP^2}(i)|$ has
dimension $2(\binom{i+2}{2}-2)$. Thus 
\[
\dim Z_{i,i}=(i+2)(i+1)-4 =i^2+3i-2. 
\]
Now, suppose $j>i$. For any $D\in |\Oo _{\PP^2}(i)|$, one has $h^0(\PP^2,\Ii _D(j)) = h^0(\PP^2,\Oo
_{\PP^2}(j-i)) =\binom{j-i+2}{2}$. Thus, looking at the fiber over a general point $D\in |\Oo _{\PP^2}(i)|$,
one derives: 
\[
\dim \mathcal Z_{i,j} =\binom{i+2}{2}+\binom{j+2}{2} -\binom{j-i+2}{2}-1.
\] 
(Note that $\dim \mathcal Z_{i,i+1} >i(i+1)$. Inducting on $j$, we see that $\dim \mathcal Z_{i,j}>ij$ for all $j$.)

Consider the incidence set
\[
\Gamma _{i,j} = \{(Z,D)\in
\mathcal Z_{i,j}\times |\Oo _{\PP^2}(d)|\mid Z\subset D\}.
\]

Let $\pi _1: \Gamma _{i,j}\to \mathcal Z_{i,j}$ and $\pi _2: \Gamma _{i,j}\to |\Oo _{\PP^2}(d)|$ be the two projections onto the factors. 
We have shown that each fiber of $\pi _1$ has dimension $\binom{d+2}{2}-ij-1$. Thus $\Gamma _{i,j}$ is irreducible of dimension
$\binom{d+2}{2}-ij-1 + \dim \mathcal Z_{i,j} > \binom{d+2}{2}-1$. 
\end{remark}

\begin{theorem}\label{setdecomp}
Let $u\in H^0(\Oo_{\PP^2}(d))^{\times}$ be such that $D = \{u=0\}$ is integral. Then the section $u$ may be written as $u = u_if_{d-i}
+ v_jg_{f-j}$, where $\lbrace u_i, v_j\rbrace$ is a regular sequence, if and only if $D\in \pi_2(\Gamma_{i,j})= F(i,j;2)$. If $D\in F(i,j;2)$, then $\Ss (u)$ has dimension $\dim \mathcal Z_{i,j}-ij$.  
\begin{proof}
The first statement is clear from definitions. The dimension of the set $\Ss(u)$ of all decompositions of $u$ follows from the computations in Remark \ref{cc2}. 
\end{proof}
\end{theorem}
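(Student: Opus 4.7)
My plan is to handle the biconditional and the dimension count separately, both relying on the Koszul resolution and the incidence variety $\Gamma_{i,j}$ from Remark \ref{cc2}.

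For the biconditional, the forward direction is tautological: given $u=u_if_{d-i}+v_jg_{d-j}$ with $\{u_i,v_j\}$ a regular sequence, the zero-dimensional scheme $Z=\{u_i=v_j=0\}$ lies in $\mathcal{Z}_{i,j}$ and is contained in $D$, so $(Z,D)\in\Gamma_{i,j}$ and $D\in F(i,j;2)$. Conversely, if $D\in F(i,j;2)$, pick $(Z,D)\in\Gamma_{i,j}$ and let $u_i,v_j$ be generators of $I_Z$. Twisting the Koszul resolution \eqref{eqcc1} by $\Oo_{\PP^2}(d)$ and invoking the vanishing $h^1(\Oo_{\PP^2}(d-i-j))=0$ (automatic as $i+j\le d$) yields the surjection
\[
H^0(\Oo_{\PP^2}(d-i))\oplus H^0(\Oo_{\PP^2}(d-j))\twoheadrightarrow H^0(\Ii_Z(d)),
\]
and $u\in H^0(\Ii_Z(d))$ lifts to the required decomposition.

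For the dimension, the key observation is the following identification. Each decomposition $u=u_if_{d-i}+v_jg_{d-j}$ canonically produces the complete intersection $Z=\{u_i=v_j=0\}\in\mathcal{Z}_{i,j}$ contained in $D$, and, up to the natural equivalences on a decomposition (rescalings of $(u_i,v_j)$ with compensating rescalings of $(f_{d-i},g_{d-j})$; the $\mathrm{GL}_2$ change of basis on $I_Z$ when $i=j$; and modification of $(f_{d-i},g_{d-j})$ by a Koszul syzygy coming from the kernel $H^0(\Oo_{\PP^2}(d-i-j))$), the decomposition is completely determined by $Z$. Thus $\mathcal{S}(u)$ is in canonical bijection with the fiber $\pi_2^{-1}(D)\subset\Gamma_{i,j}$, namely with the set of $Z\in\mathcal{Z}_{i,j}$ such that $Z\subset D$.

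By Remark \ref{cc2}, $\Gamma_{i,j}$ is irreducible of dimension $\dim\Gamma_{i,j}=\binom{d+2}{2}-1+(\dim\mathcal{Z}_{i,j}-ij)$, and $\pi_2$ surjects onto $F(i,j;2)$ by construction. The fiber dimension theorem then gives $\dim\pi_2^{-1}(D)=\dim\Gamma_{i,j}-\dim F(i,j;2)=\dim\mathcal{Z}_{i,j}-ij$ for $D$ in a dense open subset of $F(i,j;2)$, provided $\dim F(i,j;2)=\binom{d+2}{2}-1$, i.e.\ that $F(i,j;2)$ attains the maximal possible dimension inside the ambient linear system. The main obstacle is precisely this last equality together with transferring the generic fiber dimension to the specific fiber over $D$: the integrality hypothesis on $D$ is the genericity condition which excludes the closed loci where the fiber could jump (non-reduced $D$ or $D$ sharing low-degree components with the chosen $u_i$), so upper semicontinuity and the explicit incidence count force $\dim\pi_2^{-1}(D)=\dim\mathcal{Z}_{i,j}-ij$, as claimed.
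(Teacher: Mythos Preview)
Your approach is essentially the same as the paper's: both reduce the dimension claim to the incidence correspondence $\Gamma_{i,j}$ set up in Remark~\ref{cc2} and read off the answer from the dimension count there. The paper's proof is a two-line pointer to that remark; you have written out what that pointer means, including the identification of $\mathcal{S}(u)$ (modulo the natural equivalences you list) with the fibre $\pi_2^{-1}(D)$.

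You correctly isolate the one step that is not explicit in either argument: to get $\dim\pi_2^{-1}(D)=\dim\mathcal{Z}_{i,j}-ij$ from the formula $\dim\Gamma_{i,j}=\binom{d+2}{2}-1+\dim\mathcal{Z}_{i,j}-ij$ one needs $\pi_2$ dominant \emph{and} one needs the given integral $D$ to lie in the locus where the fibre dimension is generic. Your closing sentence (``integrality is the genericity condition which excludes the closed loci where the fibre could jump'') is not a proof of either fact---integrality alone does not force $D$ into the open stratum with minimal fibre dimension, and dominance of $\pi_2$ is not established in Remark~\ref{cc2} (only the inequality $\dim\Gamma_{i,j}>\dim|\Oo_{\PP^2}(d)|$ is recorded there). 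That said, the paper's own proof does not address this point either; you have simply made visible an assumption the paper leaves tacit.
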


\section{Strength on projective spaces and complete intersections}\label{strengthcompleteintersections}

In this section, we consider the pair $(X,L)$ where $X = \PP^n$ and $L = \Oo_{\PP^n}(d)$ with $d\geq 4$. A nonzero homogeneous polynomial $f\in H^0(L)$ such that 
\[
f=\sum _{h=1}^{k} f_hg_h, \mbox{ where } deg (f_h) =i_h\leq \lfloor d/2\rfloor, \mbox{ for some } g_h\in H^0(\Oo_{\PP^n}(d-i_h)),
\] 
is said to possess a {\it strength decomposition} of type $(i_1,\dots ,i_k)\in \NN^{k}$. 

\begin{definition}\label{defE}
Let $E(i_1,\dots ,i_k; n)$ be the set of all $Y\in |L|$ whose corresponding section $f\in H^0(L)$ has a strength decomposition of type $(i_1,\dots ,i_k)$ (not necessarily minimal); let $\overline{E(i_1,\dots, i_k;n)}$ denotes its Zariski closure in $|L|$.  Each $\lbrace f = 0\rbrace = Y\in |L|$ with $\mathrm{s}(f)=k$ is contained in at least one $E(i_1,\dots, i_k; n)$. 

The constructible set of all $\lbrace f = 0 \rbrace = Y\in |L|$ with $\mathrm{s}(f)=k$ is denoted $\mathcal S_k(d,n)$. Since the possible $k$-tuples $(i_1,\ldots, i_k)$ such that $1\leq i_h\leq \lfloor d/2\rfloor$ for $h\in \lbrace 1,\ldots, k\rbrace$ are $\binom{\lfloor d/2\rfloor +k-1}{k}$, the number of irreducible components of $\mathcal S_k(d,n)$ is at most that quantity. 
\end{definition}

\begin{definition}\label{defF}
Let $F(i_1,\dots ,i_k;n)$ be the set of all $Y\in |L|$ whose corresponding section $f\in H^0(L)$ admits a decomposition $f=\sum _{h=1}^{k} f_hg_h$, where $\deg (f_h)=i_h$ for all $h$, and $\lbrace f_1,\dots ,f_k\rbrace$ is a regular sequence. (Equivalently, the scheme $Z = \{f_1=\cdots =f_k=0\}$ has dimension $n-k$ and called a complete intersection.) Let $\overline{F(i_1,\dots, i_k; n)}$ be the Zariski closure of $F(i_1,\dots ,i_k; n)$ in $|L|$. 
\end{definition}

\begin{remark}\label{ooo1}
Fix positive integers $i_1\le \cdots \le i_k\leq \lfloor d/2\rfloor$. Let $\mathcal{Z}_{i_1,\ldots,i_k}$ denote the set of all complete intersections $C\subset \PP^n$ of $k$ hypersurfaces of degrees $i_1,\dots ,i_k$. The set $\mathcal{Z}_{i_1,\ldots,i_k}$ is an irreducible variety.  For any $C\in \mathcal{Z}_{i_1,\ldots,i_k}$, the Koszul complex associated to the $k$ generators of the homogeneous ideal of $C$ gives $h^i(\Ii _C(d)) =0$ for all $i>0$. Thus $h^0(\Ii _C(d)) =\binom{n+d}{n} -h^0(\Oo _C(d))$ and the integer $h^0(\Oo _C(d))$ is the evaluation at the integer $d$ of the Hilbert polynomial $p_C(t)$ of $C$. Since $\mathcal{Z}_{i_1,\ldots,i_k}$ is irreducible, $p_A(t) =p_B(t)$ for all $A, B\in \mathcal{Z}_{i_1,\ldots,i_k}$.  

Hence the integer $\dim |\Ii _A(d)|$ is the same for all $A\in \mathcal{Z}_{i_1,\ldots,i_k}$. Recall $L = \Oo _{\PP^n}(d)$ and define 
\[
\Gamma_{i_1,\ldots,i_k} = \{(Z,D)\in  \mathcal{Z}_{i_1,\ldots,i_k}\times |L|\mbox{ such that } Z\subset D\}.
\]
Since $\mathcal{Z}_{i_1,\ldots,i_k}$ is irreducible and $h^0(\Ii _A(d)) =h^0(\Ii _B(d))$ for all $A, B\in \mathcal{Z}_{i_1,\ldots,i_k}$, $\Gamma_{i_1,\ldots,i_k} $ is irreducible, because it is a projective bundle on an irreducible base. Thus the image $F(i_1,\dots ,i_k;n)$ of $\Gamma_{i_1,\ldots,i_k} $ in $|L|$ by the projection $\mathcal{Z}_{i_1,\ldots,i_k}\times |L|\rightarrow |L|$ is irreducible. 
\end{remark}

\begin{remark}
Let $\lbrace f_1,\dots ,f_k\rbrace$ be a regular sequence and let $Z = \{f_1=\cdots =f_k=0\}$ be its defining complete intersection. Since the homogeneous ideal of $Z$ is generated by $f_1,\dots ,f_k$, the subset of all $\lbrace f= 0\rbrace \in F(i_1,\dots ,i_k; n)$ of degree $d$ with a decomposition $f  =\sum _{h=1}^{k} f_hg_h$ for some $g_h$ is the linear system $|\Ii _Z(d)|$.
\end{remark}

\begin{remark}\label{a1a1} 
Recall $d\geq 4$ is fixed. Let $\lbrace u=0\rbrace\in F(i_1,\dots ,i_k;n)$, where all the indices satisfy $i_h=1$. That means $u =\sum _{h=1}^{k} f_hg_h$ where $f_i$ are linear forms and where $Z = \{f_1=\cdots =f_k=0\}$ is then an $(n-k)$-dimensional linear subspace. Thus $\{u=0\}\supset Z$. The Koszul complex of the regular sequence $f_1,\dots ,f_k$ shows that the converse holds, i.e. if $v$ is a degree $d$ homogeneous polynomial and $\{v=0\}\supset Z$, then $\lbrace v= 0\rbrace \in F(i_1,\dots ,i_k;n)$. Since the Grassmannian of all $(n-k)$-dimensional linear subspaces of $\PP^n$ is a complete variety, each $F(1,\dots ,1;n)$ is closed in $|L|$.  
\end{remark}

\begin{theorem}\label{do1}
Let $d\geq 4$. Fix integers $k\ge 2$, $n \ge k+3$ and $\binom{n-k-2+d}{n-k} \ge k(n-k+1)$. Set $i_j:= 1$ for $1\le j\le k-1$, and $i_k:= 2$.
Let $(j_1,\dots ,j_s)$ be a multi-index with $s\le k$ and $j_h=1$ for all $h=1,\dots ,s$. Then $F(i_1,\dots ,i_k;n)\nsubseteq
\overline{E(j_1,\dots ,j_s;n)}$.
\end{theorem}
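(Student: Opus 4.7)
My first step is to reduce to the case $s=k$. By Remark \ref{a1a1}, $\overline{E(\underbrace{1,\ldots,1}_{s};n)}$ coincides with the closed locus of degree $d$ hypersurfaces in $\PP^n$ containing an $(n-s)$-dimensional linear subspace, and every $(n-s)$-plane contains an $(n-k)$-plane when $s \leq k$. Thus it suffices to prove $F := F(i_1,\ldots,i_k;n) \not\subseteq F(\underbrace{1,\ldots,1}_k;n) = \overline{E(\underbrace{1,\ldots,1}_k;n)}$. By the irreducibility of $F$ (Remark \ref{ooo1}), this in turn reduces to showing that a generic $Y \in F$ contains no $(n-k)$-dimensional linear subspace.

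I would then fix a generic complete intersection $Z = V(\ell_1,\ldots,\ell_{k-1},q) \in \mathcal{Z}_{i_1,\ldots,i_k}$, with $\Lambda := V(\ell_1,\ldots,\ell_{k-1}) \cong \PP^{n-k+1}$ and $Q := V(q) \cap \Lambda$ a smooth quadric hypersurface of $\Lambda$. Since $F$ is swept out by the linear systems $|\Ii_Z(d)|$, it is enough to show that the incidence
\[
\mathcal{I}_Z := \{(M, Y) \in \mathbb{G}(n-k,n) \times |\Ii_Z(d)| : M \subseteq Y\}
\]
does not dominate the second factor. I would stratify $\mathbb{G}(n-k,n)$ by $\dim(M \cap \Lambda)$; by Mayer--Vietoris applied to the ideal sheaves of $Z$ and $M$, the codimension of $|\Ii_{Z \cup M}(d)|$ in $|\Ii_Z(d)|$ equals $h^0(\Oo_M(d)) - h^0(\Oo_{M \cap Z}(d))$ for each $M$.

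The critical estimate is for the generic stratum, of dimension $k(n-k+1)$, where $M \cap \Lambda$ has the minimal dimension $n-2k+1$. There one has the strict inclusion $M \cap Z = M \cap \Lambda \cap V(q) \subsetneq M \cap V(q)$ of subschemes of $M$, and $M \cap V(q)$ is a quadric hypersurface in $M \cong \PP^{n-k}$. A direct monotonicity argument for the Hilbert polynomial of a quadric hypersurface in projective space (strictly increasing in the ambient dimension for fixed $d \ge 2$ once $k \ge 2$) yields
\[
h^0(\Oo_{M \cap Z}(d)) < h^0(\Oo_{M \cap V(q)}(d)) = \binom{n-k+d}{n-k} - \binom{n-k+d-2}{n-k}.
\]
By the hypothesis $\binom{n-k+d-2}{n-k} \ge k(n-k+1)$, the codimension of $|\Ii_{Z \cup M}(d)|$ in $|\Ii_Z(d)|$ therefore strictly exceeds $\dim \mathbb{G}(n-k,n)$, so this stratum's contribution to $\dim \mathcal{I}_Z$ is strictly less than $\dim|\Ii_Z(d)|$. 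The remaining strata, up to the extreme closed stratum $\{M \subseteq \Lambda\}$ of dimension $n-k+1$ (where the fiber codimension equals exactly the Koszul number $\binom{n-k+d-2}{n-k}$, cut out by $g|_M = 0$ after writing $f = \sum \ell_i h_i + qg$), satisfy analogous inequalities and are easier to handle because their dimension in $\mathbb{G}(n-k,n)$ drops faster than the codimension does.

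The main obstacle will be the uniform bookkeeping across the intermediate strata $\{M : \dim(M \cap \Lambda) = t\}$ for $n-2k+1 < t < n-k$, where one must combine the Mayer--Vietoris codimension with the strict comparison of Hilbert polynomials of quadric hypersurfaces in ambient projective spaces of different dimensions. The precise form of the hypothesis is calibrated so that the generic stratum, of maximal dimension in $\mathbb{G}(n-k,n)$, yields the binding estimate; the remaining strata follow by direct binomial comparison.
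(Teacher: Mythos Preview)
Your reduction to $s=k$ and the overall incidence-variety strategy match the paper's setup. The serious problem is the Mayer--Vietoris step. From the exact sequence
\[
0 \to \Ii_{Z\cup M}(d) \to \Ii_Z(d) \to \Ii_{Z\cap M,M}(d) \to 0
\]
(or equivalently from $0\to\Ii_{Z\cup M}\to\Ii_Z\oplus\Ii_M\to\Ii_{Z\cap M}\to 0$), one obtains only
\[
h^0(\Ii_Z(d)) - h^0(\Ii_{Z\cup M}(d)) \;\le\; h^0(\Ii_{Z\cap M,M}(d)),
\]
i.e.\ an \emph{upper} bound on the codimension. Your dimension count needs a \emph{lower} bound. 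Equality, and hence the formula $\mathrm{codim}=h^0(\Oo_M(d))-h^0(\Oo_{M\cap Z}(d))$ that you use, requires $h^1(\Ii_{Z\cup M}(d))=0$; since $h^1(\Ii_Z(d))=h^1(\Ii_M(d))=0$, the long exact sequence identifies $h^1(\Ii_{Z\cup M}(d))$ with the cokernel of $H^0(\Ii_Z(d))\oplus H^0(\Ii_M(d))\to H^0(\Ii_{Z\cap M}(d))$, and you give no reason why this vanishes for every $M$, nor even for generic $M$. Without this vanishing the whole count collapses: a large $h^1$ could make $|\Ii_{Z\cup M}(d)|$ much bigger than your formula predicts.

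The paper sidesteps this by using, for each $B\in\mathbb G(n-k,n)$, the \emph{residual} exact sequence with respect to a general hyperplane $H\supset B$ (with $\langle A\rangle\not\subset H$):
\[
0 \to \Ii_A(d-1) \to \Ii_{A\cup B}(d) \to \Ii_{(A\cap H)\cup B,\,H}(d) \to 0,
\]
which yields the inequality in the correct direction, $h^0(\Ii_{A\cup B}(d))\le h^0(\Ii_A(d-1))+h^0(H,\Ii_{(A\cap H)\cup B,H}(d))$. A single further observation---that $\Ii_{B,H}(d)$ is globally generated and $A\cap H\not\subset B$, so $h^0(H,\Ii_{(A\cap H)\cup B,H}(d))<h^0(H,\Ii_{B,H}(d))$---then produces exactly $h^0(\Ii_{A\cup B}(d))<h^0(\Ii_A(d))-\binom{n-k-2+d}{n-k}$ for \emph{every} $B$, with no stratification required. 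Your handling of the non-generic strata (``easier to handle because their dimension drops faster'') is likewise only a sketch; note in particular that the extreme case $M\subset\langle Z\rangle$ is where the paper invokes the hypothesis $n\ge k+3$ (a smooth quadric of rank $n-k+2$ cannot contain an $(n-k)$-plane), which does not appear in your argument.
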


\begin{proof}
By Remark \ref{a1a1}, it is sufficient to prove that $F(i_1,\dots ,i_k;n)\nsubseteq F(j_1,\dots ,j_k;n)$, i.e. some hypersurface $X \in F(i_1,\dots ,i_k;n)$, containing
a codimension $k$ complete intersection quadric, does not contain any $(n-k)$-dimensional linear subspace of $\PP^n$. Let $\mathbb G(n-k,n)$ denote the Grassmannian of all $(n-k)$-dimensional linear subspaces of $\PP^n$. (Recall that the variety $\mathbb G(n-k,n)$ has dimension $k(n-k+1)$.)

Let $A$ be a smooth and irreducible complete intersection defined by $k-1$ linear forms and one quadratic form. Then $A$ is a quadric hypersurface of the $(n-k+1)$-dimensional linear space $\langle A\rangle$. To prove the assertion, it is sufficient to prove that a general $D\in |\Ii _A(d)|$ contains no element of $\mathbb G(n-k,n)$.  Since $A$ is a complete intersection, we have $h^1(\Ii _A(d)) =0$ and hence $h^0(\Ii_A(d)) =\binom{n+d}{n}-h^0(\Oo _A(d)) =\binom{n+d}{n}-\binom{n-k+1+d}{n-k+1} +\binom{n-k-1+d}{n-k+1}$. 
If we showed that $h^0(\Ii _{A\cup B}(d)) \le h^0(\Ii _A(d)) -k(n-k+1) -1$ for every $B\in \mathbb G(n-k,n)$, then the set of all $W\in |\Ii _A(d)|$ containing a prescribed $B\in \mathbb G(n-k,n)$ would have codimension $>k(n-k+1)$ in $|\Ii _A(d)|$. Thus $\dim \cup _{B\cup \mathbb G(n-k,n)} |\Ii _{A\cup B}(d)| < \dim |\Ii _A(d))|$, and hence a general $D\in |\Ii _A(d)|$ would not be in $F(j_1,\dots ,j_k;n)$.

Fix $B\in \mathbb G(n-k,n)$. Fix a general hyperplane $H$ of $\PP^n$ containing $B$. Since $A\nsubseteq H$
and $A$ is irreducible, the residual exact sequence of $H$ gives the following exact sequence:
\begin{equation}\label{eqa1}
0 \to \Ii _A(d-1) \to \Ii _{A\cup B}(d) \to \Ii _{(A\cap H)\cup B,H}(d)\to 0.
\end{equation}
Since $A$ is a complete intersection, we have $h^1(\Ii _A(d-1)) =0$. Thus $h^0(\Ii _A(d-1))=\binom{n+d-1}{n}-h^0(\Oo _A(d-1))
=\binom{n+d-1}{n}-\binom{n-k+d}{n-k+1} +\binom{n-k-2+d}{n-k+1}$. So  
\[
h^0(\Ii _A(d)) -h^0(\Ii _A(d-1))
=\binom{n+d-1}{n-1} -\binom{n-k+d}{n-k}+\binom{n-k-2+d}{n-k},
\]
because one uses the identity $\binom{p}{q}-\binom{p-1}{q}=\binom{p-1}{q-1}$. 

Since $B$ is a complete intersection in $H$, we have
$h^1(H,\Ii _{B,H}(d)) =0$ and hence $h^0(H,\Ii _{B,H}(d)) =\binom{n+d-1}{n-1} -h^0(\Oo _B(d)) =\binom{n+d-1}{n-1}
- \binom{n-k+d}{n-k}$. Since $H$ is a general hyperplane containing $B$, we have $\langle A\rangle \nsubseteq H$ (even
when $B\subset \langle A\rangle$). Since $A$ is a smooth quadric hypersurface of $\langle A\rangle$, $A\cap H$ is an irreducible quadric hypersurface of $H\cap \langle A\rangle$. 

Assume $A\cap H\nsubseteq B$. Since $\Ii _{B,H}(d)$ is globally generated, $h^0(H,\Ii _{(A\cap H)\cup B,H}(d))<h^0(H,\Ii _{B,H}(d))$. By \eqref{eqa1}, the last inequality is
\[
h^0(H,\Ii _{(A\cap H)\cup B,H}(d)) = h^0(\Ii _{A\cup B}(d)) - h^0( \Ii _A(d-1)) < h^0(H,\Ii _{B,H}(d)). 
\]
Therefore 
\[
h^0(\Ii _{A\cup B}(d)) < h^0(H,\Ii _{B,H}(d)) + h^0( \Ii _A(d-1)).
\]
Using the previous formulas, one finds that the latter inequality is equivalent to 
\[
h^0(\Ii _{A\cup B}(d)) < h^0( \Ii _A(d)) - \binom{n-k-2+d}{n-k}.
\]
Since $\binom{n-k-2+d}{n-k} \ge k(n-k+1)$ by assumption, we derive $h^0(\Ii_{A\cup B}(d)) < h^0(\Ii _A(d)) -k(n-k+1)$ in this case.

In the case where $A\cap H\subseteq B$ for a general $H$, since both $A$ and $B$ are irreducible, one sees that this inclusion implies $A\subseteq B$. So we would have equality, which is a contradiction. 

Finally, assume $B\subseteq A\cap H$. Since $A$ is a smooth quadric hypersurface of $\langle A\rangle$, its rank (as quadratic form)
is $\dim\langle A\rangle +1 =n-k+2$. Thus $A\cap H$ is a quadric hypersurface of rank $\geq n-k+1$. Since $A\cap H$ contains the $(n-k)$-dimensional linear space $B$, the quadric $A$ is a reducible quadric hypersurface in $\langle A\rangle$. Hence $A$ has rank $\le 2$. Thus $n\leq k+2$, contradicting our assumption.
\end{proof}

Before we proceed, recall that, for any scheme $W\subset \PP^3$, the residual scheme $\mathrm{Res}_H(W)$ of $W$ with respect to $H$
is the closed subscheme of $\PP^3$ with $\Ii _W:\Ii _H$ as its ideal sheaf. For every $k$, we have a residual exact sequence of $H$ in $\PP^3$, 
which we will use repeatedly in the subsequent proofs:
\begin{equation}\label{eqres1}
0 \longrightarrow \Ii _{\mathrm{Res}_H(W)}(k-1)\longrightarrow \Ii _W(k)\longrightarrow \Ii _{W\cap H,H}(k)\longrightarrow 0.
\end{equation}

The next result shows that $(\overline{F(2,2;3)}\setminus F(2,2;3))\cap F(1,4;3) \ne \emptyset$. In particular, this says
that $F(i_1,\ldots, i_k; n)$ and $E(i_1,\ldots, i_k; n)$ might be not closed.

\begin{theorem}\label{cor1}
Assume $d\ge 6$. Then there exists a degree $d$ surface in 
\[
\left(\overline{F(2,2;3)}\setminus F(2,2;3)\right)\cap F(1,4;3).
\]
\end{theorem}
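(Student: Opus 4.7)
The plan is to construct an explicit one-parameter family $\{f_t\}_{t\in \AA^1}$ of degree-$d$ surfaces, with $f_t \in F(2,2;3)$ for generic $t\ne 0$, whose limit $f_0$ is a reducible surface $\ell\cdot g$ for a linear form $\ell$ and $g\in H^0(\Ii_L(d-1))$ (with $L\subset \PP^3$ a fixed line). The family witnesses $f_0\in \overline{F(2,2;3)}$; a direct algebraic identity gives $f_0\in F(1,4;3)$; the hard part is to show $f_0\notin F(2,2;3)$ by an incidence-variety dimension count in $|\Ii_L(d-1)|$.

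Choose generic linear forms $\ell, m_1, m_2 \in H^0(\Oo_{\PP^3}(1))$, generic quadrics $A_1, A_2 \in H^0(\Oo_{\PP^3}(2))$, and generic $h_1, h_2 \in H^0(\Oo_{\PP^3}(d-2))$. Set $q_i(t) := \ell m_i + tA_i$ and
\[
f_t := q_1(t)h_1 + q_2(t)h_2 = \ell\,(m_1 h_1 + m_2 h_2) + t\,(A_1 h_1 + A_2 h_2).
\]
For generic $t\ne 0$, the two quadrics $q_1(t), q_2(t)$ share no common linear factor, hence form a regular sequence, so $f_t \in F(2,2;3)$. Letting $t\to 0$ gives $f_0 = \ell\cdot g$ with $g := m_1 h_1 + m_2 h_2$ and $L := V(m_1,m_2)$, so $f_0\in \overline{F(2,2;3)}$. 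To place $f_0$ in $F(1,4;3)$, pick any quartic $Q$ with $\ell\nmid Q$ and any nonzero $H'\in H^0(\Oo_{\PP^3}(d-5))$ (which exists since $d\ge 6$): the identity
\[
f_0 \;=\; \ell\,(g-QH') \;+\; Q\,(\ell H')
\]
displays $f_0$ in the form $\ell G + Q H$ with $\{\ell,Q\}$ a regular sequence, $\deg G = d-1$ and $\deg H = d-4$.

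The main obstacle is showing $f_0\notin F(2,2;3)$. Suppose otherwise, so $f_0 = q_1 h_1' + q_2 h_2'$ for some regular sequence $\{q_1,q_2\}$ of quadrics. Then $Z := V(q_1,q_2)$ is Cohen--Macaulay, pure of dimension one, with $\deg Z = 4$ and $p_a(Z) = 1$, and $Z\subset V(\ell)\cup V(g)$. A pure one-dimensional plane curve of degree $4$ has arithmetic genus $\binom{3}{2}=3$, so $Z$ cannot lie in $V(\ell)$, and the part of $Z$ outside $V(\ell)$ is contained in $V(g)$. Consider the incidence
\[
\Gamma := \bigl\{(Z, V(g))\in \mathcal{Z}_{2,2}\times |\Ii_L(d-1)| \,:\, Z\subset V(g)\bigr\},
\]
where $\mathcal{Z}_{2,2}$ is the $16$-dimensional irreducible variety from Remark \ref{ooo1}. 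The Koszul resolution $0\to \Oo_{\PP^3}(d-3)\to \Oo_{\PP^3}(d-2)^{\oplus 2}\to \Ii_L(d-1)\to 0$ gives $h^0(\Ii_L(d-1))=2\binom{d+1}{3}-\binom{d}{3}$, and for generic disjoint $(L,Z)$ the Mayer--Vietoris sequence yields $h^0(\Ii_{L\cup Z}(d-1))=\binom{d+2}{3}-5d+4$; so the dominant stratum of $\Gamma$ has dimension at most $16 + \binom{d+2}{3} - 5d + 3$, and similar estimates on the proper closed strata (where $L\cap Z\ne \emptyset$, or $L\subset Z$, or $Z$ is reducible with some components in $V(\ell)$, each handled by replacing $Z$ by the union of components of $Z$ not in $V(\ell)$) give matching or weaker but still nontrivial codimension bounds. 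In all cases, the image $\Sigma$ of $\Gamma$ in $|\Ii_L(d-1)|$ has codimension at least $4d-20\ge 4$ when $d\ge 6$. Since the multiplication map $(h_1,h_2)\mapsto m_1 h_1 + m_2 h_2$ is surjective onto $H^0(\Ii_L(d-1))$ (by the same Koszul sequence), generic $(h_1,h_2)$ produce $V(g)\notin \Sigma$, contradicting the assumed decomposition. Hence $f_0\notin F(2,2;3)$, completing the proof.
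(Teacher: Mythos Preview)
Your construction is genuinely different from the paper's. The paper produces its surface as a \emph{general} element of $|\Ii_Z(d)|$, where $Z$ is a carefully built non-reduced flat degeneration of smooth elliptic quartics (a nodal plane quartic with two embedded points); it then proves $h^0(\Ii_Z(d))=h^0(\Ii_{Q(t)}(d))$ so that every such surface lies in $\overline{F(2,2;3)}$, and membership in $F(1,4;3)$ comes from the reduced plane quartic $T\subset Z$. You instead take the explicitly \emph{reducible} surface $V(\ell)\cup V(g)$, obtained by letting the two quadrics acquire the common linear factor $\ell$. Your verifications of $f_0\in\overline{F(2,2;3)}$ and $f_0\in F(1,4;3)$ are shorter and correct.

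The gap is in the proof that $f_0\notin F(2,2;3)$. The condition you must rule out is $\ell g\in(q_1,q_2)$, not $g\in(q_1,q_2)$. These agree exactly when $\ell$ is a nonzerodivisor modulo $(q_1,q_2)$, i.e.\ when no component of $Z=V(q_1,q_2)$ lies in $V(\ell)$; only then does your incidence $\Gamma=\{(Z,V(g)):Z\subset V(g)\}$ capture the obstruction, and your codimension $4d-20$ count is valid. But the complementary case, where $Z$ has a degree-$1$ or degree-$2$ subcurve inside the plane $V(\ell)$, is not handled: you assert that ``similar estimates'' apply after replacing $Z$ by its components outside $V(\ell)$, yet you neither parametrize those strata nor bound the relevant $h^0$. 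This is precisely where the paper invests its effort---cases (b), (c), (d) of its proof are a careful residual-sequence analysis, with different parameter counts (the strata of such $Z$ have dimension $\le 15$, $\le 12$, etc.) and different codimension bounds (on the order of $3d-14$, $2d-9$, and so on, not $4d-20$). For $d=6$ some of these margins are thin, and if a component of $Z$ in $V(\ell)$ is non-reduced, the condition $\ell g\in I(Z)$ is not even equivalent to ``$g$ vanishes on the residual part of $Z$''. So the heart of the argument is missing; to complete your route you would have to carry out a stratified dimension count of the same sort the paper performs, adapted to the geometry $V(\ell)\cup V(g)$.
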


\begin{proof}
We first show two claims. \\

\noindent {\it Claim 1}: Let $H\subset \PP^3$ be a plane. There is a flat family of smooth degree $4$ linearly normal elliptic
curves whose flat limit is a curve $Z\subset \PP^3$ with $Z_{\red} =T\subset H$ being an integral curve of degree $\deg (T)=4$
and two ordinary nodes. Moreover, the nilradical of the sheaf $\Oo _Z$ defines a zero-dimensional scheme of degree $2$ supported at the two
singular points of $T$. \\

\noindent{\it Proof of Claim 1}: This proof is an adaptation of \cite[Example III.9.8.4]{h}, which gives a flat limit $Z'$ of a family of rational normal curves in $\PP^3$, where $Z'_{\red}$ is a nodal plane cubic, with the nilradical of $\Oo _{Z'}$ supported at the singular point of $Z'_{\red}$. 

We regard the $\PP^3$ we are working with as a hyperplane $P$ in a $\PP^4$; we fix another hyperplane $P'$ of $\PP^4$ such that $P\cap P' =H$. Fix a smooth linearly normal elliptic curve $X\subset P$ of degree $4$. For any point $o\in \PP^4\setminus P'$, let $\pi _o: \PP^4\setminus \{o\} \to P'$ denote the linear projection from $o$, i.e. for each $q\in \PP^4\setminus \{o\}$ we define $\pi _o(q) = \langle \{o,q\}\rangle \cap P'$ (note that the line $\langle \{o,q\}\rangle$ contains a unique point of $P'$, because $P'$ is a hyperplane in $\PP^4$ and $o\notin P'$, so the morphism is well-defined). 

For any $o\in \PP^4\setminus P'$, let $\pi^P_o$ denote the projection $\pi_o$ restricted to $P$, i.e. $\pi^P_o: P\setminus \{o\} \to P'$. Note that if $o\notin (P\cup P')$, then $\pi^P_o(X)$ is a smooth linearly normal elliptic curve in $P'$ and there is a linear projective isomorphism of $P$ onto $P'$ sending $X$ onto $\pi _o(X)$. 

Fix a general line $L$ such that $\lbrace a\rbrace = L\cap P$ and $\lbrace a'\rbrace = L'\cap P'$. Each of the points $q\in L\setminus \lbrace a,a' \rbrace$ gives rise to the space curve $\pi^P_q(X)$ as above; since they are all isomorphic, this family parameterized by $\AA^1\setminus\lbrace 0\rbrace$ is flat. Thus it admits a flat limit
\cite[Proposition III.9.8]{h} at $a\in L\cap P$. The projection $\pi^P_a$ is onto the plane $H$ and the image $\pi^P_a(X)\subset H$ is set-theoretically a singular plane curve of degree $4$ with two singular points $p_1,p_2$, because $\pi^P_a$ is birational on $X$ and $X$ has arithmetic genus $1$. Scheme-theoretically, this has two embedded components at the singular points $p_1$ and $p_2$, similarly to \cite[Example III.9.8.4]{h}. \\

\noindent {\it Claim 2}: Let $Q(t) = \lbrace q_{1,t} = q_{2,t} = 0\rbrace$ be a family of smooth degree $4$ linearly normal elliptic
curves, with flat limit $\lim_{t\rightarrow 0} Q(t) = Z$. Then 
\[
\lim_{t\rightarrow 0} H^0(\Ii_{Q(t)}(d)) = H^0(\Ii _Z(d)), \mbox{ for all } d\geq 1.
\]
In other words, the flat limit described above scheme-theoretically is a flat limit of ideals (and therefore the homogeneous ideals of $Q(t)$ ($t\neq 0$) and $Z$ share the same Hilbert function). \\

\noindent{{\it Proof of Claim 2}}: 
By definition of limit, one has the inclusion 
\[
\lim_{t\rightarrow 0} H^0(\Ii_{Q(t)}(d))\subseteq  H^0(\Ii _Z(d)). 
\]
We calculate $h^0(\Ii _Z(d))$. Let $p_1,p_2$ be the two singular points of $T$ and one has $\mathrm{Res}_H(Z)=\{p_1,p_2\}$. Using the exact sequence
\[
0\longrightarrow \Ii _{p_1,p_2}(d-1)\longrightarrow \Oo_{\PP^3}(d-1)\longrightarrow \Oo_{p_1,p_2}(d-1)\longrightarrow 0,
\]
we find $H^0( \Ii _{p_1,p_2}(d-1)) = \binom{d+2}{3}-2$. Now, from the cohomology exact sequence of \eqref{eqres1}, we find:
\[
h^0(\Ii _Z(d)) = h^0( \Ii _{p_1,p_2}(d-1)) + h^0(\Oo_{H}(d-4)) = \binom{d+2}{3}-2+\binom{d-2}{2} = \binom{d+3}{3} - 4d. 
\]
This number coincides with $h^0(\Ii_{Q(t)}(d))$ for any $t\neq 0$ and the equality is proven. \\

By {\it Claim 2}, any section $u\in H^0(\Ii _Z(d))$ is a limit of a sequence of sections in $H^0(\Ii _{Q(t)}(d))$, where each $Q(t)$ ($t\neq 0$) is a complete intersection
of two quadrics. So we have shown that $\lbrace u = 0\rbrace \in \overline{F(2,2;3)}$. Since $T\subset Z$, we have $\Ii _Z\subset \Ii _T$. Hence $\lbrace u= 0\rbrace\in F(1,4;3)$, because $T$ is a complete intersection between a plane and a quartic surface in $\PP^3$. 

We show next that a general $u\in H^0(\Ii _Z(d))$ is such that $\lbrace u = 0\rbrace \notin F(2,2;3)$, i.e. there are no homogeneous polynomials $u_1,u_2, v_1,v_2$ with $\deg (u_1)=\deg (u_2) = 2$, $\deg (v_1)=\deg(v_2) =d-2$, and where $\lbrace u_1,u_2\rbrace$ is a regular sequence (the scheme $B = \{u_1=u_2=0\}$ is a complete intersection curve in $\PP^3$ of degree $4$ and arithmetic genus $1$).

The set $\Bb$ of all such $B$ has dimension $16$, because the Hilbert scheme of linearly normal elliptic curves in $\PP^3$ has dimension $16$ \cite[Corollary 1.45]{hm}. To conclude, it would be sufficient to prove that 
\begin{equation}\label{countparameters}
h^0(\Ii _{Z\cup B}(d)) < h^0(\Ii _Z(d))-16,
\end{equation} 
for all $B\in \Bb$. As we saw, $h^0(\Ii _Z(d)) =\binom{d+3}{3} -4d$. 

Fix $B\in \Bb$ and consider the residual sequence of $H$:
 \begin{equation}\label{eqec}
0 \to \Ii _{\Res_H(Z\cup B)}(d-1) \to \Ii _{Z\cup B}(d)\to \Ii _{(B\cup Z)\cap H,H}(d)\to 0. 
\end{equation}
Recall that $Z\cap H = Z_{\red} = T$. \\

\quad (a) Assume that no irreducible component of $B$ is contained in $H$. Since $B$ is a complete intersection, it has no embedded components. Thus $\Res _H(B) =B$. So $h^0(\Ii _{\Res _H(Z\cup B)}(d-1)) \le h^0(\Ii _B(d-1)) =
\binom{d+2}{3} -4(d-1)$; the last equality holds because  $h^1(\Ii _B(d)) =0$. Since $(B\cup T)\cap H\supseteq T$, we have $h^0(H,\Ii _{(B\cup Z)\cap H,H}(d)) \le h^0(H,\Ii _{T,H}(d)) =h^0(\Oo _H(d-4)) =\binom{d-2}{2}$. Thus $h^0(\Ii _{Z\cup B}(d)) \le \binom{d+2}{3} -4(d-1) +\binom{d-2}{2}$. Inequality \eqref{countparameters} is satisfied 
if $\binom{d+2}{3} -4(d-1) +\binom{d-2}{2}< \binom{d+3}{3} -4d - 16$, which holds if and only if 
\[
4d - 22 >0. 
\]

\quad (b) Assume that $B$ has a degree $1$ component contained in $H$. Thus $\Res _H(B)$ is a degree
$3$ curve with arithmetic genus $\le 1$. For space cubics $Y$ (even reducible and non-connected ones) a case-by-case check shows that for all $k\geq 3$
one has $h^1(\Ii _{Y}(k)) = 0$. Hence $h^1(\Ii _{\Res_H(B)}(d-1)) = 0$ and so $h^0(\Ii _{\Res_H(B)}(d-1)) \leq \binom{d+2}{3} -3(d-1)$ (whether equality holds depends on the arithmetic genus of $\Res _H(B)$). Since $(Z\cup B)\cap H$ contains a plane curve of degree $5$, we have $h^0(H,\Ii _{(Z\cup B)\cap H,H}(d)) \le \binom{d-3}{2}$. Thus \eqref{eqec} gives $h^0(\Ii _{Z\cup B}(d)) \le \binom{d+2}{3} -3(d-1) + \binom{d-3}{2}$. Note that this case depends in fact on strictly less than $16$ parameters, so at most $15$. 
So we may modify inequality \eqref{countparameters} to: 
\[
h^0(\Ii _{Z\cup B}(d)) < h^0(\Ii _Z(d))-15.
\]
This is satisfied if  $\binom{d+2}{3} -3(d-1) + \binom{d-3}{2} < \binom{d+3}{3} -4d - 15$, which holds if and only if 
\[
4d - 23 >0.
\]

\quad ({c}) Assume $B$ has a degree $2$ component contained in $H$. Thus $\deg (\Res _H(B)) =2$ and we have
$h^0(\Ii _{\Res_H(B)}(d-1))= \binom{d+2}{3}-2d+1$. One does a similar calculation as above.  \\

\quad ({d}) Since $B$ is a complete intersection of two quadrics, $H$ cannot contain a subcurve of $B$ of degree $3$ by B\'{e}zout. We have analyzed all possible cases concluding the proof. 
\end{proof} 

\begin{theorem}\label{cor2}
Assume $d\ge 6$. Then there exists a degree $d$ surface in 
\[
(\overline{F(2,2;3)}\setminus F(2,2;3))\cap F(1,3;3) \cap F(1,1;3).
\] 
\end{theorem}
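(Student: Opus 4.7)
The plan is to adapt the construction of Theorem \ref{cor1} by adding to the flat limit $Z$ of elliptic quartics a reducible plane cubic which will simultaneously place the resulting surface in $F(1,3;3)$ and $F(1,1;3)$. Let $Z\subset\PP^3$ be as in Theorem \ref{cor1}, with $Z_{\red}=T\subset H$ the integral nodal plane quartic. Choose a second plane $H'\neq H$ and, inside $H'$, a line $\ell_0$ and a smooth conic $C''$ meeting $\ell_0$ transversely; set $C':=\ell_0\cup C''\subset H'$, a reducible plane cubic of arithmetic genus $1$ and degree $3$. Since $Z\cap H'$ consists of at most four points on the line $H\cap H'$, a general choice of $C'$ has $C'\cap Z=\emptyset$. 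Define $W:=Z\cup C'$. A disjoint-union computation combined with standard Castelnuovo--Mumford vanishing gives $h^0(\Ii_W(d))=\binom{d+3}{3}-7d>0$ for $d\geq 6$.

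For any $u\in H^0(\Ii_W(d))$, the surface $X=V(u)$ satisfies the three required membership conditions. Since $u\in H^0(\Ii_Z(d))$, Claim~2 in the proof of Theorem \ref{cor1} provides $u(t)\in H^0(\Ii_{Q(t)}(d))$ with $u(t)\to u$ for a family $Q(t)$ of smooth complete intersections of two quadrics, whence $X=\lim V(u(t))\in\overline{F(2,2;3)}$. Since $C'=V(h',f_3)$ is the complete intersection of the plane $H'=V(h')$ with a cubic polynomial $f_3$, the membership $u\in (h',f_3)$ yields $u=h'g+f_3 g'$, so $X\in F(1,3;3)$. Writing $\ell_0=V(\ell_1,\ell_2)$ and using $\ell_0\subset C'\subset V(u)$, we obtain $u=\ell_1 g_1+\ell_2 g_2$, so $X\in F(1,1;3)$.

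Finally, one must show that a general $u\in H^0(\Ii_W(d))$ has $V(u)\notin F(2,2;3)$. The bad locus is $\bigcup_{B\in\Bb}H^0(\Ii_{W\cup B}(d))$, where $\Bb$ is the $16$-dimensional Hilbert scheme of smooth linearly normal quartic elliptic curves, and the required estimate is $h^0(\Ii_{W\cup B}(d))<h^0(\Ii_W(d))-16$ for every $B\in\Bb$. For $B$ generic (disjoint from $W$), the restriction $H^0(\Ii_W(d))\to H^0(\Oo_B(d))$ is surjective, giving $h^0(\Ii_W(d))-h^0(\Ii_{W\cup B}(d))=4d\geq 24>16$. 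The main obstacle is handling degenerate $B$'s through the residual-sequence case analysis (a)--(d) of Theorem \ref{cor1}, supplemented by symmetric cases for $B$'s having components in the new plane $H'$. Because $C'$ is chosen general with respect to both $H$ and $B$, vanishing on $C'$ imposes $3d$ independent conditions on both sides of the inequality established in Theorem \ref{cor1}, preserving the strict margin and completing the proof.
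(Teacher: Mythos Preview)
Your construction is quite different from the paper's, and the final step has a real gap.

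\textbf{The paper's approach.} Rather than augmenting the non-reduced limit $Z$ from Theorem~\ref{cor1} with extra components, the paper produces a \emph{different} flat limit of elliptic quartics: it takes $Z=C\cup L$, where $C\subset H$ is a smooth plane cubic and $L$ is a line with $L\cap H=\{p\}\subset C$. This connected nodal curve has degree $4$ and $p_a=1$, and is shown to be smoothable inside $\PP^3$ via the Hartshorne--Hirschowitz criterion (\cite{hh2}). Because $Z$ itself already contains the plane cubic $C$ and the line $L$, any $u\in H^0(\Ii_Z(d))$ automatically lies in $F(1,3;3)\cap F(1,1;3)$, with no auxiliary plane $H'$ or curve $C'$ needed. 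The exclusion from $F(2,2;3)$ is then a six-case residual analysis (with respect to $H$) directly on $Z\cup B$.

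\textbf{The gap in your argument.} Your last paragraph asserts that adding $C'=\ell_0\cup C''$ ``imposes $3d$ independent conditions on both sides'' of the inequality from Theorem~\ref{cor1}, so that $h^0(\Ii_{Z\cup B}(d))<h^0(\Ii_Z(d))-16$ transfers to $h^0(\Ii_{W\cup B}(d))<h^0(\Ii_W(d))-16$. This fails precisely when $B$ shares a component with $C'$. For instance, if $B\supset\ell_0$ (a $10$-dimensional subfamily of $\Bb$, since $|\Ii_{\ell_0}(2)|\cong\PP^6$), then $W\cup B=Z\cup C''\cup B$ and the extra conditions beyond $Z\cup B$ come only from the conic $C''$, at most $2d+1<3d$ of them; similarly if $B\supset C''$. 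Your phrase ``$C'$ is chosen general with respect to \dots\ $B$'' cannot resolve this: $C'$ is fixed once and for all, while $B$ ranges over the entire family $\Bb$, and the line $\ell_0\subset C'$ will be contained in some members of $\Bb$ regardless of how general $C'$ is. So the reduction to Theorem~\ref{cor1}'s estimate is invalid, and you would need a genuinely new case analysis covering the interactions of $B$ with $H'$, $\ell_0$, and $C''$---which you acknowledge but do not carry out. The paper's choice of limit avoids this extra layer entirely.
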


\begin{proof}
We first show two claims. \\

\noindent {\it Claim 1}: Let $H\subset \PP^3$ be a plane. Let $C\subset H$ be a smooth degree $3$ plane curve. Fix $p\in C$ and let
$L\subset \PP^3$ be a line such that $L\cap H =\{p\}$. Let $Z = C\cup L$. Then $Z$ is a flat limit of a family of smooth space
curves with degree $4$ and arithmetic genus $1$, which are complete intersections of two quadric surfaces.\\

\noindent {\it Proof of Claim 1}: The scheme $Z$ is a connected and nodal space curve of degree $4$ and arithmetic genus $1$. Every connected
smooth curve $X\subset\PP^3$ with degree $4$ and genus $1$ is the complete intersection of two quadric surfaces. 
The curve $Z$ is a nodal curve with two smooth irreducible components, $C$ and $L$. The smoothability of such curves inside $\PP^3$ is the content of \cite{hh2}, which we briefly describe. 

Since $Z$ is connected, $\deg (Z)=4$ and $p_a(Z)=1$, $Z$ is smoothable inside $\PP^3$ if and only if it is a flat limit of linearly normal elliptic curves, i.e. of smooth complete intersections of two quadrics. We use that $\{p\} =C\cap L$ is a single point.

Call $N_Z$, $N_C$ and $N_L$ the normal bundles of $Z$, $C$ and $L$, as subcurves of $\PP^3$. We claim that $h^1(N_Z)=0$ and $Z$ is smoothable inside $\PP^3$. 

To see this, we may apply \cite[Theorem 4.1]{hh2} to $Z =C\cup L$ using $C$ (resp. $L$) as the curve $C$ (resp. $D$) in the statement of \cite[Theorem 4.1]{hh2}.  The set $\Delta = C\cap L = \lbrace p\rbrace$. Since $Z$ is a locally complete intersection, $N_Z$ is a rank $2$ vector bundle on $Z$.  Since $C$ (resp. $L$) is a complete intersection of a cubic and a plane (resp. two planes) we have $N_C\cong \Oo _C(3)\oplus \Oo _C(1)$ (resp. $N_L\cong \Oo _L(1)\oplus \Oo _L(1))$. Note that $C$ is an elliptic curve, so $h^1(N_C)=0$. Since $N_L$ is a direct sum of two line bundles of degree one on $L\cong \PP^1$, each vector bundle $E$ on $L$ obtained from $N_L$ by a negative elementary transformation at $p$ is isomorphic to $\Oo _L(1)\oplus \Oo _L$. Thus $h^1(E)=0$. This is the second (and last) condition to be satisfied in the hyphoteses of  \cite[Theorem 4.1]{hh2}. (Let $E$ be a rank $r$ vector bundle of degree $d$ on a curve $X$ and fix a smooth point $p\in X$. A {\it negative elementary transformation} of $E$ at $p$ is a rank $r$ vector bundle $E'$ of degree $d-1$ on $X$, equipped with an inclusion of sheaves $j: E'\hookrightarrow E$ such that $E/j(E')$ is the skyscraper sheaf $\CC_p$ at $p$.) \\

\noindent {\it Claim 2}: Let $Q(t) = \lbrace q_{1,t} = q_{2,t} = 0\rbrace$ be a family of smooth degree $4$ linearly normal elliptic
curves, with flat limit $\lim_{t\rightarrow 0} Q(t) = Z$. Then 
\[
\lim_{t\rightarrow 0} H^0(\Ii_{Q(t)}(d)) = H^0(\Ii _Z(d)), \mbox{ for all } d\geq 1.
\]
In other words, the flat limit described above is a flat limit of ideals (and therefore the homogeneous ideals of $Q(t)$ ($t\neq 0$) and $Z$ share the same Hilbert function). \\

The proof of {\it Claim 2} is very similar to the proof of {\it Claim 2} in Theorem \ref{cor1}, and we omit it. 
Thus we obtain an element $u\in \overline{F(2,2;3)}$. Moreover, since $\Ii _L, \Ii _C\supset \Ii _Z$, then $u\in F(1,1;3)\cap F(1,3;3)$,
as $L$ and $C$ are both complete intersections. 

We now prove that a general $u\in H^0(\Ii _Z(d))$ does not belong to $F(2,2;3)$. Let $\Bb$ be the set of all complete intersection curves $B\subset \PP^3$ of two quadric surfaces. We have $\dim \Bb =16$, as in the proof of Theorem \ref{cor1}. Fix $B\in \Bb$. We use the residual exact sequence \eqref{eqec}. Recall $Z\cap H =C$ is a cubic curve and $\mathrm{Res}_H(Z)=L$. \\

\quad (a) Assume that no irreducible component of $B$ is contained in $H$ and that $L\nsubseteq B$. In this case, we have $\mathrm{Res}_H(Z\cup B) =L\cup B$, with $L\nsubseteq B$. Since $B$ is the complete intersection of two quadrics and $L\nsubseteq B$, one has $\deg (L\cap B)\le 2$. Using the residual exact sequence of a general plane containing $L$, we derive $h^1(\Ii _{B\cup L}(k)) =0$ for all $k\ge 3$. 

Indeed, let $P\subset \PP^3$ be a general plane containing $L$. Since $L\nsubseteq B$ and $P$ is general, $B\cap P$ is zero-dimensional. Since $B$ has no embedded components, we have $\mathrm{Res}_P(B)=B$. Thus the residual exact sequence with respect to the plane $P$ is:  
\begin{equation}\label{eqkk1}
0 \longrightarrow \Ii _B(k-1)\longrightarrow \Ii _{B\cup L}(k)\longrightarrow \Ii _{(P\cap B)\cup L,P}(k) \longrightarrow 0. 
\end{equation}
Since $B$ is a complete intersection, we have $h^1(\Ii _B(k-1)) =0$ for all $k\in \ZZ$. On $P$, consider the residual exact sequence with 
respect to $L$ of $(P\cap B)\cup L$: 
\[
0 \longrightarrow \Ii_{\mathrm{Res}_L((P\cap B)\cup L)}(k-1) \longrightarrow \Ii_{(P\cap B)\cup L}(k) \longrightarrow \Ii _{((P\cap B)\cap L)\cup L, L}(k)\longrightarrow 0.
\]
Note that, since $((P\cap B)\cap L)\cup L = L$, the right-most ideal sheaf is zero. Therefore: 
\[
h^i(P, \Ii_{\mathrm{Res}_L((P\cap B)\cup L)}(k-1)) = h^i(P,  \Ii_{(P\cap B)\cup L}(k)) = h^i(\Ii _{(P\cap B)\cup L,P}(k)),
\]
where the last equality is by definition. Recall $\lbrace p\rbrace = C\cap L$. Since $\deg (B\cap P) = 4$ and $p\in (B\cap P)\cap L$, we have $\deg (\mathrm{Res}_L((P\cap B)\cup L))\le 3$. Thus $h^1(P,\Ii _{\mathrm{Res}_L((P\cap B)\cup L)}(k-1)) =0$ for all $k\ge 3$. So $h^1(\Ii _{B\cup L}(k)) = h^1(\Ii _{(P\cap B)\cup L,P}(k)) = 0$, for $k\geq 3$. 

Since $d\ge 6$ and $h^1(\Ii _{B\cup L}(d-1)) = 0$, from the ideal sheaf exact sequence, we have 
$h^0(\Ii _{B\cup L}(d-1)) =\binom{d+2}{3} -4(d-1) -d +\deg (L\cap B) \le \binom{d+2}{3}-5d+6$. Since $(Z\cup B)\cap H\supseteq C$, we have $h^0(H,\Ii _{(Z\cup B)\cap H,H}(d)) \le \binom{d-1}{2}$. Thus the residual exact sequence of $Z\cup B$ with respect to $H$ 
gives $h^0(\Ii _{Z\cup B}(d))\le \binom{d-1}{2} + \binom{d+2}{3} -5d+6$. We have $\binom{d-1}{2} + \binom{d+2}{3} -5d+6 < \binom{d+3}{3} -4d-16$. 
The last inequality is satisfied if and only if $4d-22>0$. \\

\quad (b) Assume no irreducible component of $B$ is contained in $H$ and that $L\subset B$. The set of all such $B\in \mathcal B$ depends on $10$ parameters, because the line $L$ is fixed and so we are counting the dimension of the Grassmannian of lines of the $6$-dimensional projective space $|\Ii _L(2)|$.  Since $(Z\cup B)\cap H\supseteq C$, we have $h^0(H,\Ii _{(Z\cup B)\cap H,H}(d)) \le \binom{d-1}{2}$. Since $\mathrm{Res}_H(Z\cup B)\supseteq B$, we have $h^0(\Ii _{\mathrm{Res}_H(Z\cup B)}(d-1)) \le \binom{d+2}{3} -4(d-1)$. Thus the residual exact sequence of $Z\cup B$ with respect to $H$ gives $h^0(\Ii _{Z\cup B}(d))
\le \binom{d-1}{2} +\binom{d+2}{3} -4(d-1)$. We have $h^0(\Ii _{Z\cup B}(d)) < \binom{d+3}{3} -4d-10$. Indeed, the last inequality is satisfied if and only if $3d-14>0$. \\

\quad ({c}) Assume $B$ has a degree $1$ component contained in $H$ and that $L\not\subset B$. Thus $(Z\cup B)\cap H$ contains a degree $4$ curve
and hence $h^0(H,\Ii _{(Z\cup B)\cap H,H}(d)) \le \binom{d-2}{2}$. The residual scheme $\mathrm{Res}_H(Z\cup B)$ contains the degree $4$ curve $T = \mathrm{Res}_H(B)\cup L$, where the degree $3$ curve $Y = \mathrm{Res}_H(B)$ is not contained in a plane, because the complete intersection curve $B$ cannot contain any plane cubic by B\'{e}zout. For space cubics $Y$ (even reducible and non-connected ones) a case-by-case check shows that for all $k\geq 3$ one has $h^1(\Ii _{Y}(k)) =0$. Using the residual exact sequence of a general hyperplane containing $L$, we see $h^1(\Ii _T(k)) =0$ for $k\ge 4$. Since $\deg (L\cap Y)\le \deg (L\cap B)\le 2$ and $d\geq 6$, we have $h^0(\Ii _{\mathrm{Res}_H(Z\cup B)}(d-1))\leq h^0(\Ii _T(d-1)) = h^0(\Ii _{Y}(d-1)) - h^0(\Oo _L(d-1)) +\deg (L\cap Y)$ and so $h^0(\mathrm{Res}_H(Z\cup B)(d-1)) \le \binom{d+2}{3} -3(d-1)-d+2$. 

Thus the residual exact sequence for  $Z\cup B$ with respect to $H$ gives $h^0(\Ii _{Z\cup B}(d)) \le \binom{d-2}{2} + \binom{d+2}{3} -3(d-1)-d+2$. 
This inequality is satisfied if and only if $4d-23>0$. \\

\quad (d) Assume $B$ has a degree $1$ component contained in $H$ and that $L\subset B$. As in step ({c}), we obtain $h^0(H,\Ii _{Z\cap H,H}(d)) \le \binom{d-2}{2}$
and $h^0(\Ii _{\mathrm{Res}_H(Z\cup B)}(d-1))\le \binom{d+2}{3} -2d +1 -d +2$. Thus $h^0(\Ii _{Z\cup B}(d)) \le \binom{d-2}{2}
+\binom{d+2}{3}-3d$. As in step (b), here all $B\in \mathcal B$ we are considering depend on $10$ parameters. The inequality $\binom{d-2}{2} +\binom{d+2}{3}-3d +3 < \binom{d+3}{3} -4d-10$ is satisfied if and only if $3d-15>0$. \\

\quad (e) Assume $B$ has a degree $2$ component contained in $H$ and that $L\not\subset B$. So $B\cap H$ contains a degree $2$ curve $E$. Fix a general $o\in H$. Since $h^0(\Ii _B(2)) =2$, there is $Q\in |\Ii _{o\cup B}(2)|$. Since $Q\cap H\supsetneq E$ and $\deg (Q)=2$, B\'{e}zout theorem implies $H\subset Q$. Thus $Q =H\cup H'$ for some plane $H'$. The set of all planes $H'\subset \PP^3$ has dimension $3$ and $H$ is fixed. Since $\dim |\Oo _{\PP^3}(2)| =9$, the set of all $B\in \Bb$ with $B\cap H$ containing a degree $2$ curve has dimension $\le 12$.

Since $(Z\cup B)\cap H$ contains a degree $5$ plane curve, we have $h^0(H,\Ii _{(Z\cup B)\cap H,H}(d)) \le \binom{d-3}{2}$. Since $Y=\mathrm{Res}_H(B)$ has degree $2$ and $\mathrm{Res}_H(Z\cup B)$ is $Y\cup L$, as before, we obtain $h^0(\Ii _{\mathrm{Res}_H(Z\cup B)}(d-1)) \le \binom{d+2}{3}-3d+3$.
Thus $h^0(\Ii _{Z\cup B}(d)) \le \binom{d-3}{2} +\binom{d+2}{3}-3d+3$.  The inequality $\binom{d-3}{2} +\binom{d+2}{3}-3d+3< \binom{d+3}{3} -4d-12$ is satisfied if and only if $4d-20>0$.\\

\quad (f) Assume $B$ has a degree $2$ component contained in $H$ and that $L\subset B$. Thus $\mathrm{Res}_H(Z\cup B) =\mathrm{Res}_H(B)$ is a degree $2$
subcurve of $B$. Therefore $h^0(\Ii _{\mathrm{Res}_H(Z\cup B)}(d-1)) = \binom{d+2}{3} -2d+1$. Since $(Z\cup B)\cap H$ contains a degree $5$ plane curve, we get $h^0(\Ii _{Z\cup B}(d)) \le \binom{d-3}{2}+\binom{d+2}{3} -2d+1$. As in step (b), here all $B\in \mathcal B$ we are considering depend on $10$ parameters. The inequality $\binom{d-3}{2}+\binom{d+2}{3} -2d+1< \binom{d+3}{3} -4d-10$ is satisfied if and only if $3d-16>0$. 
\end{proof}

\begin{proposition}\label{u100}
Let $S$ be a smooth and connected projective surface such that its Picard group satisfies $\mathrm{Pic}(S)\cong \ZZ^2$. Assume that $\mathrm{Pic}(S)$ is freely generated by $[A]$ and $[B]$  with $[A]$ and $[B]$ effective, $B$ ample and $A$ an integral curve with $A^2<0$. Assume the existence of an integral curve $E\subset S$ such that $E^2<0$ and $E\ne A$. Then $[E]$ and $[A]$ form a basis of $\mathrm{Pic}(S)\otimes \QQ$ and the effective cone of $\mathrm{Pic}(S)\otimes \QQ$ is given by the set $h[A]+\ell[E]$ with $h, \ell\in \QQ_{\geq 0}$. Let $[E] =w[A]+z[B]$ with $w, z\in \QQ$. Then the effective cone in $\mathrm{Pic}(S)\otimes \QQ$  is the set of all $a[A]+b[B]$ with
$b\ge 0$ and $az-bw\ge 0$, $a, b\in \QQ$. \end{proposition}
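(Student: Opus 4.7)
The plan is to first establish that $[A]$ and $[E]$ are linearly independent in $\mathrm{Pic}(S)\otimes\QQ$, then use Hodge index to derive $A\cdot E>0$ together with $z>0$, and finally analyze an arbitrary effective divisor by peeling off its $A$- and $E$-components and intersecting the residual against $A$, $E$, and the ample class $B$. For independence, I would suppose $[E]=\lambda[A]$ for some $\lambda\in\QQ$. Ampleness of $B$ gives $A\cdot B>0$ and $E\cdot B>0$, so $\lambda=(E\cdot B)/(A\cdot B)>0$; but $E\ne A$ are distinct integral curves, so $A\cdot E\ge 0$, which contradicts $A\cdot E=\lambda A^2<0$. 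Hence $\{[A],[E]\}$ is a $\QQ$-basis.

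Since this basis sits inside a rank-two lattice containing the ample class $B$ with $B^2>0$, Hodge index tells me the intersection form has signature $(1,1)$, so the Gram determinant $A^2E^2-(A\cdot E)^2$ is negative. Because $A^2,E^2<0$, this gives $(A\cdot E)^2>A^2E^2>0$, and combined with $A\cdot E\ge 0$ yields $A\cdot E>0$. Writing $[E]=w[A]+z[B]$, the inequalities $E\cdot B>0$ and $E\cdot A\ge 0$ exclude $z\le 0$: $z=0$ contradicts independence, and if $z<0$ then $E\cdot B>0$ together with $B^2>0$ forces $w>0$, but then both $wA^2$ (since $A^2<0$) and $z(A\cdot B)$ are negative, violating $E\cdot A\ge 0$. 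Hence $z>0$.

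For the effective cone, I would take an effective divisor $D$ on $S$ and decompose $D=nA+mE+D''$, where $n,m\in\ZZ_{\ge 0}$ and $D''$ is effective with neither $A$ nor $E$ among its irreducible components (and possibly $D''=0$). When $D''\ne 0$ one has $D''\cdot A\ge 0$, $D''\cdot E\ge 0$, and $D''\cdot B>0$; writing $[D'']=\alpha[A]+\beta[E]$ with $\alpha,\beta\in\QQ$, a short sign analysis using $A^2,E^2<0$ and $A\cdot E>0$ forces $\alpha,\beta\ge 0$: if $\beta<0$ then $D''\cdot A\ge 0$ yields $\alpha A^2\ge -\beta(A\cdot E)>0$, hence $\alpha<0$, which then makes $D''\cdot B=\alpha(A\cdot B)+\beta(E\cdot B)<0$, contradicting $D''\cdot B>0$; the argument for $\alpha\ge 0$ is symmetric. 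Therefore $[D]=h[A]+\ell[E]$ with $h=n+\alpha\ge 0$ and $\ell=m+\beta\ge 0$, and conversely any class $h[A]+\ell[E]$ with $h,\ell\in\QQ_{\ge 0}$ is $\QQ$-effective by clearing denominators.

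Substituting $[E]=w[A]+z[B]$ into $[D]=h[A]+\ell[E]$ gives $a=h+\ell w$ and $b=\ell z$ for $[D]=a[A]+b[B]$; since $z>0$ we have $\ell=b/z$ and $h=(az-bw)/z$, so $h,\ell\ge 0$ is equivalent to $b\ge 0$ and $az-bw\ge 0$, as claimed. The main obstacle is the sign analysis in the residual step: one must engage all three inequalities coming from intersection with $A$, $E$, and the ample class $B$, together with the Hodge-index input $A\cdot E>0$, to rule out the spurious quadrants and conclude non-negativity of both $(A,E)$-coordinates.
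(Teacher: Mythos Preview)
Your proof is correct and proceeds along a genuinely different route from the paper's argument.

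The paper argues via \emph{rigidity of linear systems}: since $A$ and $E$ are integral with negative self-intersection, $|\Oo_S(kA)|=\{kA\}$ and $|\Oo_S(kE)|=\{kE\}$ for every $k>0$. From this it deduces directly that $h^0(\Oo_S(nA+mE))=0$ whenever $n<0$ or $m<0$ (if, say, $m<0$ and $D\in|nA+mE|$, then $D+(-m)E\in|nA|=\{nA\}$, forcing $E$ to appear in $nA$, a contradiction). So an integral curve $Y$ written as $h[A]+\ell[E]$ must have $h,\ell\ge 0$. The fact $z>0$ is then read off by applying the cone description to the effective class $B$ itself. No Hodge index, no intersection with $B$ beyond the very last step.

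You instead work purely with intersection numbers: Hodge index gives you the strict inequality $A\cdot E>0$, and then for any effective $D$ you strip off its $A$- and $E$-components and pin down the signs of the residual $(A,E)$-coordinates by pairing against $A$, $E$, and the ample $B$. This is closer in spirit to standard cone-of-curves arguments and is arguably more portable, since it never uses the special feature that $\mathrm{Pic}(S)$ is torsion-free of rank two except through the numerical consequences. The paper's approach, by contrast, is more elementary in that it avoids invoking Hodge index altogether and stays at the level of linear systems, but it leans on the rigidity statement $|kA|=\{kA\}$, which is specific to integral curves of negative self-intersection.

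Both reach the same conclusion with comparable effort; the main trade-off is Hodge index versus linear-system rigidity as the engine driving the sign constraints.
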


\begin{proof}
Since $A$ and $E$ are integral curves on a surface, with $A^2 < 0$ and $E^2<0$, one has that the linear systems of their multiples satisfy $|\Oo _S(kA)| =\{kA\}$, $|\Oo _S(kE)| =\{kE\}$ for all positive integers $k$. Note that $|\Oo _S(-kA)| =|\Oo _S(-kE)| =\emptyset$ for $k>0$. Since $A$ and $E$ are irreducible and $A\ne E$, one has $A\cdot E\ge 0$. Since $A^2<0$, the preceding sentences imply that $rA \nsim sE$ for all non-zero rational
numbers, i.e. no rational multiples of theirs are linearly equivalent. Since $\mathrm{Pic}(S)\otimes \QQ$ is a two-dimensional $\QQ$-vector space, the classes $[A]$ and $[E]$ form a basis of it. 

To show that the effective cone in $\mathrm{Pic}(S)\otimes \QQ$ coincides with the set $h[A]+\ell[E]$ with $h,\ell \in \QQ_{\ge 0}$, it is sufficient to prove that for
any integral curve $Y$ the unique $h, \ell\in \QQ$ such that $[Y] = h[A]+\ell[E]$ are non-negative. Clearing 
denominators, it is sufficient to prove that if $n, m, u$ are integers with $u>0$ and $uY\in |\Oo _S(nA+mE)|$,
then $n\ge 0$ and $m\ge 0$. Since $|\Oo _S(nA)| =\{nA\}$ for all $n>0$, $|\Oo _S(nE)| =\{nE\}$ for all $n>0$ and $rA\nsim sE$ for any $r, s\in \QQ$,
it follows that $h^0(\Oo _S(nA-E)) =0$ for all $n>0$ and $h^0(\Oo _S(nE-A)) =0$ for all $n>0$.  Thus $ |\Oo _S(nA+mE)|=\emptyset$ if either $n<0$ or $m<0$. 

Now, let $Y$ be an effective divisor and write $[Y] = a[A]+ b[B]$ with $a,b\in \QQ$. By assumption, $[E] =w[A]+z[B]$ with $w, z\in \QQ$.
As shown above, the classes $[A]$ and $[E]$ are not proportional over $\QQ$ up to linear equivalence, so $z\ne 0$. Thus 
\[
[B] =-\frac{w}{z}[A] +\frac{1}{z}[E].
\]
Writing $[Y] = h[A] + k[E]$, we obtain $h = a - \frac{bw}{z}$ and $k = b/z$. Since $B$ is effective, by the first part of the proof, we cannot have $z<0$, and so $z>0$. 
Thus $Y$ is effective if and only if $b\ge 0$ and $az-bw\ge 0$. 
\end{proof}

Recall that the constructible set of all $\lbrace f = 0 \rbrace = Y\in |\Oo_{\PP^n}(d)|$ with strength $\mathrm{s}(f)=2$ is denoted $\mathcal S_2(d,n)$. 

\begin{theorem}\label{d31}
For all $n\ge 3$ and $d\ge 4$, the constructible set $\mathcal S_2(d,n)$ is reducible.
\end{theorem}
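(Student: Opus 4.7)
The plan is to exhibit two distinct irreducible subvarieties of $\overline{\mathcal{S}_2(d,n)}$ which do not contain one another. The natural candidates are $\overline{F(1,1;n)}$ (degree-$d$ hypersurfaces containing a codimension-two linear subspace) and $\overline{F(2,2;n)}$ (hypersurfaces containing a complete intersection of two quadrics). Both are irreducible by Remark \ref{ooo1}; a general element of either is an irreducible hypersurface of degree $d\ge 4$, hence cannot have strength one and so has strength exactly two. It therefore suffices to produce $f_1\in F(1,1;n)\setminus \overline{F(2,2;n)}$ and $f_2\in F(2,2;n)\setminus\overline{F(1,1;n)}$.

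I first reduce to the case $n=3$. For a generic linear embedding $\iota\colon\PP^3\hookrightarrow\PP^n$, the restriction map $r_\iota\colon H^0(\Oo_{\PP^n}(d))\to H^0(\Oo_{\PP^3}(d))$ sends $F(2,2;n)$ into $F(2,2;3)$ (a generic restriction of two quadrics forming a regular sequence remains a regular sequence on $\PP^3$), and likewise $F(1,1;n)$ into $F(1,1;3)$. Continuity of $r_\iota$ in the Zariski topology propagates these inclusions to closures. Viewing any $f_0\in F(1,1;3)\setminus\overline{F(2,2;3)}$ as a section of $\Oo_{\PP^n}(d)$ depending only on the first four coordinates, restricting along a generic $\iota$ yields a $\mathrm{GL}_4$-translate of $f_0$; by the $\mathrm{GL}_4$-invariance of $\overline{F(2,2;3)}$ this forces $f_0\notin\overline{F(2,2;n)}$. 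The symmetric direction is analogous, so it is enough to treat $n=3$.

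In the case $n=3$, I take a very general smooth surface $X_1\subset\PP^3$ of degree $d$ in $|\Ii_L(d)|$, where $L$ is a fixed line. Noether--Lefschetz (applicable since $d\ge 4$) yields $\mathrm{Pic}(X_1)=\ZZ\langle H,L\rangle$ with intersection form $H^2=d$, $H\cdot L=1$, $L^2=2-d$. If $X_1\in\overline{F(2,2;3)}$, then $X_1$ contains a curve $C$ with Hilbert polynomial $4t$, a numerical invariant preserved in every flat family. Adjunction with $K_{X_1}=(d-4)H|_{X_1}$ then forces $[C]=aH+bL$ to satisfy $ad+b=4$ and, after substitution into $C^2=16-4d$,
\[
(d-1)^2\,a\,(ad-8)=4(4-3d).
\]
A case check on $|a|\le 2$ together with growth estimates for $|a|\ge 3$ rules out integer solutions for $d\ge 4$, contradicting the existence of $C$ and giving $X_1\notin\overline{F(2,2;3)}$. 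A symmetric Picard computation on a very general smooth $X_2\in|\Ii_{C_0}(d)|$, with $C_0$ a fixed elliptic quartic complete intersection, produces the divisibility constraint $ad+4b=1$ together with a self-intersection identity in $\ZZ\langle H,C_0\rangle$ having no integer solutions for $d\ge 4$. Proposition \ref{u100}, applied with $B=H$ and $A$ equal to the prescribed negative-self-intersection curve, supplies the effective-cone description that confirms classes with the offending numerical invariants cannot be effective, even in degenerations.

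The main obstacle is twofold. First, Noether--Lefschetz must be invoked in the ``with prescribed component'' form to guarantee that the very general member of $|\Ii_L(d)|$ (respectively $|\Ii_{C_0}(d)|$) has Picard rank exactly two; this is classical but needs precise citation. Second, one must verify that the Diophantine obstruction genuinely rules out every flat degeneration of an elliptic quartic complete intersection (respectively of a line), including the non-reduced and reducible limits exhibited in Theorems \ref{cor1} and \ref{cor2}. This is resolved by noting that the Hilbert polynomial, and hence the class in $\mathrm{Pic}(X_i)$ of any such degeneration contained in the smooth surface $X_i$, is preserved in flat families, so the numerical obstruction applies uniformly.
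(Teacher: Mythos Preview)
Your overall strategy---exhibiting two irreducible loci in $\overline{\mathcal S_2(d,n)}$ with neither contained in the other---is legitimate, and the direction $F(2,2;3)\nsubseteq \overline{F(1,1;3)}$ is essentially parallel to the paper's argument (with an elliptic quartic replacing the conic). The reduction to $n=3$ via generic linear sections is also a reasonable alternative to the paper's cone argument.

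The genuine gap is in the direction $F(1,1;3)\nsubseteq \overline{F(2,2;3)}$. Here you must work with the \emph{closure} of $F(2,2;3)$, and as Theorems~\ref{cor1} and~\ref{cor2} show, flat limits of elliptic quartic complete intersections can acquire embedded points. If $C\subset X_1$ is such a limit with embedded part of length $\ell>0$, then only its divisorial part $D$ defines a class in $\mathrm{Pic}(X_1)$; one has $\deg D=4$ but $p_a(D)=1+\ell$, so adjunction gives $D^2=16-4d+2\ell$, not $16-4d$. Your claim that ``the Hilbert polynomial, and hence the class in $\mathrm{Pic}(X_i)$\dots is preserved'' is therefore incorrect, and the Diophantine analysis must be redone with the free parameter $\ell\ge 0$. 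Concretely, for $d=4$ the class $D=H$ (a plane section) with $\ell=2$ satisfies all your numerical constraints, and ruling it out requires knowing exactly which schemes ``plane quartic $+$ two embedded points'' arise as limits of elliptic quartic complete intersections---a delicate question not addressed. A second problem: your appeal to Proposition~\ref{u100} for $X_1$ fails, since the natural residual curve $E=H-L$ satisfies $(H-L)^2=d-2+(2-d)=0$, so the hypothesis $E^2<0$ is not met.

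The paper sidesteps both difficulties by comparing $F(1,2;n)$ with $F(1,1;n)$ and proving only the single non-containment $F(1,2;3)\nsubseteq F(1,1;3)$. Since $F(1,1;3)$ is already closed (Remark~\ref{a1a1}), no degenerations need be analysed; since the test curve is a line, no embedded-point subtleties arise; and for a surface through a conic $D$ one has $(H-D)^2=2-d<0$, so Proposition~\ref{u100} applies cleanly.
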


\begin{proof}

We show that $F(1,2;n) \nsubseteq \overline{F(1,1;n)}=F(1,1;n)$; recall that they are irreducible by Remark \ref{ooo1}. 
We prove that the general point of $F(1,2;n)$ is not in $F(1,1;n)$ and denote this assertion $(\ast)_n$. \\

\begin{quote}
\noindent {\it Claim}. If $(\ast)_3$ is true, then $(\ast)_n$ is true. \\

\noindent {\it Proof of the claim}. We do induction on $n$. Thus we assume $n>3$ and that $(\ast)_{n-1}$ is true.
Fix a hyperplane $H\subset \PP^n$. We will write $F(1,2;H)$ and $F(1,1;H)$ instead of $F(1,2;n-1)$ and $F(1,1;n-1)$, because we will see hypersurfaces of $\PP^{n-1}$ as hypersurfaces of $H$. Fix $o\in \PP^n\setminus H$. For any $Y\in F(1,2;n)$ we fix an integral $T_Y\subset Y$ with $T_Y\subset \PP^n$ complete intersection of a hyperplane and a quadric hypersurface. There is a non-empty Zariski open subset $U$ of $F(1,2;n)$ such that $\dim Y\cap H =n-2$ and $\dim T_Y\cap H =n-3$. Note that for each $Y\in U$, we have $Y\cap H\in F(1,2;H)$. Thus this defines a morphism $\phi: U\to F(1,2;H)$. Fix a general $G\in F(1,2;H)$ and let $C\subset G$ be a complete intersection of a hyperplane in $H$ and a quadric hypersurface in $H$. For a general $G\in F(1,2;n)$, $G$ and $C$ are irreducible. Let $C_o(G)\subset \PP^n$ (resp. $C_o(C)$) be the cone with vertex $o$ over $G$ (resp. $C$). Since $G$ and $C$ are irreducible, $C_o(G)$ and $C_o(C)$ are irreducible. Since $C_o(G)\cap H =G$ and $C_o(C)\cap H = C$, we have $\phi(C_o(G)) = G$ and so $\phi$ is dominant. 

Assume that $(\ast)_n$ fails and take a general $F\in U$. By assumption, $F\supset L$ with $L$ a linear space of dimension $n-2$.
Then $L\cap H$ is a linear space of dimension at least $n-3$. Thus $F\cap H\in F(1,1;H)$. Since $\phi$ is dominant, $(\ast)_{n-1}$ fails, contradicting our assumption. 
\end{quote}

Thus from now on we assume $n=3$. Note that $E(1,1;3)$ is the union of $F(1,1;3)$ and the set of all reducible surfaces containing a plane. 

We show that $F(1,2; 3)\nsubseteq \overline{F(1,1;3)}$, concluding the proof that $\mathcal S_2(d,3)$ and hence $\mathcal S_2(d,n)$ is reducible. Let $D$ be a smooth conic, say $D =\{\ell = q=0\}$ with $\ell$ a linear form and $q$ a quadratic form. The Koszul complex of
the regular sequence shows that $S\in |\Ii _D(d)|$ if and only if $S$ has an equation $f = \ell f_1 +q f_2$ with $\deg (f_1)=d-1$
and $\deg (f_2)=d-2$. By \cite{lop2}, a general $S\in |\Ii _D(d)|$ has Picard group freely generated by
$D$ and $\Oo _S(1)$. We check that $S$ contains no line and hence $S\notin E(1,1;3)$. 

Suppose $S$ contains a line $L$ and write $\Oo _S(L) \cong \Oo _S(b)(aD)$ with $(a,b)\in \ZZ^2$. Since $\deg (D) =2$, $\Oo _S(1)\cdot \Oo _S(1) =d$ and $\deg (L) =1$, we get the integral equation: 
\begin{equation}\label{eqb1}
bd +2a =1. 
\end{equation}
To derive a contradiction using \eqref{eqb1}, it is sufficient to prove that $b \ge 0$ and $b \ge -a$. Since $D$ is a smooth conic, we have $\omega _D \cong \Oo _D(-1)$ and $\deg (\omega _D)=-2$. As $S\subset \PP^3$ is a degree $d$ surface, the adjunction formula gives $\omega _S \cong \Oo _S(d-4)$. Now, $D$ is a curve on the smooth surface $S$, and so adjunction gives $\deg (\Oo _D(d-4)) +D^2 =-2$, i.e. $D^2 = 6-2d$. We have $D^2<0$, because $d\ge 4$. Moreover, a Bertini type argument shows that a general $E\in |\Oo _S(1)(-D)|$ is smooth and integral.  

Let $A = D$ and $B = \Oo _S(1)$ be the ones in the assumptions of Proposition \ref{u100}. 
Since a general $E\in |\Oo _S(1)(-D)|$ is a smooth plane curve of degree $d-2$, the adjunction formula gives $\omega _E \cong \Oo _E(d-5)$ and hence
$\deg (\omega _E) = (d-2)(d-5)$. By the adjunction formula on $S$ we have
$\deg (\omega _E) =\deg (\Oo _E(d-4)) + E^2$ and hence $E^2<0$. By definition, 
$D+E\in |\Oo_S(1)|$, i.e. in the setting of Proposition \ref{u100}, one has $w=-1$ and $z= 1$.  Since $E$ is effective, by the conclusion of Proposition \ref{u100}, we obtain $a\ge 0$ and $b\ge - a$.  \end{proof}

\begin{remark}\label{NLlocus}
The {\it Noether-Lefschetz locus} $\mathcal{NL}_d\subset |\Oo_{\PP^3}(d)|$ is the set of all smooth degree $d$ surfaces $S$ such that  $\mathrm{Pic}(S) \neq \ZZ\Oo _S(1)$. This locus is a countable union of algebraic sets and a classical result of Noether affirms that $\mathcal{NL}_d$ does not coincide with 
the set parametrizing all degree $d$ smooth surfaces $U\subset |\Oo_{\PP^3}(d)|$.  

A smooth $S\in |\Oo_{\PP^3}(d)|$ is in $E(1,1;3)$ if and only if it contains a line. The set of all smooth $S\in |\Oo_{\PP^3}(d)|$ containing a line is a codimension $d-3$ component of $\mathcal{NL}_d\subset |\Oo_{\PP^3}(d)|$ \cite[Theorem 1]{green}; see also \cite{voisin,xu}.  Moreover, the set of all smooth $S\in |\Oo_{\PP^3}(d)|$ containing a complete intersection curve different from a line has higher codimension: for $d\geq 5$, the next largest component of $\mathcal{NL}_d\subset |\Oo_{\PP^3}(d)|$ consists of smooth surfaces containing a conic, which has codimension $2d-7$ \cite[Th\'{e}or\`{e}me 0.2]{voisin}. 

Notice that the set of all smooth $Y\in F(i,i;3)$, for $i=1,\ldots, \lfloor d/2\rfloor$ of degree $d$ is contained in $\mathcal{NL}_d$. Indeed, by definition any such $Y$ 
contains a complete intersection curve $Z\subset Y$ whose canonical bundle is $\omega_Z\cong \Oo_{Z}(2i-4)$. If $Y$ were in the complement of $\mathcal{NL}_d$,
one would have $\mathrm{Pic}(Y)\cong \ZZ\Oo_Y(1)$. Therefore, any curve $C\subset Y$ would be a complete intersection between $Y$ and another surface $Y'$ of some degree $d'$: however, $Z\subset Y$ cannot be realized as an intersection $Y\cap Y'$, as $\omega_{Y\cap Y'}\cong \Oo_{Y\cap Y'}(d+d'-4)$ and $2i<d+d'$. 
\end{remark}

We conclude with the following example proving it is possible to have $\mathrm{s}(u) < \mathrm{s}^R(u)$ for all line bundles $R$ in Definition \ref{mstrength}: 

\begin{example}\label{newd1}
Let $d\geq 5$. There exists $\lbrace u = 0\rbrace = S\in F(1,2;3)$ (smooth with strength $\mathrm{s}(u)= 2$)
which is neither in $F(i;3)$ for any $i$ (because $u$ can be chosen irreducible) nor in $F(i,i;3)$, for any $i=1,\dots ,\lfloor d/2\rfloor$. Indeed, for $i=1$ this is the content Theorem \ref{d31}; for $i>1$, by Remark \ref{NLlocus}, the smooth elements of $F(1,2;3)$ are in the unique submaximal component of the Noether-Lefschetz locus $\mathcal{NL}_d$, whereas the smooth elements of $F(i,i;3)$ sits in higher codimensions. So $u$ has $R$-strength $>2$ for all line bundles $R$ on $\PP^3$.
\end{example}

\end{document}